\patchcmd{\section}{\scshape}{\bfseries}{}{}
\renewcommand{\@secnumfont}{\bfseries}
\theoremstyle{plain}
\DeclareMathOperator{\id}{\textrm{id}}
\newcommand{\Aff}{\mathbb{A}}
\newcommand{\FF}{\mathbb{F}}    
\newcommand{\NN}{\mathbb{N}} 
\newcommand{\ZZ}{\mathbb{Z}} 
\newcommand{\Vect}{\operatorname{Vect}}  
\newcommand{\Rep}{\operatorname{Rep}}
\newcommand{\nil}{\operatorname{nil}}  
\newcommand{\iso}{\operatorname{iso}}
\newcommand{\Hom}{\operatorname{Hom}} 
\newcommand{\supp}{\operatorname{supp}} 
\newcommand{\End}{\operatorname{End}}
\newcommand{\Fun}{\operatorname{Fun}}
\newcommand{\Obj}{\operatorname{Obj}}    
\newcommand{\Symm}{\operatorname{Symm}}
\newcommand{\C}{\mathcal{C}}   
\newcommand{\E}{\mathcal{E}}    
\newcommand{\mcO}{\mathcal{O}}  
\newcommand{\mcI}{\mathcal{I}}  
\newcommand{\mfM}{\mathfrak{M}} 
\newcommand{\mfI}{\mathfrak{I}} 
\newcommand{\mfE}{\mathfrak{E}}
\newcommand{\M}{\operatorname{M}} 
\newcommand{\MM}{\operatorname{SM}}
\newcommand{\ab}{\operatorname{ab}} 
\newcommand{\mon}{\operatorname{mon}}
\theoremstyle{definition}
\newtheorem{mydef}{\textbf{Definition}}[section]
\newtheorem{myeg}[mydef]{\textbf{Example}}
\newtheorem{rmk}[mydef]{\textbf{Remark}}
\newtheorem{construction}[mydef]{\textbf{Construction}}
\theoremstyle{plain}
\newtheorem*{nothma}{\textbf{Theorem A}}
\newtheorem*{nothmb}{\textbf{Theorem B}}
\newtheorem{mythm}[mydef]{\textbf{Theorem}}
\newtheorem{lem}[mydef]{\textbf{Lemma}}
\newtheorem{pro}[mydef]{\textbf{Proposition}}
\newtheorem{cor}[mydef]{\textbf{Corollary}}
\newtheorem*{convention}{\textbf{Convention}}
\tikzset{main node/.style={circle,fill=black,draw,minimum size=0.3cm,inner sep=0pt},
}
\begin{document}

	\title{On Semisimple Proto-Abelian Categories Associated to Inverse Monoids}  

	\author{Alexander Sistko}
	\address{SUNY New Paltz}
	\curraddr{}
	\email{sistkoa@newpaltz.edu}
	
	\makeatletter
	\@namedef{subjclassname@2020}{%
		\textup{2020} Mathematics Subject Classification}
	\makeatother
	
	\subjclass[2020]{Primary 20M30; Secondary 20M15, 20M18, 20M50, 18A25, 18B40, 18E99, 05E10, 14A23}
	\keywords{monoid, inverse monoid, semigroup, pointed group, representations of monoids, the field with one element, proto-exact category, proto-abelian category, functor category, semisimple category}

	
	\dedicatory{}

	\maketitle
	
	
\begin{abstract} 
Let $G$ be a finite abelian group written multiplicatively, with $\hat{G} = G\sqcup \{0\}$ the pointed abelian group formed by adjoining an absorbing element $0$. There is an associated finitary, proto-abelian category $\Vect_{\hat{G}}$, whose objects can be thought of as finite-dimensional vector spaces over $\hat{G}$. The class of $\hat{G}$-linear monoids are then defined in terms of this category. In this paper, we study the finitary, proto-abelian category $\operatorname{Rep}(M,\hat{G})$ of finite-dimensional $\hat{G}$-linear representations of a $\hat{G}$-linear monoid $M$. Although this category is only a slight modification of the usual category of $M$-modules, it exhibits significantly different behavior for interesting classes of monoids. Assuming that the regular principal factors of $M$ are objects of $\operatorname{Rep}(M,\hat{G})$, we develop a version of the Clifford-Munn-Ponizovski\u i Theorem and classify the $M$ for which each non-zero object of $\operatorname{Rep}(M,\hat{G})$ is a direct sum of simple objects. When $M$ is the endomorphism monoid of an object in $\Vect_{\hat{G}}$, we discuss alternate frameworks for studying its $\hat{G}$-linear representations and contrast the various approaches.  
\end{abstract}  

	
\section{Introduction}  


This article represents a synthesis of the theories of finite semigroups and mathematics over the field with one element. Some cross-pollination between these fields of study already exists: for instance, researchers working in Hall algebras of finitary proto-exact categories have used actions of semigroups and monoids on several occasions \cite{szczesny2014hall, szczesny2018hopf,BeersSzcz2019}, and representations of finite semigroups over semirings were studied in \cite{IzhRhoStein2011}. Furthermore, the literature on semigroups is extensive and rich, with many ideas mirroring those considered over the field with one element \cite{CliffPrest1961, Tully1964}. However, the emphasis of much recent work seems to have been on representations which arise as homomorphisms into transformation monoids, rather than those into symmetric inverse monoids. In this article, the author hopes to make the case for the distinctness and significance of the latter. Additionally, they hope that this work will draw further attention to the structure and representation theory of semigroups, which others have noted has sometimes felt insulated from other sub-disciplines of algebra \cite{GanMazStein2009,Steinberg2016}. In particular, this work represents an attempt to elucidate how important results from semigroup theory can be developed within the framework of finitary proto-abelian categories through a slight modification of the usual functorial approach. As evidence of its usefulness, this framework recovers important classes of semigroups from a natural categorical context, and characterizes them via representation theory over the field with one element.    

It should be noted, though, that the initial motivation for this article was much more humble. Originally, the author wanted to develop a version of Wedderburn-Artin Theory for representation theory over the field with one element. Recall that an Artinian ring $R$ is said to be semisimple if every $R$-module is isomorphic to a direct sum of simple representations. Wedderburn-Artin Theory asserts that $R$ is semisimple if and only if  
\[ 
R \cong \prod_{i=1}^d{M_{n_i}(\Delta_i)}
\] 
for some positive integers $n_i$ and division rings $\Delta_i$. In the important case that $R$ is a finite-dimensional algebra over a field $k$, the $\Delta_i$ are all finite-dimensional division algebras over $k$. In particular, if $k$ is finite, then Wedderburn's Little Theorem says that the $\Delta_i$'s are all field extensions of $k$ of finite degree. Thus, the semisimple algebras over finite fields are products of full matrix rings over finite fields. Of course, every field contains at least two elements: however, starting with a seminal article from Tits \cite{tits1956analogues}, some have found it fruitful to treat certain objects in mathematics as if they were defined over a fictional ``field with one element,'' here denoted $\FF_1$. As it stands, mathematics over the field with one element is a diverse area of contemporary research, see \cite{LorscheidFun} for a survey over its applications within geometry and number theory. For the present article, we focus instead on the algebraic and combinatorial aspects of the subject, whose typical objects of study are finitary, proto-exact categories which act as combinatorial analogs to categories of representations, sheaves, etc. \cite{szczesny2011representations, szczesny2012hall, szczesny2014hall, szczesny2018hopf, eppolito2017hopf, eppolito2018proto, jun2020quiver, JunSzczToll2023, JunSis2023, KleinauPreprint, JarraLorscheidVitalPreprint}. These categories are rarely additive, and often exhibit behavior quite different from their analogs defined over fields. 

Within this context, there is a category of finite-dimensional vector spaces over $\FF_1$, denoted $\Vect_{\FF_1}$. Its objects are finite pointed sets $(V,0_V)$, and its morphisms are functions $f : (V,0_V) \rightarrow (W,0_W)$ satisfying $f(0_V) = 0_W$ and $|f^{-1}(y)| \le 1$ for all $y \neq 0_W$. Thus, one can conceive of representation theory over $\FF_1$ as the study of certain categories of functors into $\Vect_{\FF_1}$. 

For each positive integer $n$, there is a unique $n$-dimensional object up to isomorphism, namely the $(n+1)$-element set
\[
\widehat{[n]} = \{0,1,\ldots , n\}.
\]
Its endomorphism monoid $I_n$ should be an $\FF_1$-analog to a full matrix ring over a field. Indeed, morphisms in $I_n$ may be naturally identified with $n\times n$ submonomial matrices, i.e. matrices with entries in $\{0,1\}$ with the property that each row and column contains at most one non-zero entry. Our choice of notation for $I_n$ comes from the semigroup literature, where it is known as the inverse symmetric monoid of partial bijections on the set $[n] = \{1,\ldots , n\}$ \cite{CliffPrest1961,Steinberg2016}. Note that our definition still yields a monoid if we replace the set $\{0,1\}$ with any monoid, insinuating that we may wish to consider more general sets of coefficients than $\FF_1$. This indeed works, and for the purposes of the present article we retain a reasonable theory if we replace $\FF_1$ with any finite pointed abelian group. This has precedence within the literature on $\FF_1$, where the field extension of $\FF_1$ of finite degree $d$ corresponds to the group of $d^{th}$ roots of unity \cite{LorscheidFun}. Similar ideas have also been considered within the semigroup literature \cite{CliffPrest1961,Tully1964}. More precisely, if $G$ is a finite abelian group written in multiplicative notation, we get a finitary, proto-abelian category $\Vect_{\hat{G}}$ of finite-dimensional vector spaces over the pointed abelian group $\hat{G} = G\sqcup \{0\}$. Note that the choice of $G = \{ 1\}$ recovers $\Vect_{\FF_1}$ directly. The endomorphism monoid of the standard $n$-dimensional object of $\Vect_{\hat{G}}$ now yields a monoid $I_n(\hat{G})$ which naturally contains $I_n = I_n(\FF_1)$ as a submonoid. The monoid $I_n(\hat{G})$ is an example of what we term a $\hat{G}$-linear monoid, which is a non-additive analog to a finite-dimensional associative algebra over a field. To any $\hat{G}$-linear monoid $M$, we now define a finitary, proto-abelian category $\Rep(M,\hat{G})$ of $\hat{G}$-linear representations. Objects in this category are certain functors $M \rightarrow \Vect_{\hat{G}}$, treating $M$ as a category with one object. 

It is now natural to ask what $\Rep(M,\hat{G})$ looks like for various $M$. In analogy to Wedderburn-Artin Theory: for which $M$ is $\Rep(M,\hat{G})$ semisimple, in the sense that all non-zero objects in $\Rep(M,\hat{G})$ are direct sums of simple objects? In the case of Artinian rings, allowing the ring to act on itself via left multiplication serves as a crucial step towards classification. Unfortunately, left multiplication of $M$ on itself usually does not yield an object in $\Rep(M,\hat{G})$. A more appropriate condition is to require that the regular principal factors of $M$ be $\hat{G}$-linear representations of $M$ under left translation: we call such monoids left-inductive, with right-inductive monoids being defined similarly. For this class of monoids, there is a bijection between simple $\hat{G}$-linear representations of $M$ and simple $\hat{G}$-linear representations of its maximal subgroups. More precisely, if $J$ is a regular $\mathcal{J}$-class containing the idempotent $e$, and $G_J$ is the maximal subgroup $eMe\cap J$, we obtain an induction functor $(-){\uparrow_e} : \Rep(\hat{G}_J,\hat{G}) \rightarrow \Rep(M,\hat{G})$ and restriction functor $(-){\downarrow_e} : \Rep(M,\hat{G}) \rightarrow \Rep(\hat{G}_J,\hat{G})$ which satisfy the following Clifford-Munn-Ponizovski\u i Theorem: 

\begin{nothma}[Theorem \ref{t.CMP}] 
Let $M$ be a finite, $\hat{G}$-linear, left inductive monoid. Fix a non-zero idempotent $e \in M$, and let $J = \mathcal{J}_e$ be its associated regular $\mathcal{J}$-class. Then the following hold: 
\begin{enumerate}  
\item If $V \in \Rep(\hat{G}_J,\hat{G})$ is simple, then  
\[ 
N(V{\uparrow_e}) := \{x\otimes w \in V{\uparrow_e} \mid eM(x\otimes w) = 0 \} 
\] 
is the unique maximal submodule of $V{\uparrow_e}$. Furthermore, the simple representation $Q(V) = V{\uparrow_e}/N(V{\uparrow_e})$ has apex $J$ and $Q(V){\downarrow_e} \cong V$ in $\Rep(\hat{G}_J,\hat{G})$.  
\item If $V \in \Rep(M,\hat{G})$ is simple with apex $J$, then $W= V{\downarrow_e}$ is simple in $\Rep(\hat{G}_J,\hat{G})$. Furthermore,
\[ 
W{\uparrow_e}/N(W{\uparrow_e}) \cong V
\] 
in $\Rep(M,\hat{G})$.
\end{enumerate} 
In particular, there is a bijection between simple objects in $\Rep(M,\hat{G})$ with apex $J$ and simple objects in $\Rep(\hat{G}_J,\hat{G})$.
\end{nothma}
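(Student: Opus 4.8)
The plan is to establish the two functorial statements (1) and (2) separately and then read off the bijection by verifying that the assignments $V \mapsto V{\downarrow_e}$ and $W \mapsto Q(W) = W{\uparrow_e}/N(W{\uparrow_e})$ are mutually inverse. Throughout, the engine will be a Frobenius-type adjunction between induction and restriction, i.e. a natural isomorphism $\Hom_M(V{\uparrow_e}, U) \cong \Hom_{\hat{G}_J}(V, U{\downarrow_e})$, which I expect to have been recorded when the two functors were constructed, together with a single computational input: the unit $V \to (V{\uparrow_e}){\downarrow_e}$ is an isomorphism. Concretely, writing a typical element of $V{\uparrow_e}$ as $x\otimes w$ with $x \in Me$, one has $e(V{\uparrow_e}) \cong V$, and since $x = xe$ gives $x\otimes w = x\cdot(e\otimes w)$, the induced object is generated as an $M$-module by its $e$-part $e(V{\uparrow_e})$. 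These two facts, the restriction identity and the generation property, are what make the whole argument go, and they are exactly where the left-inductive hypothesis enters, since it is that hypothesis which guarantees $V{\uparrow_e}$ is a genuine object of $\Rep(M,\hat{G})$.

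For part (1), I would first check that $N(V{\uparrow_e})$ is an admissible subobject: it is closed under the $M$-action because $eM(mu) = (eMm)u \subseteq (eM)u$, and closure under the remaining proto-abelian structure is routine. It is proper because $e(V{\uparrow_e}) \cong V \neq 0$ and $e \in eM$ give $eM(V{\uparrow_e}) \neq 0$. For maximality, let $U \subsetneq V{\uparrow_e}$ be a proper subobject and consider $eU \subseteq e(V{\uparrow_e}) \cong V$; simplicity of $V$ over $\hat{G}_J$ forces $eU = 0$ or $eU = e(V{\uparrow_e})$. The latter is impossible, since $e(V{\uparrow_e}) = eU \subseteq U$ would give $V{\uparrow_e} = M\cdot e(V{\uparrow_e}) \subseteq U$ by the generation property. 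Hence $eU = 0$, so $eMu \subseteq eU = 0$ for $u \in U$ and $U \subseteq N(V{\uparrow_e})$; this shows $N(V{\uparrow_e})$ is the unique maximal subobject and $Q(V)$ is simple. Taking $U = N(V{\uparrow_e})$ in the same dichotomy yields $eN(V{\uparrow_e}) = 0$, whence $Q(V){\downarrow_e} = e(V{\uparrow_e})/eN(V{\uparrow_e}) \cong V$; in particular $eQ(V) \neq 0$, which together with the general theory of apexes and the minimality of action encoded in the definition of $N(V{\uparrow_e})$ identifies the apex of $Q(V)$ as exactly $J$.

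For part (2), the first task is to show $W = V{\downarrow_e} = eV$ is simple over $\hat{G}_J$. Because $V$ has apex $J$, elements of $eMe$ lying in $\mathcal{J}$-classes strictly below $J$ annihilate $eV$, so $eMe$ acts on $eV$ through $\hat{G}_J = (eMe\cap J)\sqcup\{0\}$. Given a nonzero $\hat{G}_J$-subobject $W' \subseteq eV$, the $M$-submodule $MW'$ it generates in the simple $V$ is all of $V$, so $eV = eMW' = (eMe)W' \subseteq \hat{G}_J W' = W'$ (using $W' = eW'$ and that $eMe$ acts through $\hat{G}_J$), forcing $W' = eV$; thus $W$ is simple. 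Finally, the counit $\varepsilon\colon (V{\downarrow_e}){\uparrow_e} = W{\uparrow_e}\to V$ corresponds to $\id_{W}$ under the adjunction and so is nonzero; its image is then a nonzero admissible subobject of the simple $V$, hence $\varepsilon$ is an admissible epimorphism whose kernel is a maximal subobject of $W{\uparrow_e}$. By the uniqueness in part (1) this kernel must be $N(W{\uparrow_e})$, giving $W{\uparrow_e}/N(W{\uparrow_e}) \cong V$.

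The bijection is then immediate: part (1) sends a simple $V \in \Rep(\hat{G}_J,\hat{G})$ to a simple $Q(V) \in \Rep(M,\hat{G})$ of apex $J$ with $Q(V){\downarrow_e} \cong V$, while part (2) shows every simple $M$-object of apex $J$ arises as $Q(V{\downarrow_e})$ with $V{\downarrow_e}$ simple; the relations $Q(V){\downarrow_e} \cong V$ and $Q(V{\downarrow_e}) \cong V$ exhibit the two assignments as mutually inverse. I expect the main obstacle to be the bookkeeping forced by the proto-abelian (rather than abelian) setting: at each step one must verify that the subobjects and quotients in play, namely $N(V{\uparrow_e})$, the kernel and image of $\varepsilon$, and the quotient $Q(V)$, are \emph{admissible}, and that $(-){\downarrow_e}$ preserves admissible epimorphisms so that it commutes with forming $Q(V)$. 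The apex identification and the claim that lower $\mathcal{J}$-classes act trivially on $eV$ are the other delicate points, since they rely on the precise relationship between apexes and the action of the principal factor rather than on formal category theory.
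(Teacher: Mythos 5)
Most of your argument is correct and, in fact, coincides with the paper's proof: your part (1) (properness of $N(V{\uparrow_e})$, the dichotomy $eU = 0$ or $eU = e(V{\uparrow_e})$ for a subobject $U$, and generation of $V{\uparrow_e}$ by its $e$-part), your proof that $W = eV$ is simple via the observation that $eMe$ acts through $\hat{G}_J$, and the final bookkeeping for the bijection are all the paper's steps. The genuine gap is your declared engine: the Frobenius adjunction $\Hom_M(V{\uparrow_e},U)\cong\Hom_{\hat{G}_J}(V,U{\downarrow_e})$ is not recorded in the paper and is in fact \emph{false} in this setting, because induction here passes through the principal factor $P(e)$: if $ax \in I(e)$ then $a$ acts as zero on $x\otimes w \in V{\uparrow_e}$, whereas nothing forces $a$ to act as zero on the image of a would-be adjoint mate inside a general $U$. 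Concretely, take $G=\{1\}$ and $M=\{0,1,e,f,\alpha\}$ with $e,f$ orthogonal idempotents, $\alpha e = f\alpha = \alpha$, and $e\alpha = \alpha f = \alpha^2 = 0$ (the $\FF_1$-linear path monoid of a quiver with a single arrow, hence left inductive by the paper's path-monoid example). Here $J = \mathcal{J}_e = \{e\}$, $\hat{G}_J = \{0,e\}$, and $V = \{0,v\}$ is the simple object of $\Rep(\hat{G}_J,\FF_1)$. Let $U = Me = \{0,e,\alpha\}$ under left translation, an object of $\Rep(M,\FF_1)$. Then $U{\downarrow_e} = eU = \{0,e\} \cong V$, so $\Hom_{\hat{G}_J}(V,U{\downarrow_e}) \neq 0$; but $\Hom_M(V{\uparrow_e},U) = 0$, since any morphism sends $e\otimes v$ to some $u$ with $eu = u$ and $\alpha u = 0$ (the latter because $\alpha(e\otimes v) = \overline{\alpha e}\otimes v = 0$ in $V{\uparrow_e}$, as $\alpha e = \alpha \in I(e)$), and the only such $u$ is $0$. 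So there is no unit/counit formalism available, and in particular the counit $\varepsilon$ you invoke in part (2) cannot be produced this way.

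The gap is local and repairable, and the repair is exactly what the paper does: for the specific $V$ of part (2), which has apex $J$, define $\varepsilon : W{\uparrow_e}\to V$ directly by $x\otimes v \mapsto xv$ and verify by hand that it is a well-defined $M$-morphism. The verification is where the apex hypothesis enters: one has $I(e) \subseteq I_J = \operatorname{Ann}_M(V)$, so whenever $ax$ dies in the principal factor (i.e.\ $ax \in I(e)$), it also kills $V$, which is precisely the equivariance statement that fails for the $U$ above. Once $\varepsilon$ exists, your conclusion (nonzero image in the simple $V$, hence $\varepsilon$ is epi with maximal kernel, which must equal $N(W{\uparrow_e})$ by the uniqueness in part (1)) is the paper's final step. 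A smaller instance of the same oversight occurs in part (1): you attribute the identification of the apex of $Q(V)$ to ``the general theory of apexes and the minimality encoded in the definition of $N(V{\uparrow_e})$,'' but the definition of $N$ is not what does the work; the correct mechanism is again the principal-factor structure, namely that each $x \in I_J$ satisfies $xP(e)e = 0$ in $P(e)$ and hence annihilates all of $V{\uparrow_e}$, giving $I_J \subseteq \operatorname{Ann}_M(Q(V))$, while $eQ(V)\neq 0$ gives the reverse containment.
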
 

This theorem is an addition to a line of similar results that exist for representations of finite semigroups over commutative rings \cite{GanMazStein2009} and commutative semirings \cite{IzhRhoStein2011}.


It turns out that this class of monoids is well-behaved enough to admit a characterization in terms of semisimplicity. The main theorem we prove in this work is the following: 

\begin{nothmb}[Theorem \ref{t.main}] 
Let $M$ be a finite, $\hat{G}$-linear, left inductive monoid. Then the following are equivalent: 
\begin{enumerate} 
\item $\Rep(M,\hat{G})$ is semisimple. 
\item For all nilpotent ideals $J$ of $M$ and $V \in \Rep(M,\hat{G})$ we have $JV = 0$. 
\end{enumerate}  
In particular, if $M$ is also regular then $\Rep(M,\hat{G})$ is semisimple.
\end{nothmb}

The result above immediately implies that $\Rep(I_n(\hat{G}),\hat{G})$ is semisimple. In contrast, note that the finitary, proto-exact category of finite pointed $I_n(\hat{G})$-acts contains non-zero objects which are not direct sums of simples. Thus, we see that $\Rep(I_n(\hat{G}),\hat{G})$ retains more features of the representation theory of matrix rings than the category considered in \cite{szczesny2014hall}, while still exhibiting non-trivial differences with the classical theory over fields. To gain a better understanding of the relationship between the classical and non-additive theories, we next consider various means to endow $I_n(\hat{G})$ with extra structure. Treating $I_n(\hat{G})$ as a monoid endowed with a partial order, we recover a finitary proto-abelian category of representations which is simple (i.e. is semisimple and has only one simple object, up to isomorphism). Furthermore, scalar extensions of this algebra may naturally be interpreted as representations over a full matrix algebra. Treating $I_n(\hat{G})$ via the theory of affine schemes and bialgebras over the group algebra $\ZZ[G]$, we attain a category of representations which is not simple, but appears more restrictive than $\Rep(I_n(\hat{G}),\hat{G})$ itself. Treating $I_n$ as a functor of finite pointed abelian groups yields a similar result. The results of this section are not meant to provide an exhaustive treatment, but rather to build a starting point for future explorations. In particular, for the latter two approaches we only prove that each category of representations contains infinitely-many non-isomorphic objects and at least two simple objects. 

This paper is organized in follows: in Section \ref{s.cat}, we discuss the basics of finitary proto-abelian categories; in Section \ref{s.GLin}, we develop the theory of $\hat{G}$-linear monoids and their representation categories; in Section \ref{s.SS}, we prove semisimplicity results for $\hat{G}$-linear representations of $\hat{G}$-linear monoids; and in Section \ref{s.Alt} we discuss alternate formulations for the study of $\hat{G}$-linear representations of $I_n(\hat{G})$. Since this article draws on results from essentially independent communities of researchers, some results will probably be obvious to one community and less-known to the other. This applies equally well to the author himself. To the best of their abilities, the author has made an effort to cite the originators of known results: in cases where the author is unsure of who is responsible, they have indicated what the result seems most similar to. 
 
 \bigskip 

\textbf{Declaration of Funding}\hspace{0.1cm} The author received no outside funding for the present work.  

\bigskip 

\textbf{Acknowledgments}\hspace{0.1cm} The author wishes to thank the anonymous referee for several helpful suggestions. 

\section{Finitary Proto-Abelian Categories}\label{s.cat}

We recall the basics of proto-exact and proto-abelian categories, first defined in \cite{dyckerhoff2012higher}. We follow the treatment of Szczesny, see for instance \cite{JunSzczToll2023}. 

\begin{rmk}
All categories $\C$ considered in this article are assumed to be small. The set of morphisms $X \rightarrow Y$ in $\C$ will be denoted as $\C(X,Y)$, $\Hom_{\C}(X,Y)$, or $\Hom_{\hat{G}}(X,Y)$ in the case that $\C = \Vect_{\hat{G}}$ (defined below). The set of endomorphisms $\C(X,X)$ will alternatively be denoted $\End_{\C}(X)$, $\End_{\hat{G}}(X)$, etc. 
\end{rmk}

\begin{mydef} 
Let $\C$ be a category with zero object $0$, equipped with two classes of morphisms $\mfM$ and $\mfE$. We say that $\C$ is a \emph{proto-exact category} with \emph{admissible monomorphisms} $\mfM$ and \emph{admissible epimorphisms} $\mfE$ if the following conditions hold: 
\begin{enumerate} 
\item For each object $X$ of $\C$, $0\rightarrow X$ is in $\mfM$ and $X\rightarrow 0$ is in $\mfE$.
\item $\mfM$ and $\mfE$ contain all isomorphisms and are closed under composition. 
\item The commutative square: 
\begin{equation}\label{eq.biC}
\begin{tikzcd} 
X \arrow[hook]{r}{i} \arrow[swap, two heads]{d}{p} & Y \arrow[two heads]{d}{p'} \\ 
X' \arrow[swap, hook]{r}{i'} & Y'
\end{tikzcd} 
\end{equation} 
with $i,i' \in \mfM$ and $p,p'\in \mfE$ is Cartesian (a pullback) if and only if it is co-Cartesian (a pushout). We say that such a diagram is bi-Cartesian.
\item A diagram in $\C$ of the form 
\begin{equation} 
\begin{tikzcd} 
 & Y \arrow[two heads]{d}{p'} \\ 
X' \arrow[swap, hook]{r}{i'} & Y'
\end{tikzcd} 
\end{equation} 
with $i' \in \mfM$ and $p' \in \mfE$ can be completed to a bi-Cartesian square as in \ref{eq.biC} with $i \in \mfM$ and $p \in \mfE$. 
\item A diagram in $\C$ of the form 
\begin{equation}
\begin{tikzcd} 
X \arrow[hook]{r}{i} \arrow[swap, two heads]{d}{p} & Y  \\ 
X'  &
\end{tikzcd} 
\end{equation}  
with $i \in \mfM$ and $p \in \mfE$ can be completed to a bi-Cartesian square as in \ref{eq.biC} with $i' \in \mfM$ and $p' \in \mfE$.
\end{enumerate}  
If $\mfM$ is the class of all monomorphisms in $\C$ and $\mfE$ is the class of all epimorphisms in $\C$, we say that $\C$ is \emph{proto-abelian}.
\end{mydef}  

\noindent See \cite{dyckerhoff2012higher,szczesny2014hall, jun2020toric, JunSzczToll2023} for examples of proto-exact and proto-abelian categories similar to those considered in the present work. For additional treatments, see \cite{Andre2009,ChenHNCat2010}.  \newline

Let $(\C,\mfM,\mfE)$ be a proto-exact category. An \emph{admissible short exact sequence} in $\C$ is a bi-Cartesian square of the form  
\begin{equation}
\begin{tikzcd} 
A \arrow[hook]{r}{i} \arrow[swap, two heads]{d}{} & B \arrow[two heads]{d}{p} \\ 
0 \arrow[swap, hook]{r}{} & C
\end{tikzcd}  
\end{equation}  
with $i \in \mfM$ and $p \in \mfE$. If the maps $i$ and $p$ are understood, we can write this succinctly as $A \hookrightarrow B \twoheadrightarrow C$. Two admissible short exact sequences $A\hookrightarrow B \twoheadrightarrow C$ and $A \hookrightarrow B'\twoheadrightarrow C$ are \emph{equivalent} if there exists a commutative diagram 
\begin{equation}
\begin{tikzcd} 
A \arrow[hook]{r}{i} \arrow[swap]{d}{\id} & B \arrow[two heads]{r}{p} \arrow[swap]{d}{\cong} & C \arrow[]{d}{\id} \\ 
A \arrow[swap, hook]{r}{i'} & B' \arrow[swap, two heads]{r}{p'}  & C \\
\end{tikzcd}   
\end{equation}  
The set of all equivalence classes of such sequences is denoted $\operatorname{Ext}_{\C}(C,A).$ 

\begin{mydef} 
A proto-exact category $(\C,\mfM,\mfE)$ is \emph{finitary} if $\Hom_{\C}(A,B)$ and $\operatorname{Ext}_{\C}(A,B)$ are finite sets for all objects $A$ and $B$ of $\C$.
\end{mydef}   

\begin{mydef} 
$\Vect_{\FF_1}$ is the category whose objects are finite pointed sets $(V,0_V)$, and whose morphisms $f : (V,0_V)\rightarrow (W,0_W)$ are functions $f : V\rightarrow W$ satisfying $f(0_V) = 0_W$ and $|f^{-1}(w)| \le 1$ for all $w \in W\setminus\{0_W\}$. Objects in $\Vect_{\FF_1}$ are called (finite-dimensional) vector spaces over the field with one element $\FF_1$. Morphisms in $\Vect_{\FF_1}$ are called $\FF_1$-linear maps. 
\end{mydef}  

In this article, it will often be necessary to take an object and ``make it pointed'' by adjoining a zero or absorbing element. We try to stick to the convention that a caret indicates adding a zero element, e.x. to distinguish an abelian group $G$ from the pointed abelian group $\hat{G}$ with $\hat{G}\setminus\{0\} = G$. Thus, we make the following simple (but important) definition.  

\begin{mydef} 
Let $n$ be a positive integer. Then $[n]$ denotes the standard $n$-element set $[n] = \{ 1, \ldots , n\}$ and $\widehat{[n]}$ denotes the $(n+1)$-element set $\widehat{[n]} = \{0,1,\ldots, n\}$.
\end{mydef}

We now recall some elementary properties of $\Vect_{\FF_1}$, see \cite{szczesny2011representations} for more details: 

\begin{enumerate} 
\item $\Vect_{\FF_1}$ is a finitary proto-abelian category. The zero object of $\Vect_{\FF_1}$ is the singleton set $\{0\}$.
\item If $V \in \Vect_{\FF_1}$, a subspace $W \le V$ is the same thing as a subset of $V$ containing $0_V$. The quotient space $V/W$ is defined as $V/W = (V\setminus W)\cup \{0_V\}$. 
\item The dimension of $V$ is its number of non-zero elements $\dim_{\FF_1}(V) = |V|-1$.  
\item Every morphism $f : V \rightarrow W$ in $\Vect_{\FF_1}$ has a kernel $f^{-1}(0_W)$ and a cokernel $W/f(V)$, and $f(V) \cong V/\ker(f)$ in $\Vect_{\FF_1}$. 
\item $\Vect_{\FF_1}$ possesses a symmetric monoidal structure $\oplus$ defined via 
\[ 
V\oplus W = V\sqcup W/\{0_V,0_W\}.
\] 
\item Every non-zero object in $\Vect_{\FF_1}$ is isomorphic to $\widehat{[n]} = \{ 0,1,\ldots , n\}$ for a suitable $n \in \NN$, and 
\[ 
\widehat{[n]} \cong\underbrace{\widehat{[1]}\oplus \ldots \oplus \widehat{[1]}}_{n-times}
\]
\end{enumerate} 

Vector spaces over $\FF_1$ admit a generalization which will be useful for this paper. Recall that if $M$ is a monoid with absorbing element, a \emph{finite $M$-act} is a finite pointed set $(X,0_X)$ together with a function $M\times X \rightarrow X$, $(m,x)\mapsto m\cdot x$ satisfying the following for all $m_1, m_2 \in M$ and $x \in X$:
\begin{enumerate} 
\item $(m_1m_2)\cdot x = m_1\cdot (m_2\cdot x)$.
\item $1\cdot x = x$.
\item $0\cdot x = 0_X$.
\end{enumerate}
 For $x \in X$, the \emph{$M$-orbit of $x$} is the set $Mx = \{ m\cdot x \mid m \in M\}$. The set $\{0_X\}$ is always an orbit of $X$. We say that $X$ is \emph{finitely-generated} if $X$ is the union of a finite set of $M$-orbits. If $X$ and $Y$ are $M$-acts, an \emph{$M$-equivariant map} is a function $f : X\rightarrow Y$ satisfying $f(0_X) = 0_Y$ and $f(m\cdot x) = m\cdot f(x)$ for all $m \in M$ and $x \in X$. If $M = \hat{G}$ is a finite pointed abelian group, we say that $X$ is \emph{free} if $g\cdot x = x$ and $x \neq 0_X$ implies $g = 1$. This is equivalent to the action of the group $G$ on the set $X \setminus \{0_X\}$ being free in the usual sense.

\begin{mydef} 
Let $G$ be a finite abelian group, written multiplicatively, with $\hat{G} = G\sqcup\{0\}$ the associated pointed abelian group. Then $\Vect_{\hat{G}}$ is the category whose objects are finitely-generated, free pointed $\hat{G}$-acts and whose morphisms are $\hat{G}$-equivariant maps which induce $\FF_1$-linear maps on orbits. More explicitly, this latter condition means that a $\hat{G}$-equivariant map $f : V \rightarrow W$ is a morphism if and only if for all $v_1, v_2 \in V$, $f(v_1) = f(v_2) \neq 0_W$ implies $\hat{G}v_1 = \hat{G}v_2$. Objects in $\Vect_{\hat{G}}$ are called (finite-dimensional) $\hat{G}$-vector spaces, and morphisms are called $\hat{G}$-linear maps.
\end{mydef} 

\begin{rmk} 
Note that $\Vect_{\FF_1} = \Vect_{\hat{1}}$, where $\hat{1} = \{0,1\}$ is the monoid induced from the trivial group.
\end{rmk} 



\begin{rmk}\label{r.Gidem} 
$\hat{G}$-linear maps are actually $\FF_1$-linear. For instance, suppose that $f : V \rightarrow W$ is a morphism in $\Vect(\hat{G})$. If $f(v_1) = f(v_2) \neq 0$, then $\hat{G}v_1 = \hat{G}v_2$ by $\hat{G}$-linearity. But $v_1$ and $v_2$ must be non-zero, so $v_1 = gv_2$ for some $g \in G$. But then $f(v_1) = f(gv_2) = gf(v_2) = gf(v_1)$ and so $g=1$ by the freeness of the orbit $Gf(v_1)$. In particular, $\hat{G}$-linear idempotents either fix elements or map them to zero. 
\end{rmk} 

\noindent We note two functors of interest between $\Vect_{\hat{G}}$ and $\Vect_{\FF_1}$: 
\begin{enumerate} 
\item The extension of scalars functor $\hat{G}\otimes_{\FF_1}- : \Vect_{\FF_1} \rightarrow \Vect_{\hat{G}}$ which carries $V$ to the $\hat{G}$-set with non-trivial $G$-orbits $\{ G\times \{v\} \mid v \in V\setminus\{0_V\} \}$ and $L : V \rightarrow W$ to the $\hat{G}$-linear map satisfying $(1,v) \mapsto (1,L(v))$ for all $v \in V\setminus\{0_V\}$.
\item The $G$-orbit functor $\mcO_G : \Vect_{\hat{G}} \rightarrow \Vect_{\FF_1}$ which carries an object $V$ to its set of $G$-orbits and a $\hat{G}$-linear map $L : V \rightarrow W$ to the $\FF_1$-linear map induced on $\mcO_G(V) \rightarrow \mcO_G(W)$.
\end{enumerate} 

We now use the extension of scalars functor to define a family of monoids which will play a central role in this article.

\begin{mydef}\label{d.I_nG}
Identifying $\Vect_{\FF_1}$ with $\Vect_{\hat{1}}$, we may apply the extension of scalars functor and define  
\[
I_n(\hat{G}):= \End_{\hat{G}}(\hat{G}^{\oplus n})
\] 
where $\hat{G}^{\oplus n} := \hat{G}\otimes_{\FF_1}\widehat{[n]}$. Formally, elements of $I_n(\hat{G})$ can be thought of as matrices with coefficients in $\hat{G}$ with the property that each row and column contains at most one non-zero entry. Matrices with this property are said to be \emph{submonomial}. Multiplication in $I_n(\hat{G})$ is then given by ordinary matrix multiplication.\footnote{This an abuse of notation, of course, but a harmless one.} Note that we abbreviate $I_n = I_n(\hat{1})$ throughout. 
\end{mydef}  

\begin{mydef}\label{d.D_nG} 
$D_n(G)$ denotes the subgroup of invertible diagonal matrices in $I_n(\hat{G})$ (note the lack of a caret over $G$). In other words, $A = [a_{ij}] \in D_n(G)$ if and only if $a_{ij} = 0$ for $i\neq j$ and $a_{ii} \in G$ for all $1 \le i \le n$.
\end{mydef}

\begin{rmk} 
Note that $I_n$ is isomorphic to the usual inverse symmetric monoid considered in semigroup theory \cite{CliffPrest1961,Steinberg2016}.
\end{rmk}

\begin{mydef} 
For $g \in G$, let $gE_{ij}$ denote the standard matrix unit with a $g$ in the $ij^{th}$ entry and zeroes elsewhere. For $S \subseteq [n]$, define $E_S := \sum_{i \in S}{E_{ii}}$. For an injective function $f : S \rightarrow [n]$ and a function $c : f(S) \rightarrow G$, define  
\[
\displaystyle M_{S,f,c}:= \sum_{i \in S}{c(f(i))E_{f(i)i}}. 
\]
 If $S$ is understood from context, we abbreviate $M_{f,c} = M_{S,f,c}$. Note that $E_S$, $M_{S,f,c} \in I_n(\hat{G})$ for all choices of $S$, $f$, and $c$. In fact, every element of $I_n(\hat{G})$ is equal to $M_{S,f,c}$ for unique choices of $S$, $f$ and $c$. In the special case $G = \{1\}$, $c$ is necessarily a constant map, so we abbreviate $M_{S,f,c} = M_{S,f}$.
\end{mydef}   

\begin{rmk}\label{r.composition}
Suppose that $f : S \rightarrow [n]$ and $g : f(S) \rightarrow [n]$ are injections. Then it is easy to check that 
\begin{equation}
\displaystyle M_{f(S),g,d}M_{S,f,c} = M_{S,g\circ f, d\times (c\circ g^{-1})}. 
\end{equation}
 In particular, the element $A = M_{f(S),f^{-1}, 1/(c\circ f)}$ satisfies $AM_{S,f,c} = E_S$ and $M_{S,f,c}A = E_{f(S)}$. 
\end{rmk}     

\begin{mydef}\label{d.rank} 
The rank of $M_{S,f,c}$, denoted $\operatorname{rank} (M_{S,f,c})$, is by definition $|S|$. Since $f$ is assumed to be injective, this coincides with the usual rank of a matrix. We let $I_n(\hat{G})^{\le d}$ denote the set of all $A \in I_n(\hat{G})$ with $\operatorname{rank}(A) \le d$.
\end{mydef}

For the purposes of this paper, we desire proto-abelian and finitary categories. Furthermore, the categories we consider will arise as categories of functors into $\Vect_{\hat{G}}$, for some $G$. It should be unsurprising that $\Vect_{\hat{G}}$ is proto-abelian and finitary, that $\Fun(M,\Vect_{\hat{G}})$ remains so under suitable restrictions on $M$ and that in this case, versions of the Jordan-H\"older and Krull-Schmidt Theorems also hold in $\Fun(M,\Vect_{\hat{G}})$. Checking this is similar to the case of quiver representations over $\FF_1$, so some proofs have been omitted. In spite of its naturality, our approach contrasts with others previously explored in the literature, see the following comment for more details.


\begin{rmk} Similar notions of non-additive representation theory have been discussed at various points of time. However, there are significant departures between these previous approaches and the one taken in the present article. For the convenience of the reader, some of these approaches are compared below: 
\begin{enumerate} 
\item If $A$ is a monoid, the category $C_A$ of finite $A$-acts is considered in \cite{szczesny2014hall}. In contrast to the categories we consider in the present article, the elements of $A$ are not required to act via $\FF_1$-linear maps, nor are morphisms between acts required to be $\FF_1$-linear. The same article also considers the subcategory $C_A^N$ with the same objects as $C_A$, but whose morphisms are required to be $\FF_1$-linear. Again, this category contains more objects than the categories we consider, as we require elements of $A$ to act via $\FF_1$-linear maps. In subsequent articles from Szczesny, $C_A$ is written as $\operatorname{Mod}(A)_{\FF_1}$ and is referred to as the category of representations of $A$ over $\FF_1$ \cite{BeersSzcz2019}. Both of the categories $C_A$ and $C_A^N$ are proto-exact. The category of semigroup representations over a semiring discussed in \cite{IzhRhoStein2011} is constructed in a similar fashion.
\item $\hat{G}$-linear vector spaces have been implicitly considered in the semigroup literature previously, see for instance \cite{Tully1964}. Note that the representations considered in \cite{Tully1964} are either required to be row-monomial or column-monomial, but not both (which is equivalent to what this source terms $\hat{G}$-linearity), and that this mirrors other treatments of semigroup representations with coefficients in a pointed group or semigroup \cite{CliffPrest1961, IzhRhoStein2011}. The closest exception that the author is aware of are the Brandt semigroups, whose axioms imply that they act on themselves via $\FF_1$-linear maps \cite{CliffPrest1961}. However, the monoids considered in this article are generally not Brandt (cf. Example \ref{e.BrandtCE}). It is unclear to the author to what extent the proto-abelian structure of $\Vect_{\hat{G}}$ has appeared in the literature for $G\neq \langle 1\rangle$.  
\item The categories $\Rep (M, \hat{G})$ considered in the present article are more similar to the categories of quiver representations over $\FF_1$ studied in \cite{szczesny2011representations, jun2020quiver, JunSis2023, preprint}. In contrast to the prior literature on $\FF_1$-representations of monoids, $\FF_1$-representations of quivers are defined to be categories of functors into $\Vect_{\FF_1}$, with natural transformations as morphisms. In particular, these categories are automatically proto-abelian. 
\end{enumerate}
\end{rmk}


We now verify that $\Vect_{\hat{G}}$ is proto-abelian. We include the details for the convenience of the reader.

\begin{construction}\label{c.Gpushout}
Consider a diagram  
\begin{center} 
\begin{tikzcd} 
A \arrow[hook]{r}{i} \arrow[swap, two heads]{d}{p} & B  \\ 
C  &
\end{tikzcd} 
\end{center} 
in $\Vect_{\hat{G}}$, with $i$ an injection and $p$ a surjection. Let $D = B\oplus C/\sim$, where $\sim$ is the smallest equivalence relation satisfying $i(a)\sim p(a)$ for all $a \in A$. We can explicitly describe this equivalence relation as follows: if $b \not\in i(A)$ set $[b] = \{b\}$. Otherwise suppose $b = i(a)$ for some $a \in A$. If $p(a) \neq 0$ then $a \neq 0$ and so $i(a) \neq 0$. By Remark \ref{r.Gidem}, $p(a') \neq p(a)$ for all other $a' \neq a$, and similarly $i(a') \neq i(a)$. Thus, we may set $[p(a)] = \{ p(a), i(a)\}$. If $p(a) = 0$, then $i(a) \sim p(a) = 0$ and so $i(a) \in [0]$. Therefore, we have $[0] = \{ i(a) \mid p(a) = 0\}$. Note that this completely determines $\sim$, as any $c \in C$ is of the form $c = p(a)$ for some $a \in A$.   

For all $g \in G$ and $a \in A$, we have $gi(a) = i(ga) \sim p(ga) = gp(a)$. Thus, $D$ is naturally a $G$-act via $g[d] = [gd]$, where $g \in G$ and $[d]$ denotes the equivalence class of $d \in B\oplus C$. Consider the function $p_B : B \rightarrow D$ satisfying $p_B(b) = [b]$ for all $b \in B$. Then $p_B$ is $G$-equivariant and surjective: indeed, any $c \in C$ is of the form $c=p(a)$ for some $a \in A$, and thus $[c] = [p(a)] = [i(a)] = p_B(i(a))$. If $b \not\in i(A)$, then $[b] = \{b\}$ and thus $G[b] = Gb$ is free. For all $a \in A\setminus\{0\}$, if $[i(a)] \neq [0]$ then $[i(a)] = \{i(a),p(a)\}$ and thus $[i(a)] = g[i(a)] = [gi(a)]$ for some $g \in G$ implies $gi(a) \in \{ i(a), p(a)\}$ and so $g = 1$. Thus, $D$ is a $\hat{G}$-vector space and $p_B$ is a $\hat{G}$-linear map.

 We can also define a function $i_C : C \rightarrow D$ via $i_C(c) = [c]$ for all $c \in C$. This is easily seen to be a $\hat{G}$-linear injection. We can now extend our diagram to a commutative square  
\begin{center} 
\begin{tikzcd} 
A \arrow[hook]{r}{i} \arrow[swap, two heads]{d}{p} & B \arrow[two heads]{d}{p_B}  \\ 
C \arrow[swap, hook]{r}{i_C}  & D \\
\end{tikzcd} 
\end{center} 

$D$ is the pushout of $C\twoheadleftarrow A \hookrightarrow B$.
\end{construction}  

\begin{construction}\label{c.Gpullback}
Consider a diagram in $\Vect_{\hat{G}}$ of the form  
\begin{center}
\begin{tikzcd} 
 & B \arrow[two heads]{d}{p'} \\ 
C \arrow[swap, hook]{r}{i'} & D
\end{tikzcd} 
\end{center} 
with $i'$ an injection and $p'$ a surjection. Set $A$ be the subset of $B\times C$ defined by  
\[
A = \{ (b,c) \mid p'(b) = i'(c)\}. 
\] 
Then $p'(b) = i'(c)$ implies $p'(gb) = gp'(b) = gi'(c) = i'(gc)$ for all $g \in G$, and so $A$ is a $G$-act under $g(b,c) = (gb,gc)$. Note that if $(0,c) \in A$ then $0 = p'(0) = i'(c)$ and so $c = 0$ by injectivity. More generally, for any $b \in B$, Remark \ref{r.Gidem} implies that there is at most one $c \in C$ satisfying $(b,c) \in A$. 

We now verify that $A$ is a $\hat{G}$-vector space. Clearly, $(0,0) \in A$ by $\hat{G}$-linearity and $\{(0,0)\}$ is an orbit.  If $g(b,c) = (b,c)$ for any $(b,c) \neq (0,0)$ in $A$ then either $b \neq 0$ or $c \neq 0$. If $b \neq 0$ then $gb = b$ and so $g = 1$ by freeness. Otherwise $b = 0$, which by the previous observation forces $c = 0$, a contradiction. In any case we conclude that all non-zero orbits of $A$ are free, and so $A$ is a $\hat{G}$-vector space. Projection onto the first and second components yield $G$-equivariant functions $i'_B: A \rightarrow B$ and $p'_C : A \rightarrow C$. The map $i'_B$ is injective by the remarks above, and $p'_C$ is surjective by the following argument: for any $c \in C$, $i'(c) = p'(b)$ for some $b \in B$ by the surjectivity of $p'$, and so $(b,c) \in A$. Furthermore, for a fixed $c \in C\setminus\{0\}$, $(b_1,c), (b_2,c) \in A$ implies $p'(b_1) = i'(c) = p'(b_2) \neq 0$ and so $b_1 = b_2$ by Remark \ref{r.Gidem}. It follows that $p'_C$ and $i'_B$ are $\hat{G}$-linear maps and we have a commutative square 
\begin{center} 
\begin{tikzcd} 
A \arrow[hook]{r}{i'_B} \arrow[swap, two heads]{d}{p'_C} & B \arrow[two heads]{d}{p'}  \\ 
C \arrow[swap, hook]{r}{i'}  & D \\
\end{tikzcd}  
\end{center} 
$A$ is the pullback of $B \twoheadrightarrow D \hookleftarrow C$.
\end{construction}

\begin{lem}\label{l.Gprotoab}
Let $G$ be a finite abelian group. Then $\Vect_{\hat{G}}$ is a finitary proto-abelian category.
\end{lem}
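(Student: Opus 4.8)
The plan is to verify each of the five proto-exact axioms for $(\Vect_{\hat{G}}, \mfM, \mfE)$ where $\mfM$ is the class of all monomorphisms and $\mfE$ the class of all epimorphisms, and then to check finitariness separately. First I would observe that monomorphisms in $\Vect_{\hat{G}}$ coincide with injective $\hat{G}$-linear maps and epimorphisms with surjective ones; this follows from Remark \ref{r.Gidem}, since $\hat{G}$-linear maps are $\FF_1$-linear on the level of orbits and injectivity/surjectivity can be tested there. With this identification, axioms (1) and (2) are immediate: the maps $0 \hookrightarrow X$ and $X \twoheadrightarrow 0$ are trivially mono/epi, isomorphisms are both, and injections (resp. surjections) are closed under composition.

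The substantive content is axioms (3), (4), and (5), and the bulk of the work has already been done in Construction \ref{c.Gpushout} and Construction \ref{c.Gpullback}. For axiom (4), given a cospan $C \overset{i'}{\hookrightarrow} D \overset{p'}{\twoheadleftarrow} B$, Construction \ref{c.Gpullback} produces the pullback object $A = \{(b,c) \mid p'(b) = i'(c)\}$ together with an injection $i'_B : A \hookrightarrow B$ and a surjection $p'_C : A \twoheadrightarrow C$, completing the square; dually, axiom (5) is handled by the pushout of Construction \ref{c.Gpushout}. For axiom (3), I would argue that a commutative square of admissible maps as in \eqref{eq.biC} is Cartesian if and only if it is co-Cartesian. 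The cleanest route is to show directly that in $\Vect_{\hat{G}}$ such a square is bi-Cartesian exactly when it identifies $X$ with the pullback and $Y'$ with the pushout simultaneously; concretely, one checks that the canonical comparison map from the pushout of $C \twoheadleftarrow X \hookrightarrow Y$ to $Y'$ is an isomorphism precisely when the comparison map from $X$ to the pullback of $Y \twoheadrightarrow Y' \hookleftarrow C$ is an isomorphism. Because subspaces are just subsets containing the basepoint and quotients are complementary subsets (properties (2)--(4) of $\Vect_{\FF_1}$ transported to $\Vect_{\hat{G}}$ via the orbit functor $\mcO_G$), this reduces to an elementary bijective comparison of the underlying pointed $G$-sets, using the explicit descriptions of $[b]$ and of the pullback pairs $(b,c)$ from the two Constructions.

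Finally, for finitariness I would note that $\Hom_{\hat{G}}(A,B)$ is finite because a $\hat{G}$-linear map is determined by its values on a finite generating set and each value lies in the finite set $B$, so there are only finitely many such maps. For $\operatorname{Ext}_{\hat{G}}(A,B)$, every admissible short exact sequence $B \hookrightarrow E \twoheadrightarrow A$ has middle term $E$ with $\dim_{\hat{G}}(E) = \dim_{\hat{G}}(A) + \dim_{\hat{G}}(B)$ fixed, so $E$ ranges over finitely many isomorphism classes of finite objects, each admitting finitely many admissible mono/epi pairs; hence there are finitely many equivalence classes.

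I expect the main obstacle to be axiom (3), the equivalence of Cartesian and co-Cartesian for squares of admissible maps. Axioms (4) and (5) are essentially quoted from the two Constructions, and finitariness is a counting argument, but axiom (3) requires showing that the pullback and pushout comparison maps are simultaneously isomorphisms. The delicate point is bookkeeping the basepoint identifications: in the pushout one glues $i(a)$ to $p(a)$ and collapses the preimage of $0$ under $p$, while in the pullback one selects compatible pairs, and verifying that these two universal constructions agree on a common square amounts to a careful (but ultimately routine) diagram chase on pointed $G$-sets, leaning on Remark \ref{r.Gidem} to guarantee that no two distinct nonzero elements of an orbit collide.
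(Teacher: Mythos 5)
Your proposal is correct and takes essentially the same route as the paper: Constructions \ref{c.Gpushout} and \ref{c.Gpullback} supply the squares needed for Axioms (4) and (5), Axiom (3) is reduced to verifying that these explicitly constructed pushout and pullback squares are simultaneously Cartesian and co-Cartesian via an element-wise analysis resting on Remark \ref{r.Gidem}, and finitariness follows from the finiteness of the underlying sets (the paper additionally observes that every admissible extension splits, so $\operatorname{Ext}_{\hat{G}}(A,B)$ is in fact a singleton, but your counting bound suffices). The only cosmetic difference is that the paper carries out Axiom (3) by checking the pullback universal property on the constructed pushout square and the pushout universal property on the constructed pullback square, rather than through comparison maps, which amounts to the same diagram chase you describe.
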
 
\begin{proof} 
To show that $\Vect_{\hat{G}}$ is proto-abelian, all that remains to be verified is Axiom (3). Consider the pushout diagram 
\begin{center} 
\begin{tikzcd} 
A \arrow[hook]{r}{i} \arrow[swap, two heads]{d}{p} & B \arrow[two heads]{d}{p_B}  \\ 
C \arrow[swap, hook]{r}{i_C}  & D \\
\end{tikzcd}   
\end{center}
in Construction \ref{c.Gpushout}. Suppose that two maps $\alpha : X \rightarrow B$ and $\beta : X \rightarrow C$ are given which satisfy $p_B \circ \alpha = i_C \circ \beta$. Given $x \in X$, suppose first that $\beta(x) \neq 0$ in $C$. Then $\beta(x) = p(a_x)$ for a unique $a_x \in A$. In this case, set $h(x) = a_x$. Note that $[0] \neq [\alpha(x)] = [\beta(x)] = [p(a_x)] = [i(a_x)]$ implies $\alpha(x) = i(a_x) = [i\circ h](x)$ in this case. Otherwise, $\beta(x) = 0$. Thus, $[\alpha(x)] = [\beta(x)] = [0]$ and so $\alpha(x) = i(a_x)$ for a unique $a_x \in A$. Again, we set $h(x) = a_x$. This defines a function $h : X \rightarrow A$ making the diagram 
\begin{center} 
\begin{tikzcd}  
X  \arrow[bend left= 30]{drr}{\alpha} \arrow[swap, bend right = 30]{ddr}{\beta} \arrow[]{dr}{h} & & \\
& A \arrow[hook]{r}{i} \arrow[swap, two heads]{d}{p} & B \arrow[two heads]{d}{p_B}  \\ 
& C \arrow[swap, hook]{r}{i_C}  & D \\
\end{tikzcd}   
\end{center} 
commute. It is straightforward to check that $a_{gx} = ga_x$ and $a_x = a_y \neq 0$ implies $x = y$, so that $h$ is a $\hat{G}$-linear map. Clearly, $h$ is unique.

Conversely, consider the pullback diagram in Construction \ref{c.Gpullback}:  
\begin{center} 
\begin{tikzcd} 
A \arrow[hook]{r}{i'_B} \arrow[swap, two heads]{d}{p'_C} & B \arrow[two heads]{d}{p'}  \\ 
C \arrow[swap, hook]{r}{i'}  & D \\
\end{tikzcd}  
\end{center}  
Suppose that $\alpha : B \rightarrow Y$ and $\beta : C \rightarrow Y$ are $\hat{G}$-linear maps satisfying $\alpha \circ i'_B = \beta \circ p'_C$. Given $y \neq 0$ in $D$, $y = p'(b_y)$ for a unique non-zero $b_y \in B$. Setting $b_0 = 0$, the map $y \mapsto \alpha(b_y)$ defines a function $h : D \rightarrow Y$. Again, it is straightforward to verify that $b_{gy} = gb_y$, and $h(x) = h(y) \neq 0$ implies $b_x = b_y$, and hence $x = p'(b_x) = p'(b_y) = y$. Thus $h$ is $\hat{G}$-linear. To verify that $\alpha = h\circ p'$, first note that this equality holds for all points of the form $b_y$, $y \in D$. Any remaining point $b \in B$ must satisfy $p'(b) = 0$, and hence $(b,0) \in A$. But then $\alpha(b) = i'(0) = 0$ and so $\alpha(b) = 0 = h(0) = h(p'(b))$ in this case as well. To verify $h\circ i' = \beta$, simply note that if $c \neq 0$ in $C$, then $i'(c) = p'\left(b_{i'(c)} \right)$ and so $(b_{i'(c)}, c) \in A$. Thus $\beta(c) = \alpha(b_{i'(c)}) = h(i'(c))$. It follows that we have a commutative diagram 
\begin{center} 
\begin{tikzcd} 
A \arrow[hook]{r}{i'_B} \arrow[swap, two heads]{d}{p'_C} & B \arrow[two heads]{d}{p'} \arrow[bend left = 30]{ddr}{\alpha} & \\ 
C \arrow[swap, hook]{r}{i'} \arrow[swap, bend right = 30]{drr}{\beta}  & D \arrow[]{dr}{h} & \\ 
& & Y \\
\end{tikzcd}  
\end{center}   
Clearly $h$ is the unique map making this diagram commute. We conclude that Axiom (3) holds and $\Vect_{\hat{G}}$ is proto-abelian. To verify it is finitary, fix any two objects $X$ and $Y$ of $\Vect_{\hat{G}}$. Then $X$ and $Y$ are finite sets, and in particular there are finitely-many $\hat{G}$-linear maps between them. Thus, $\Hom_{\hat{G}}(X,Y)$ is finite. Finally, if $0 \rightarrow Y \xrightarrow[]{f} E \xrightarrow[]{g} X \rightarrow 0$ is an admissible short exact sequence, then $f(Y)$ is a $\hat{G}$-vector subspace of $E$ isomorphic to $Y$ and $(E\setminus f(Y)) \cup \{ 0_E\}$ is a $\hat{G}$-vector space isomorphic to $E/f(Y) \cong X$. In other words,  $E \cong X\oplus Y$. From this, it easily follows that $\operatorname{Ext}_{\hat{G}}(X,Y)$ is a singleton set, and hence $\Vect_{\hat{G}}$ is finitary.
\end{proof}

Recall that a category $M$ is \emph{finite} if it has finitely-many objects and finitely-many morphisms.

\begin{mythm}\label{p.GFunprotoab}
Let $G$ be a finite abelian group, and $M$ a category. Then the category $\Fun(M,\Vect_{\hat{G}})$ of all functors $F : M \rightarrow \Vect_{\hat{G}}$ with natural transformations as morphisms is proto-abelian. If $M$ is finite, then $\Fun(M,\Vect_{\hat{G}})$ is finitary. 
\end{mythm}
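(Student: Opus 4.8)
The plan is to reduce every one of the proto-abelian axioms to the corresponding objectwise statement in $\Vect_{\hat{G}}$, exploiting the general principle that limits and colimits in a functor category are computed objectwise whenever the relevant objectwise (co)limits exist in the target. Since Lemma~\ref{l.Gprotoab} already tells us that $\Vect_{\hat{G}}$ is proto-abelian and finitary, each axiom should then follow by evaluating at the objects $m \in M$. A preliminary task, needed because ``proto-abelian'' demands that $\mfM$ and $\mfE$ be \emph{all} monomorphisms and \emph{all} epimorphisms, is to identify these classes concretely: I claim a natural transformation $\eta \colon F \to G$ is a monomorphism in $\Fun(M,\Vect_{\hat{G}})$ if and only if each component $\eta_m$ is injective, and an epimorphism if and only if each $\eta_m$ is surjective.

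To prove the monomorphism characterization, the easy direction is that objectwise injectivity forces $\eta$ to be monic, working componentwise. For the converse I would build the kernel subfunctor $K$ with $K(m) = \ker(\eta_m) = \eta_m^{-1}(0_{G(m)})$: naturality of $\eta$ shows that $F(\phi)$ carries $K(m)$ into $K(m')$ for every $\phi \colon m \to m'$, so $K$ is a genuine subfunctor of $F$, and the inclusion $\kappa \colon K \hookrightarrow F$ satisfies $\eta \circ \kappa = 0 = \eta \circ 0$. If some $\eta_m$ fails to be injective then $K \neq 0$, hence $\kappa \neq 0$, contradicting monicity. Dually, the image assignment $m \mapsto \eta_m(F(m)) \le G(m)$ is a subfunctor $\operatorname{Im}(\eta)$ of $G$, and comparing the quotient projection $G \twoheadrightarrow G/\operatorname{Im}(\eta)$ with the zero map shows that any epimorphism must be objectwise surjective. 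With these identifications in hand, Axioms (1) and (2) are immediate, since they concern only the zero object, isomorphisms, and closure under composition, all of which are detected objectwise.

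For Axioms (3)--(5) the strategy is to transport Constructions~\ref{c.Gpushout} and~\ref{c.Gpullback} objectwise. Given an admissible span $C \twoheadleftarrow A \hookrightarrow B$, I would define $D$ on objects by letting $D(m)$ be the pushout produced by Construction~\ref{c.Gpushout} applied to $C(m) \twoheadleftarrow A(m) \hookrightarrow B(m)$, and define $D(\phi)$ for $\phi \colon m \to m'$ via the universal property of $D(m)$; functoriality of $D$ then follows from the uniqueness clause of that universal property, and the resulting square is a pushout in the functor category because colimits are objectwise. The legs $p_B$ and $i_C$ are objectwise surjective and injective by Construction~\ref{c.Gpushout}, hence admissible, yielding Axiom (5); the dual argument using Construction~\ref{c.Gpullback} gives Axiom (4). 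For Axiom (3), a square of natural transformations with admissible legs is Cartesian if and only if it is objectwise Cartesian, and co-Cartesian if and only if it is objectwise co-Cartesian, so applying the proto-abelian equivalence of Lemma~\ref{l.Gprotoab} at each $m$ delivers the bi-Cartesian axiom. The point demanding the most care, and the one I expect to be the main obstacle, is exactly this objectwise detection of (co)Cartesian squares together with the functoriality of the pointwise pushout and pullback: $\Vect_{\hat{G}}$ does not possess all pullbacks, only those along admissible maps, so one must check that the squares arising in the axioms are precisely the ones for which the objectwise (co)limits exist.

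Finally, for the finitary claim when $M$ is finite: a natural transformation is determined by its finitely many components $\eta_m \in \Hom_{\hat{G}}(F(m),G(m))$, each lying in a finite set by Lemma~\ref{l.Gprotoab}, so the hom-set $\Hom(F,G)$ in $\Fun(M,\Vect_{\hat{G}})$ is finite. For $\operatorname{Ext}$, any admissible extension $G \hookrightarrow E \twoheadrightarrow F$ has $E(m) \cong F(m)\oplus G(m)$ objectwise, so the objects $E(m)$, the structure maps $E(\phi)$ over the finitely many morphisms $\phi$ of $M$, and the inclusion and projection components all range over finite sets; hence there are only finitely many such diagrams up to equivalence, and $\operatorname{Ext}(F,G)$ is finite.
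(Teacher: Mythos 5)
Your proposal is correct, but it takes a genuinely different route from the paper. For the proto-abelian part the paper does not argue objectwise at all: it invokes Lemma \ref{l.Gprotoab} and then simply cites Example 2.4.4 of \cite{dyckerhoff2012higher}, which asserts that a functor category valued in a proto-abelian category is again proto-abelian with the objectwise structure. Your proof unpacks exactly what that citation hides: the identification of monomorphisms and epimorphisms as the objectwise injective and surjective natural transformations (via the kernel and image subfunctors, which works here because $\hat{G}$-linear maps are $\FF_1$-linear, so non-injectivity is detected by a nonzero kernel), the assembly of Constructions \ref{c.Gpushout} and \ref{c.Gpullback} into pointwise pushouts and pullbacks of functors, and the objectwise detection of (co)Cartesian squares. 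The one step you flag but do not fully discharge --- that a Cartesian (resp.\ co-Cartesian) square with admissible legs in $\Fun(M,\Vect_{\hat{G}})$ is objectwise Cartesian (resp.\ co-Cartesian) --- closes in one line: the cospan (resp.\ span) in such a square has admissible legs, so the objectwise pullback (resp.\ pushout) exists and is a pullback (resp.\ pushout) in the functor category; by uniqueness of (co)limits the given square is isomorphic to it, hence objectwise (co)Cartesian. With that observation your argument is complete. What the paper's route buys is brevity and reliance on established literature; what yours buys is a self-contained verification that also makes explicit the objectwise descriptions of subobjects, quotients, and kernels that the paper uses freely afterwards (e.g.\ in the remark that subobjects, quotients, kernels, cokernels, and direct sums are computed pointwise in $\Fun(M,\Vect_{\hat{G}})$). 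Your finitary argument is essentially identical to the paper's: finiteness of Hom via the injection into the finite product $\prod_{X \in \Obj(M)}{\Hom_{\hat{G}}(A(X),B(X))}$, and finiteness of Ext via $E(X)\cong A(X)\oplus B(X)$ together with the finiteness of $M$.
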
 
\begin{proof}   
$\Vect_{\hat{G}}$ is proto-abelian by Lemma \ref{l.Gprotoab}, and therefore $\Fun(M,\Vect_{\hat{G}})$ is proto-abelian by Example 2.4.4 of \cite{dyckerhoff2012higher}.

Now suppose that $M$ is finite. Since there is an injective map  
\[
\displaystyle \Hom (A,B) \hookrightarrow \prod_{X \in \Obj(M)}{\Hom_{\hat{G}}(A(X),B(X))} 
\]
and $\Obj(M)$ is a finite set, we have that $\Hom(A,B)$ is a finite set for all $A,B \in \Fun(M,\Vect_{\hat{G}})$. If $\E: 0 \rightarrow B \rightarrow E \rightarrow A \rightarrow 0$ is an element of $\operatorname{Ext}^1(A,B)$, then note that $E(X) \cong A(X) \oplus B(X)$ for all objects $X$ of $M$. If $M$ has finitely-many morphisms then there are only finitely-many such $E$, up to isomorphism. Finally, there are obviously finitely-many $\E$ for any triple $(B,E,A)$ when $M$ has finitely-many objects, and so $\operatorname{Ext}^1(A,B)$ is a finite set. \end{proof}  

\begin{rmk} 
The elementary constructions in $\Vect_{\FF_1}$ described above work analogously in $\Vect_{\hat{G}}$. Specifically:
\begin{enumerate} 
\item A subspace $W \le V$ in $\Vect_{\hat{G}}$ is a union of $G$-orbits that contains the trivial orbit $\{0_V\}$. The quotient space $V/W$ is still defined as $(V\setminus W)\cup \{0_V\}$. 
\item The dimension of $V$ is now the number of non-trivial $G$-orbits $\dim_{\hat{G}}(V) = |\mcO_G(V)|-1$. A basis for $V$ is a selection $\{v_1,\ldots , v_n\}$ of representatives for the non-trivial $G$-orbits of $V$, where $n = \dim_{\hat{G}}(V)$.
\item Kernels and cokernels of morphisms $f : V \rightarrow W$ are defined as $\ker(f) = f^{-1}(0_W)$ and $\operatorname{coker}(f) = W/f(V)$. 
\item There is a symmetrical monoidal structure $\oplus$ defined via $V\oplus W = V\sqcup W/\{0_V,0_W\}$. 
\item Every non-zero object in $\Vect_{\hat{G}}$ is isomorphic to $\hat{G}\otimes_{\FF_1}\widehat{[n]}$ for a suitable positive integer $n$, and 
\[ 
\hat{G}\otimes_{\FF_1}\widehat{[n]} \cong (\hat{G}\otimes_{\FF_1}\widehat{[1]})^{\oplus n}
\]
\end{enumerate}
Subobjects, quotients, kernels, cokernels, and direct sums are defined pointwise in $\Fun(M,\Vect_{\hat{G}})$. Provided that $M$ is a finite category, the \emph{dimension} of $F \in \Fun(M,\Vect_{\hat{G}})$ is defined to be 
\[
\dim_{\hat{G}}(F) = \sum_{X \in \Obj(M)}{\dim_{\hat{G}}(F(X))} 
\] 
The \emph{dimension vector} of $F$ is the function $\underline{\dim}(F) : \Obj(M) \rightarrow \NN$ defined by $\underline{\dim}(F)(X) = \dim_{\hat{G}}(F(X))$.
\end{rmk}

We will also need the following results, whose proofs are analogous to those found in \cite{szczesny2011representations}.  

\begin{mydef} 
Let $M$ be a finite category. An object $V \in \Fun(M,\Vect_{\hat{G}})$ is \emph{simple} if $V \neq 0$ and $W \le V$ implies $W = 0$ or $W = V$. $V$ is \emph{semisimple} if it is isomorphic to a direct sum of simple objects. Finally, $V$ is \emph{indecomposable} if $V \neq 0$ and $V = V_1\oplus V_2$ implies that $V_1 = 0$ or $V_2 = 0$.
\end{mydef}   

\begin{mydef} 
Let $M$ be a finite category and $V \in \Fun(M,\Vect_{\hat{G}})$. A natural transformation $N : V \rightarrow V$ is said to be \emph{nilpotent} if there exists an $m \in \mathbb{N}$ such that  
\[
N^m = \underbrace{N\circ N\circ \cdots \circ N}_{m-\text{times}} 
\]
 is the zero transformation.
\end{mydef}

\begin{lem}\label{l.indecompmorph}
Let $M$ be a finite category, and suppose that $V \in \Fun(M,\Vect_{\hat{G}})$ is indecomposable. Then any endomorphism of $V$ is either nilpotent or an isomorphism.
\end{lem}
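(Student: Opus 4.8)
The plan is to prove an $\FF_1$-analog of Fitting's Lemma. Since $M$ is finite and each $V(X)$ is a finite-dimensional $\hat{G}$-vector space, $V$ has finite total dimension $\dim_{\hat{G}}(V) = \sum_{X}\dim_{\hat{G}}(V(X))$. For an endomorphism $f : V \to V$ (a natural transformation), each power $f^n$ is again an endomorphism, and—because kernels and images are computed pointwise in $\Fun(M,\Vect_{\hat{G}})$—the images $f^n(V)$ form a descending chain of subfunctors while the kernels $\ker(f^n)$ form an ascending chain. Each of these subfunctors has a well-defined dimension bounded by $\dim_{\hat{G}}(V)$, so both chains stabilize; I would fix an $n$ (for instance $n = \dim_{\hat{G}}(V)$) with $f^n(V) = f^{m}(V)$ and $\ker(f^n) = \ker(f^m)$ for all $m \ge n$.

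Next I would establish the Fitting decomposition $V = \ker(f^n)\oplus f^n(V)$ as an internal direct sum of subfunctors. Both pieces are $f$-invariant: if $f^n(v) = 0$ then $f^n(f(v)) = 0$, and $f(f^n(V)) = f^{n+1}(V) = f^n(V)$ by stabilization, so each is a subfunctor of $V$ preserved by $f$. For trivial intersection, take $v$ with $v = f^n(w)$ and $f^n(v) = 0$; then $f^{2n}(w) = 0$, so $w \in \ker(f^{2n}) = \ker(f^n)$ by stabilization, whence $v = f^n(w) = 0$. To see that the two pieces fill out $V$, I would invoke the $\hat{G}$-analog of rank-nullity, namely $f^n(V) \cong V/\ker(f^n)$, which gives $\dim_{\hat{G}} f^n(V) + \dim_{\hat{G}}\ker(f^n) = \dim_{\hat{G}}(V)$. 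Since a subspace of a $\hat{G}$-vector space is a union of $G$-orbits, two subspaces meeting only in $\{0\}$ have dimensions that add when unioned, so $\ker(f^n)\oplus f^n(V)$ is a subfunctor of $V$ of the same dimension as $V$, and therefore equals $V$.

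Finally, indecomposability of $V$ forces one summand to vanish. If $f^n(V) = 0$ then $f^n = 0$ and $f$ is nilpotent. If $\ker(f^n) = 0$ then, since $\ker(f) \subseteq \ker(f^n) = 0$, the map $f$ is a pointwise injection of finite sets, hence a pointwise bijection, and a natural transformation that is an isomorphism at every object of $M$ is an isomorphism in $\Fun(M,\Vect_{\hat{G}})$. The step I expect to require the most care is the decomposition in the second paragraph: in this non-additive setting one cannot simply quote the module-theoretic Fitting argument, so the internal direct sum must be justified concretely through the orbit description of subspaces, the trivial-intersection computation, and the $\hat{G}$-linear rank-nullity identity $f^n(V) \cong V/\ker(f^n)$. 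Everything else reduces to the pointwise structure of $\Vect_{\hat{G}}$ together with the finiteness guaranteed by Lemma \ref{l.Gprotoab} and Theorem \ref{p.GFunprotoab}.
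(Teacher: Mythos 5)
Your proof is correct, and it rests on the same underlying idea as the paper's proof---a Fitting-type decomposition of $V$ combined with indecomposability---but it constructs and verifies that decomposition by a genuinely different route. The paper defines the two summands pointwise by the asymptotic behaviour of elements under iteration: $V^{\nil}(X) = \{ m \in V(X) \mid F_X^k(m) = 0 \text{ for some } k\}$ and $V^{\iso}(X) = \{ m \in V(X) \mid F_X^k(m) \neq 0 \text{ for all } k\}$, so that $V(X) = V^{\nil}(X) \cup V^{\iso}(X)$ is a tautological partition; the real work, which the paper leaves implicit, is in checking that both pieces are subfunctors---for $V^{\iso}$ this needs the observation that a never-vanishing element of a finite $\hat{G}$-vector space is in fact \emph{periodic} under $F_X$, because $\hat{G}$-linear maps are injective off their kernels. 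Your version instead takes the summands to be $\ker(f^n)$ and $f^n(V)$ for $n$ large; there the subfunctor property is immediate (kernels and images are computed pointwise and are preserved by naturality), and the work shifts to the trivial-intersection computation and the rank--nullity dimension count, both of which you carry out correctly (the identity $f^n(V) \cong V/\ker(f^n)$ does hold in $\Vect_{\hat{G}}$, again because morphisms are injective off their kernels, and two subspaces meeting only in $0$ are unions of disjoint collections of orbits, so their dimensions add). The two decompositions in fact coincide: for $n$ past stabilization one has $V^{\nil} = \ker(f^n)$ and $V^{\iso} = f^n(V)$. So the difference is one of mechanism rather than destination. The paper's argument is shorter and exploits the combinatorics of finite pointed sets, while yours is more self-contained in justifying that the two pieces really form a direct sum (something the paper merely asserts) and transfers the classical module-theoretic proof of Fitting's Lemma essentially verbatim to the non-additive setting.
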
 

\begin{proof} 
Let $F : V \rightarrow V$ be a natural transformation. Define $V^{\nil} \le V$ by the condition $V^{\nil}(X) = \{ m \in V(X) \mid F_X^k(m) = 0\text{ for some $k \in \mathbb{N}$}\}$ and $V^{\iso} \le V$ by the condition $V^{\iso}(X) = \{ m \in V(X) \mid F_X^k(m) \neq 0\text{ for all $k \in \mathbb{N}$} \}$. Then $V^{\nil}$ and $V^{\iso}$ are sub-objects of $V$ and $V = V^{\nil} \oplus V^{\iso}$. Since $V$ is indecomposable, either $V = V^{\nil}$ or $V = V^{\iso}$. If $V = V^{\nil}$ then $F$ is nilpotent, since $\bigcup_{X \in \Obj(M)}{V(X)}$ is a finite set. Otherwise $V = V^{\iso}$, in which case $F_X$ is necessarily a bijection for all $X \in \Obj(M)$, and hence $F$ is an isomorphism.
\end{proof}

\begin{mythm}[Krull-Schmidt Theorem]
Let $M$ be a finite category. Then every object in  \\
$\Fun(M,\Vect_{\hat{G}})$ is isomorphic to a direct sum of indecomposable objects, and the indecomposable summands are unique up to isomorphism and permutation of factors.
\end{mythm}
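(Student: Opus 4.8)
The plan is to convert every direct-sum decomposition into a combinatorial partition of a canonically associated set of orbits, and then read off both existence and uniqueness from the decomposition of that set into connected components. This sidesteps the classical route through endomorphism rings, which is problematic here because $\Fun(M,\Vect_{\hat{G}})$ is not additive.

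First I would attach to each $F \in \Fun(M,\Vect_{\hat{G}})$ the finite set of its nonzero orbits
\[
\Omega(F) = \bigsqcup_{X \in \Obj(M)} \bigl( \mcO_G(F(X)) \setminus \{0\} \bigr),
\]
equipped with the partial dynamics coming from $M$: for a morphism $\phi \colon X \to Y$ and an orbit $\omega \in \mcO_G(F(X))$, Remark \ref{r.Gidem} applied to $F(\phi)$ shows that $F(\phi)$ sends $\omega$ either to a single nonzero orbit of $F(Y)$ or to $0$. Because subobjects and direct sums in $\Fun(M,\Vect_{\hat{G}})$ are computed pointwise, and a subspace of an object of $\Vect_{\hat{G}}$ is exactly a union of $G$-orbits, a subobject $W \le F$ is the same datum as a subset $\Omega(W) \subseteq \Omega(F)$ that is forward-closed under these partial maps. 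Consequently a decomposition $F = F_1 \oplus F_2$ corresponds precisely to a partition $\Omega(F) = \Omega(F_1) \sqcup \Omega(F_2)$ into two forward-closed subsets, and conversely. The crucial observation is that in such a partition no orbit of one block can map nontrivially to an orbit of the other in either direction, since that would violate forward-closedness of one of the blocks.

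Next I would form the equivalence relation $\approx$ on $\Omega(F)$ generated by $\omega \approx \omega'$ whenever $F(\phi)(\omega) = \omega' \ne 0$ for some morphism $\phi$ of $M$; its classes are the connected components of the undirected graph whose edges record nonzero images. Each class $C$ is forward-closed and therefore cuts out a subobject $F_C \le F$, and since the classes partition $\Omega(F)$ we obtain $F = \bigoplus_C F_C$, giving existence. Each $F_C$ is indecomposable: a nontrivial splitting would partition the single connected component $C$ into two nonempty forward-closed blocks with no connecting edges, which is impossible. (That $F_C$ is indecomposable can alternatively be confirmed via Lemma \ref{l.indecompmorph}.)

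For uniqueness, suppose $F = \bigoplus_{i=1}^n A_i$ with each $A_i$ indecomposable. The subsets $\Omega(A_i)$ form a partition of $\Omega(F)$ into forward-closed blocks, so by the boundary observation above no edge joins different blocks; hence each $\Omega(A_i)$ is a union of $\approx$-classes. If some $\Omega(A_i)$ contained two or more classes, the induced sub-partition would split $A_i$, contradicting indecomposability; therefore each $\Omega(A_i)$ is a single class $C_i$ and $A_i = F_{C_i}$. Thus any decomposition into indecomposables coincides with the component decomposition $\{F_C\}_C$, which proves uniqueness up to permutation (in fact the summands agree on the nose as subobjects). The main obstacle is precisely the failure of additivity, which rules out the usual Fitting-plus-local-rings exchange argument; the work lies in setting up the orbit dictionary carefully, and the one genuinely non-formal input is Remark \ref{r.Gidem}, ensuring that the structure maps act on orbits as honest partial maps so that subobjects and direct sums translate cleanly into forward-closed subsets and their partitions.
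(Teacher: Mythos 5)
Your proof is correct, but it takes a genuinely different route from the paper. The paper simply invokes Szczesny's Krull--Schmidt argument for quiver representations over $\FF_1$ (Theorem 4 of \cite{szczesny2011representations}), substituting Lemma \ref{l.indecompmorph}: that is, it runs the classical exchange-and-induction argument, with the nilpotent-or-isomorphism dichotomy for endomorphisms of indecomposables playing the role of Fitting's lemma. You instead set up a combinatorial dictionary --- subobjects correspond to forward-closed subsets of the orbit set $\Omega(F)$, internal direct-sum decompositions to partitions into forward-closed blocks --- and then both existence and uniqueness fall out of the decomposition of the orbit graph into connected components. This is sound: the correspondence holds because subobjects, quotients and direct sums are computed pointwise and subspaces in $\Vect_{\hat{G}}$ are exactly unions of $G$-orbits, and your ``boundary observation'' (no edge crosses a forward-closed partition) does force every decomposition into indecomposables to coincide with the component decomposition. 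Notably your argument proves something \emph{stronger} than the stated theorem: the internal decomposition into indecomposables is unique on the nose, as a set of subobjects, not merely up to isomorphism and permutation --- a rigidity phenomenon special to this setting (over a field one only has uniqueness up to isomorphism, since ``diagonal'' complements exist). The trade-off is that your proof exploits the combinatorial rigidity of $\Vect_{\hat{G}}$ and would not survive in a more general finitary proto-abelian category, whereas the paper's route is the portable one. Two small points: the fact that structure maps send orbits to orbits or to zero follows from $G$-equivariance and invertibility of group elements alone (Remark \ref{r.Gidem} is about injectivity off the kernel and is not what is needed there); and in the uniqueness step you should first transport an abstract isomorphism $F \cong \bigoplus_i A_i$ to an internal decomposition by replacing each $A_i$ with its image --- standard, and implicit in your write-up.
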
  

\begin{proof} 
The same proof as in Theorem 4 in \cite{szczesny2011representations} works here, substituting Lemma \ref{l.indecompmorph} for Szczesny's Lemma 4.2.
\end{proof}

\begin{mythm}[Jordan-H\"older Theorem] 
Suppose that $M$ is a finite category and $V \in \Fun(M,\Vect_{\hat{G}})$. Let $0 = V_0 \le V_1 \le \ldots \le V_n = V$ and $0 = V_0' \le V_1' \le \ldots \le V_m' = V$ be two filtrations of $V$ whose successive quotients $A_j = V_j/V_{j-1}$ and $B_j = V_j'/V_{j-1}'$ are simple for all $j$. Then $m=n$ and there is a permutation $\sigma$ of $\{1,\ldots , n\}$ such that $A_j \cong B_{\sigma(j)}$ for all $j = 1,\ldots , n$.
\end{mythm}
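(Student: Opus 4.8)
The plan is to run the classical Jordan-Hölder induction, with the \emph{second isomorphism theorem} taking over the role played by additivity in the abelian setting. The crucial structural fact I would isolate first is that subobjects behave set-theoretically: for a fixed $V \in \Fun(M,\Vect_{\hat{G}})$, a subobject is a subfunctor $W$ with each $W(X) \le V(X)$ a union of $G$-orbits containing $0$, and such subfunctors are closed under pointwise intersection $W_1 \cap W_2$ and pointwise union $W_1 \cup W_2$. The union is the join in the subobject lattice, and I would record that for any $W_1, W_2 \le V$ the two quotients $(W_1 \cup W_2)/W_1$ and $W_2/(W_1\cap W_2)$ are, pointwise and with their induced $M$-actions, literally the same functor $(W_2 \setminus W_1)\cup\{0_V\}$; this gives the second isomorphism theorem $(W_1 \cup W_2)/W_1 \cong W_2/(W_1\cap W_2)$ in $\Fun(M,\Vect_{\hat{G}})$ on the nose. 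Everything here is checked in $\Vect_{\hat{G}}$ and transported to the functor category since subobjects, quotients, intersections and unions are all computed pointwise.

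I would then induct on $\dim_{\hat{G}}(V)$, which is finite because each $V(X)$ is a finite set. The base case is $V$ simple (in particular $\dim_{\hat{G}}(V) \le 1$): a simple object has no proper nonzero subobjects, so its only composition series is $0 \le V$, forcing $n = m = 1$ and $A_1 = V = B_1$. For the inductive step, assume the statement for all objects of strictly smaller dimension. If $V_{n-1} = V'_{m-1}$, restrict both series to this common subobject, which has dimension $\dim_{\hat{G}}(V) - \dim_{\hat{G}}(A_n) < \dim_{\hat{G}}(V)$, and apply the inductive hypothesis; matching the lower factors and noting $A_n = V/V_{n-1} = V/V'_{m-1} = B_m$ finishes this case. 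Otherwise $V_{n-1} \neq V'_{m-1}$. Since $A_n$ and $B_m$ are simple, $V_{n-1}$ and $V'_{m-1}$ are maximal proper subobjects, and their distinctness gives $V'_{m-1} \not\subseteq V_{n-1}$; hence $V_{n-1} \subsetneq V_{n-1}\cup V'_{m-1}\le V$, and maximality forces $V_{n-1}\cup V'_{m-1} = V$.

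Write $W = V_{n-1}\cap V'_{m-1}$ and fix any composition series $0 = W_0 \le \cdots \le W_k = W$ (finite length is automatic). The second isomorphism theorem now gives $V/V_{n-1} \cong V'_{m-1}/W$ and $V/V'_{m-1} \cong V_{n-1}/W$, so $V_{n-1}/W$ and $V'_{m-1}/W$ are simple, and appending $W \le V_{n-1}$, respectively $W \le V'_{m-1}$, to the series of $W$ yields composition series of $V_{n-1}$ and $V'_{m-1}$, each of length $k+1$. Both $V_{n-1}$ and $V'_{m-1}$ have dimension strictly below that of $V$, so applying the inductive hypothesis to each (comparing with $0 = V_0 \le \cdots \le V_{n-1}$ and $0 = V'_0 \le \cdots \le V'_{m-1}$) yields $n - 1 = k+1 = m - 1$, whence $m = n$, and identifies the lower factors of each original series with the factors of $W$ together with one extra simple factor ($V_{n-1}/W$ in the first case, $V'_{m-1}/W$ in the second). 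It remains to match the two ``extra'' factors of each series: the first contributes $\{V_{n-1}/W,\ V/V_{n-1}\}$ and the second $\{V'_{m-1}/W,\ V/V'_{m-1}\}$, and the two displayed isomorphisms pair them, producing the required permutation $\sigma$ with $A_j \cong B_{\sigma(j)}$.

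The step I expect to demand the most care is the second isomorphism theorem, as it is exactly where one would normally invoke additivity: I must confirm that the lattice-theoretic join of two subobjects is genuinely their orbit-wise union rather than an abstract pushout image, and that the two candidate quotients coincide as functors (compatibly with every $V(\phi)$), so that the comparison is an honest isomorphism in $\Fun(M,\Vect_{\hat{G}})$. Once this is in place, the remainder is the standard Jordan-Hölder bookkeeping, following Szczesny's argument for quiver representations over $\FF_1$ essentially verbatim.
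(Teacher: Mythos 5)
Your proof is correct and follows essentially the same route as the paper, which simply invokes Szczesny's Theorem 3 for quiver representations over $\FF_1$: the classical Jordan--H\"older induction on $\dim_{\hat{G}}$, made to work non-additively because joins of subobjects are literal orbit-wise unions, so that the second isomorphism theorem $(W_1\cup W_2)/W_1 \cong W_2/(W_1\cap W_2)$ holds on the nose. One caveat: your parenthetical ``in particular $\dim_{\hat{G}}(V)\le 1$'' in the base case is false --- simple objects of $\Fun(M,\Vect_{\hat{G}})$ can have arbitrarily large dimension (e.g.\ $\hat{G}^{\oplus n}$ over $I_n(\hat{G})$) --- but nothing in your argument uses it, since the case ``$V$ simple'' is handled directly.
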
 

\begin{proof} 
The same proof as in Theorem 3 in \cite{szczesny2011representations} works here.
\end{proof}

Finally, many of the categories considered in this article are semisimple, in the sense defined below. We prove some elementary properties of such categories.

\begin{mydef}\label{d.ss}
Let $\C$ be a full proto-abelian subcategory of $\Fun(M,\Vect_{\hat{G}})$ that is closed under direct sums, and let $A,B \in \C$. A short exact sequence $0 \rightarrow A \rightarrow E \rightarrow B \rightarrow 0$ in $\C$ is said to be \emph{split} if it is equivalent to $0 \rightarrow A \rightarrow A\oplus B \rightarrow B \rightarrow 0$ (with the obvious inclusion and projection maps). $\C$ is said to be \emph{split-semisimple} if every short exact sequence in $\C$ splits, and $\emph{semisimple}$ if every non-zero object in $\C$ is isomorphic to a direct sum of simple objects. An object $V \in \C$ is said to be \emph{completely reducible} if whenever $S \in \C$ and $S \le V$, there exists a $T \in \C$ with $T \le V$ and $V = S\oplus T$. 
\end{mydef}     


The following proposition will allow us to prove the semisimplicity of the categories discussed in this article.

\begin{pro}\label{p.verifyss}
Let $M$ be a finite category, and suppose that $\C$ is a full subcategory of $\Fun(M,\Vect_{\hat{G}})$ with the property that for all short exact sequences  
\[
0 \rightarrow A \rightarrow E \rightarrow B \rightarrow 0 
\]
in $\Fun(M,\Vect_{\hat{G}})$, $E \in \C$ if and only if $A,B \in \C$. Then the following are equivalent: 
\begin{enumerate}
 \item [(a)] $\C$ is split-semisimple.
 \item [(b)] Every short exact sequence $0 \rightarrow A \rightarrow E \rightarrow B \rightarrow 0$ in $\C$ with $A$ simple splits. 
 \item [(c)] Every object of $\C$ is completely reducible. 
 \item [(d)] $\C$ is semisimple.
 \end{enumerate}
\end{pro}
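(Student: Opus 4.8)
The plan is to prove the chain of implications $(a) \Rightarrow (d) \Rightarrow (c) \Rightarrow (b) \Rightarrow (a)$, leaning on the Jordan--Hölder and Krull--Schmidt theorems established earlier, together with the hypothesis that $\C$ is closed under extensions and sub/quotient objects (the ``two-out-of-three'' property for short exact sequences). First I would record a preliminary observation: under the stated hypothesis, $\C$ is closed under subobjects, quotients, and extensions, so every object of $\C$ admits a composition series (by Jordan--Hölder applied in $\Fun(M,\Vect_{\hat{G}})$) all of whose factors again lie in $\C$. This lets me induct on dimension or composition length throughout.

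For $(a)\Rightarrow(d)$, I would induct on the dimension $\dim_{\hat{G}}(V)$ of a non-zero $V\in\C$. Pick a simple subobject $S\le V$ (it lies in $\C$ by closure under subobjects, and it exists because a minimal non-zero subobject in a composition series is simple). The quotient $V/S$ lies in $\C$ and has strictly smaller dimension, hence is a direct sum of simples by induction; moreover $0\to S\to V\to V/S\to 0$ splits by split-semisimplicity, giving $V\cong S\oplus(V/S)$, a direct sum of simples. For $(d)\Rightarrow(c)$: given $V=\bigoplus_i S_i$ a sum of simples and a subobject $S\le V$ in $\C$, the complement $T$ is built by the standard maximal-complement argument. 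Concretely, I would take $T\le V$ maximal among subobjects (in $\C$) with $S\cap T=0$ — here one must verify that in this proto-abelian setting the relevant intersection and sum operations behave well and that $V = S\oplus T$ follows from maximality, using that simplicity of summands forces $S_i$ to be absorbed into either $S$ or $T$. For $(c)\Rightarrow(b)$: if $0\to A\to E\to B\to 0$ is a sequence in $\C$ with $A$ simple, then $A\le E$ (identifying $A$ with its image), and complete reducibility of $E$ furnishes $T\le E$ with $E=A\oplus T$; the composite $T\hookrightarrow E\twoheadrightarrow B$ is then an isomorphism, yielding a splitting. Finally $(b)\Rightarrow(a)$ proceeds by induction on the length of $A$ in an arbitrary sequence $0\to A\to E\to B\to 0$: choosing a simple subobject $A_0\le A$, I push out / restrict to reduce to the simple case covered by $(b)$, splitting off $A_0$ and applying the inductive hypothesis to the residual sequence with $A/A_0$.

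The main obstacle I anticipate is that $\Vect_{\hat{G}}$ and $\Fun(M,\Vect_{\hat{G}})$ are \emph{not} additive categories, so several familiar abelian-category reflexes need care. In particular, the notion of ``complement'' and the splitting criterion ``a section of $E\twoheadrightarrow B$ gives a direct-sum decomposition'' must be justified from the proto-abelian axioms rather than taken for granted; I would rely on the explicit pullback and pushout constructions (Constructions \ref{c.Gpushout} and \ref{c.Gpullback}) and on the fact, proved inside Lemma \ref{l.Gprotoab}, that every admissible short exact sequence $0\to Y\to E\to X\to 0$ forces $E\cong X\oplus Y$ as objects — though crucially this object-level splitting does not by itself make the sequence split in the sense of Definition \ref{d.ss}, since it need not respect the given maps. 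Thus the real content is arranging the splitting maps to be compatible, which is exactly what $(b)$ and $(c)$ encode. The maximal-complement argument in $(d)\Rightarrow(c)$ is where I expect to spend the most effort, verifying that the lattice of subobjects in this non-additive setting supports the modular-law-style manipulation needed to extract a genuine complement lying in $\C$.

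Throughout, the standing hypothesis that $E\in\C$ if and only if $A,B\in\C$ is what keeps every intermediate object inside $\C$, so that the inductive constructions never leave the subcategory; I would flag explicitly each time this closure property is invoked, since it is the hypothesis doing the heavy lifting in linking the internal structure of $\C$ to that of the ambient functor category.
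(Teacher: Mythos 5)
Your overall strategy is sound but genuinely different from the paper's. The paper closes the cycle as (a)$\Rightarrow$(b) (immediate), (b)$\Rightarrow$(d) (a single induction on dimension: split off a simple subobject $S\le V$ and induct on $V/S$), (c)$\Leftrightarrow$(d), and (c)$\Rightarrow$(a) (complete reducibility of $E$ plus Krull--Schmidt gives $E\cong A\oplus B$ compatibly with the maps). Your cycle (a)$\Rightarrow$(d)$\Rightarrow$(c)$\Rightarrow$(b)$\Rightarrow$(a) foregoes the free implication (a)$\Rightarrow$(b), so you pay for it: you run a dimension/length induction twice (in (a)$\Rightarrow$(d) and again in (b)$\Rightarrow$(a)), and your hardest step, (b)$\Rightarrow$(a), is exactly the one the paper avoids by routing through (d) and (c). What your route buys is explicitness where the paper is terse: the paper dispatches (c)$\Leftrightarrow$(d) in one clause, whereas your maximal-complement argument for (d)$\Rightarrow$(c) is the real content there, and it works more easily than you fear. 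In $\Fun(M,\Vect_{\hat{G}})$ a subobject is a pointwise $M$-stable union of $G$-orbits, so joins and meets of subobjects are literal unions and intersections and the subobject lattice is distributive; thus if $V=\bigoplus_i S_i$ and $T\le V$ is maximal with $S\cap T=0$, any $v\in V\setminus(S\cup T)$ lies in some $S_i$ with $S_i\cap S=0$ (simplicity rules out $S_i\subseteq S$), and then $(T\cup S_i)\cap S=(T\cap S)\cup(S_i\cap S)=0$ contradicts maximality. Alternatively one checks directly that $(V\setminus S)\cup\{0\}$ is a subfunctor whenever $V$ is semisimple.

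The step you must not leave as sketched is (b)$\Rightarrow$(a). The naive implementation --- split off a simple $A_0\le A$ to get $E\cong A_0\oplus E/A_0$, then apply induction to $0\to A/A_0\to E/A_0\to B\to 0$ --- only yields $E\cong A_0\oplus(A/A_0)\oplus B$, and at that point nothing identifies $A_0\oplus(A/A_0)$ with $A$ (a priori $A$ could itself be a non-split extension of $A/A_0$ by $A_0$), let alone compatibly with the given inclusion $A\hookrightarrow E$, which is what Definition \ref{d.ss} demands; this is precisely the maps-versus-objects pitfall you yourself flag. The repair uses the internal picture: a splitting of $0\to A_0\to E\to E/A_0\to 0$ is equivalent to an internal complement $T_0\le E$ with $E=A_0\oplus T_0$; distributivity of the subobject lattice then gives $A=A_0\oplus(A\cap T_0)$ automatically, and restricting $p$ to $T_0$ yields a short exact sequence $0\to A\cap T_0\to T_0\to B\to 0$ whose terms all lie in $\C$ by the closure hypothesis. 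Induction on the length of $A$ applied to this sequence produces $T_1\le T_0$ with $T_0=(A\cap T_0)\oplus T_1$, whence $E=A_0\oplus(A\cap T_0)\oplus T_1=A\oplus T_1$, so $T_1$ is a genuine complement of $A$ in $E$ and the original sequence splits via $\id_A$ on $A$ and $p|_{T_1}$ on $T_1$. With this repair your cycle closes correctly; the paper's arrangement reaches the same conclusion with a single induction and none of this bookkeeping.
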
 

\begin{proof} 
To begin, note that any such category $\C$ is a finitary proto-abelian category using the proto-abelian structure of $\Fun(M,\Vect_{\hat{G}})$. Furthermore it is a full subcategory of $\Fun(M,\Vect_{\hat{G}})$ by definition, and closed under taking direct sums, because any two $A, B \in \C$ have the split short exact sequence 
\[ 
0 \rightarrow A \rightarrow A\oplus B \rightarrow B \rightarrow 0
\]
 in $\Fun(M,\Vect_{\hat{G}})$. Thus, $\C$ satisfies the assumptions of Definition \ref{d.ss}. Clearly (a)$\Rightarrow$(b), and (c)$\Leftrightarrow$(d) because every object in $\C$ has finite dimension. To show (c)$\Rightarrow$(a), note that if $0 \rightarrow A \rightarrow E \rightarrow B \rightarrow 0$ is a short exact sequence in $\C$ and $E$ is completely reducible, then $E \cong A \oplus B$ by the Krull-Schmidt Theorem and the short exact sequence splits. Finally, we prove (b)$\Rightarrow$(d). Any non-zero $V \in \C$ contains a simple subobject $S \le V$, which induces a short exact sequence $0 \rightarrow S \rightarrow V \rightarrow V/S \rightarrow 0$. By hypothesis, this short exact sequence splits and $V \cong (V/S)\oplus S$. Note that $\dim(V/S) < \dim(V)$, and so the result now follows from induction on dimension.
\end{proof} 

\begin{rmk} 
The notion of a split proto-exact category is studied in \cite{Eberhardt2022AKGW1, Eberhardt2022AKGW2}, which in the proto-abelian case coincides with what we have called split-semisimple. We note that in general, a split proto-exact category may contain objects which are not direct sums of their simple subobjects. For instance, let $M$ be a finite monoid with absorbing element. Then Lemma 2.4 of \cite{Eberhardt2022AKGW1} implies that $A$-$\operatorname{proj}$ (i.e. the full subcategory of finite, pointed, projective left $M$-actions) is proto-exact and split-semisimple. However, it is easy to see that if $M$ is the path monoid of the quiver $Q =a \xrightarrow[]{\alpha} b$ (see Example \ref{e.pathmonoids}), then $Mb = \{0, b, \alpha\} \in M$-$\operatorname{proj}$ contains the projective sub-action $\{0,\alpha\} \cong Ma$, which has no complement in $Mb$. 
\end{rmk}


\section{Representations of $\hat{G}$-Linear Monoids}\label{s.GLin}

We now let $M$ be a finite monoid, considered as a finite category with a single element. We wish to define a proto-abelian category $\Rep(M,\hat{G})$ which exhibits nice algebraic properties. To do this, it is wise to restrict both the class of monoids $M$ under consideration, as well as the functors $M \rightarrow \Vect_{\hat{G}}$ themselves. 

\begin{mydef} 
Let $G$ be a finite abelian group, and $M$ a finite monoid with absorbing element $0$. We say that $M$ is a \emph{$\hat{G}$-linear monoid} if $G$ is a subgroup of units of $M$ that commutes pointwise with the elements of $M$, and the action of $G$ on $M\setminus\{0_M\}$ via translation is free.
\end{mydef}   

Thus, a $\hat{G}$-linear monoid is in particular an object of $\Vect_{\hat{G}}$ in a natural way. The \emph{dimension} of a $\hat{G}$-linear monoid $M$ is its dimension as a $\hat{G}$-vector space. 

\begin{rmk} 
Let $M$ be a $\hat{G}$-linear monoid. Although $M$ is an object in $\Vect_{\hat{G}}$, it is not generally a monoid object in $\Vect_{\hat{G}}$: for instance, left and right translation by fixed elements of $M$ are not generally $\hat{G}$-linear maps $M\rightarrow M$. Our definition is chosen so that monoids such as $I_n(\hat{G})$ might be included within it.
\end{rmk}

\begin{mydef} 
A monoid $M$ is said to be an \emph{inverse monoid} if for all $x \in M$, there exists a unique $x^* \in M$ satisfying $x^*xx^* = x^*$ and $xx^*x = x$. We call $x^*$ the \emph{$*$-inverse of $x$.} More generally, an element $x$ of a monoid $M$ (not necessarily inverse) is said to be \emph{regular} if and only if there exists a $y \in M$ satisfying $xyx = x$. 
\end{mydef}  

We will need the following elementary facts about $*$-inverses and inverse monoids, see \cite{CliffPrest1961} for proofs: 

\begin{pro}\label{p.BasicInverses}
Let $M$ be a finite monoid. Then $M$ is an inverse monoid if and only if each element of $M$ is regular and all idempotents of $M$ commute. Furthermore, for all $a, b \in M$ we have the following: 
\begin{enumerate} 
\item $a^{**} = a$.
\item $(ab)^* = b^*a^*$. 
\item If $u$ is a unit in $M$, then $u^* = u^{-1}$. 
\item $a^*a$ and $aa^*$ are idempotents.
\end{enumerate}
\end{pro}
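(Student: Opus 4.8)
The plan is to treat this as a single package: the substantive content is the equivalence of being inverse with being regular-with-commuting-idempotents, and then to read off the four numbered identities as short consequences of \emph{uniqueness} of $*$-inverses, supplemented by commutativity of idempotents where needed. Throughout I would lean on the fact that the two defining relations $x^*xx^* = x^*$ and $xx^*x = x$ are symmetric in $x$ and $x^*$, so that ``being a $*$-inverse of'' is a symmetric relation and uniqueness is the only real lever.

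For the direction asserting that regular with commuting idempotents implies inverse, I would first produce a $*$-inverse and then prove it unique. Given $xyx = x$, the element $z = yxy$ satisfies $xzx = x$ and $zxz = z$ by the telescoping computations $xzx = (xyx)yx = x$ and $zxz = y(xyx)(yxy) = yxy = z$, giving existence. For uniqueness, suppose $b$ and $c$ are both $*$-inverses of $a$. The products $ab, ac, ba, ca$ are idempotent (e.g. $(ab)^2 = (aba)b = ab$), hence commute by hypothesis. I would then run $b = bab = b(aca)b = (ba)(ca)b = (ca)(ba)b = cab$ and its mirror $c = bac$, feed these back to obtain $ab = a(cab) = (ac)(ab) = (ab)(ac) = a(bac) = ac$, and finish with $b = bab = b(ab) = b(ac) = bac = c$.

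For the converse, assume $M$ is inverse. Two facts follow from uniqueness alone, with no reference to commutativity: every idempotent $e$ is its own $*$-inverse (since $e^3 = e$ exhibits $e$ as a $*$-inverse of $e$, forcing $e^* = e$), and $a^{**} = a$, since $a$ satisfies the two defining relations for a $*$-inverse of $a^*$; this last is item (1). Item (4) is the direct check $(a^*a)^2 = a^*(aa^*a) = a^*a$ and similarly for $aa^*$, and item (3) is the direct check that $u^{-1}$ satisfies both relations for $u$, hence equals $u^*$. To prove that idempotents commute, fix idempotents $e,f$ and set $x = (ef)^*$; I would verify (using only $e^2=e$, $f^2=f$, and $x(ef)x = x$, $(ef)x(ef)=ef$) that $fxe$ is idempotent and is itself a $*$-inverse of $ef$, so uniqueness gives $x = fxe$, whence $ef = x^* = x$ is idempotent by item (1). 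Symmetrically $fe$ is idempotent, and since $fe$ is a $*$-inverse of the self-inverse idempotent $ef$, uniqueness yields $ef = fe$. Item (2) then follows by checking that $b^*a^*$ is a $*$-inverse of $ab$: the decisive step is $(ab)(b^*a^*)(ab) = a(bb^*)(a^*a)b = a(a^*a)(bb^*)b = ab$, where the commuting idempotents $bb^*$ and $a^*a$ are transposed, together with the companion identity $(b^*a^*)(ab)(b^*a^*) = b^*a^*$.

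The main obstacle is the uniqueness half of the equivalence and, dually, the commutativity of idempotents in an inverse monoid: these are the only genuinely non-formal steps. Both turn on inserting a relation such as $aca = a$ at exactly the right spot to expose a product of two commuting idempotents, transposing that pair, and then collapsing with $bab = b$; arranging the bracketing so that a single transposition does the work is the delicate point. Once uniqueness and commutativity are secured, all four numbered identities reduce to one- or two-line substitutions using $aa^*a = a$, $a^*aa^* = a^*$, and idempotent commutativity.
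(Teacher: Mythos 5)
Your proposal is correct, and every computation in it checks out: the existence step ($z=yxy$), the uniqueness argument via the commuting idempotents $ab,ac,ba,ca$, the derivation of items (1), (3), (4) from uniqueness alone, the $fxe$ trick showing $ef$ is idempotent and then $ef=fe$, and the verification that $b^*a^*$ inverts $ab$ all go through exactly as you sketch them. Note, however, that the paper itself supplies no proof of this proposition: it explicitly defers to the reference \cite{CliffPrest1961}, where this is the classical characterization of inverse semigroups (regular with commuting idempotents). Your argument is essentially the standard one from that source, so you have in effect filled in the proof the paper chose to cite rather than reproduce; the only cosmetic slip is the phrase ``whence $ef = x^* = x$ is idempotent by item (1)'' --- item (1) justifies $x^* = ef$, while the idempotency of $x$ comes from the separate computation that $fxe$ is idempotent, but the chain of equalities you wrote is valid as stated.
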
 

Of course, $I_n(\hat{G})$ is a $\hat{G}$-linear inverse monoid for any finite abelian group $G$. We note the following version of the Wagner-Preston Theorem, adapted to suit the needs of this article. The proof is similar to that given for inverse monoids in \cite[Theorem 1.20]{CliffPrest1961}, but is included for the convenience of the reader.

\begin{mythm}[Wagner-Preston Theorem for $\hat{G}$-Linear Monoids, cf. Theorem 1.20 \cite{CliffPrest1961}]\label{t.WP}
Let $G$ be a finite abelian group, with $M$ a finite inverse $\hat{G}$-linear monoid of dimension $n$. Then there exists an injective monoid homomorphism $\phi_M: M \rightarrow \End_{\hat{G}}(M) \cong I_n(\hat{G})$ that satisfies $\phi_M(x^*) = \phi_M(x)^*$ and $\phi_M(gx) = g\phi_M(x)$ for all $x \in M$ and $g \in G$. 
\end{mythm}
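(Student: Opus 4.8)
The plan is to mimic the classical Wagner-Preston (Cayley) construction, where $M$ acts on itself by partial left translations, while carefully tracking the $\hat{G}$-linear structure. Since $M$ is an object of $\Vect_{\hat{G}}$ of dimension $n$, we have $\End_{\hat{G}}(M) \cong I_n(\hat{G})$ automatically once we fix a basis (a set of representatives for the non-trivial $G$-orbits of $M$), so the content is to produce the injective homomorphism $\phi_M : M \to \End_{\hat{G}}(M)$. For each $a \in M$, I would define $\phi_M(a)$ to be the partial translation determined by $a$: concretely, using Proposition \ref{p.BasicInverses}, the element $a^*a$ is an idempotent, and I would set the domain of $\phi_M(a)$ to be the subspace $a^*aM$ (those $x$ "fixed" by $a^*a$), sending $x \mapsto ax$ on that domain and $x \mapsto 0$ otherwise. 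The first task is to check that this genuinely lands in $\End_{\hat{G}}(M)$, i.e. that $x \mapsto ax$ restricted to $a^*aM$ is an $\hat{G}$-linear map of $M$ into itself — it is clearly $G$-equivariant since $G$ commutes pointwise with $M$, and the key point (the submonomiality/$\FF_1$-linearity) is that $ax = ay \neq 0$ with $x,y$ in the domain forces $\hat{G}x = \hat{G}y$, which follows by applying $a^*$ and using $a^*ax = x$ on the domain together with freeness.

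First I would verify the homomorphism property $\phi_M(ab) = \phi_M(a)\phi_M(b)$ by the standard domain-tracking argument familiar from inverse semigroup theory: one shows the domain of $\phi_M(a)\phi_M(b)$ agrees with the domain of $\phi_M(ab)$, using the fact that idempotents commute (Proposition \ref{p.BasicInverses}, and the inverse-monoid hypothesis) to match up the projections $a^*a$, $b^*b$, $(ab)^*(ab)$. Next I would establish injectivity: if $\phi_M(a) = \phi_M(b)$, then since $a^*a \in a^*aM$ is in the domain of $\phi_M(a)$ we get $a = a(a^*a) = \phi_M(a)(a^*a) = \phi_M(b)(a^*a)$, and a symmetric computation pins down $a = b$; here one leans on $aa^*a = a$ and the uniqueness of $*$-inverses. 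The two equivariance conditions are then comparatively routine: $\phi_M(gx) = g\phi_M(x)$ holds because $g$ is a central unit so $(gx)\cdot y = g(xy)$ and the domain is unchanged (as $(gx)^*(gx) = x^*g^*gx = x^*x$ using $g^* = g^{-1}$), and $\phi_M(x^*) = \phi_M(x)^*$ follows by identifying the $*$-inverse in $\End_{\hat{G}}(M) \cong I_n(\hat{G})$ with the reversed partial bijection, using $(ab)^* = b^*a^*$ and $x^{**} = x$ from Proposition \ref{p.BasicInverses}.

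The main obstacle I anticipate is not any single algebraic identity but the careful bookkeeping of domains as $\hat{G}$-subspaces: I must consistently interpret "the partial map defined on $a^*aM$" as an honest morphism in $\Vect_{\hat{G}}$ (extending by $0$ outside the domain) and confirm at each step that composition, inversion, and the translation maps respect both the $G$-orbit structure and the at-most-one-preimage ($\FF_1$-linearity) condition of Definition \ref{d.I_nG}. The delicate point is verifying that $x \mapsto ax$ never collapses two distinct orbits inside its domain, which is exactly where regularity ($aa^*a = a$, so $a^*$ provides a left inverse on the image) and the commuting-idempotents hypothesis are essential; once this is in hand, the map $\phi_M(a)$ is manifestly submonomial and everything else reduces to the standard inverse-monoid computations, now decorated with the central $G$-scalars that the freeness and centrality hypotheses allow to pass through cleanly.
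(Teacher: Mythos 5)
Your proposal is correct and follows essentially the same route as the paper's proof: the paper also defines $\phi_M(a)$ as the partial left translation $\lambda_a$ with domain $a^*M = a^*aM$ extended by zero, proves the homomorphism property by matching domains (using commuting idempotents), gets injectivity by evaluating at the idempotent $b^*b$ and invoking uniqueness of $*$-inverses, and handles $\phi_M(ga)=g\phi_M(a)$ via $(ga)^*=a^*g^{-1}$. The only cosmetic difference is that for $\phi_M(x^*)=\phi_M(x)^*$ the paper transfers the relations $aa^*a=a$, $a^*aa^*=a^*$ through $\phi_M$ and cites uniqueness of $*$-inverses in $I_n(\hat{G})$, rather than explicitly identifying $\phi_M(x^*)$ with the reversed partial bijection as you do.
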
 

\begin{proof}  
For all $a \in M$, define $\lambda_a : M \rightarrow M$ as 
\begin{center} 
$\lambda_a(x) = \begin{cases} 
ax& \text{ if $x \in a^*M = (a^*aM)$} \\ 
0 & \text{ else}
\end{cases}$ 
\end{center} 
Then by definition, $\lambda_a(0_M) = a0_M = 0_M$. If $g \in G$, then $x \in a^*M$ if and only if $gx \in a^*M$, because $g$ is invertible and central. Thus, $\lambda_a(gx) = g\lambda_a(x)$ for all $x \in M$. Furthermore, $\lambda_a(x) = \lambda_a(y) \neq 0$ implies that $ax = ay$ and $x,y \in a^*M$. Write $x = a^*z$ for some $z \in M$. Then $x = a^*z = (a^*aa^*)z = (a^*a)(a^*z) = (a^*a)x$, and similarly $y = (a^*a)y$. Therefore, $ax = ay$ implies $x = a^*ax = a^*ay = y$, and so $\lambda_a \in \End_{\hat{G}}(M)$ for all $a \in M$. It is clear that $\phi_M(1) = \lambda_1 = \id_M$ by the definition of $\lambda_1$. To prove that $\phi_M$ is a monoid homomorphism, suppose $a, b \in M$. Then on the one hand, 
\begin{center} 
 $[\lambda_a\circ \lambda_b](x) = \begin{cases} 
 abx & \text{ if $x \in b^*M$ and $bx \in a^*M$} \\ 
 0 & \text{ else} 
 \end{cases}$
\end{center} 
whereas 
\begin{center} 
$\lambda_{ab}(x) = \begin{cases} 
abx & \text{ if $x \in (ab)^*M$} \\ 
0 & \text{ else}
\end{cases}$
\end{center} 
Thus, we will have $\phi_M(ab) = \phi_M(a)\phi_M(b)$ if $x \in (ab)^*M$ if and only if $x \in b^*M$ and $bx \in a^*M$. If $x = (ab)^*z = b^*(a^*z)$, then immediately $x \in b^*M$. But also $a^*z = (a^*aa^*)z$ implies  
\begin{align*} 
bx & = b(b^*a^*z) \\ 
& = (bb^*)(a^*a)(a^*z) \\ 
& = (a^*a)(b^*b)(a^*z) \\
\end{align*} 
by Proposition \ref{p.BasicInverses}, which is clearly in $a^*M$. Conversely, suppose $x = b^*z$ and $bx = a^*w$ for some $z, w \in M$. Then 
\begin{align*} 
x & = b^*z \\ 
& = (b^*bb^*)z \\ 
& = b^*bx \\ 
& = b^*a^*w \\ 
& = (ab)^*w\\
\end{align*} 
from which we conclude $x \in (ab)^*M$. Thus, $\phi_M$ is a monoid homomorphism. But since $x \in a^*M$ if and only if $x \in (ga)^*M$ because $(ga)^* = a^*g^* = a^*g^{-1}$, it follows that $\lambda_{ga} = g\lambda_a$ for all $g \in G$, and thus $\phi_M(ga) = g\phi_M(a)$ as well. Furthermore, $a^*aa^* = a^*$ and $aa^*a = a$ imply $\phi_M(a^*)\phi_M(a)\phi_M(a^*) = \phi_M(a^*)$ and $\phi_M(a)\phi_M(a^*)\phi_M(a) = \phi_M(a)$. Since $\End_{\hat{G}}(M) \cong I_n(\hat{G})$ is an inverse monoid, we must have $\phi_M(a^*) = \phi_M(a)^*$ by the uniqueness of $*$-inverses. Finally, we verify that $\phi_M$ is injective. Suppose that $a, b \in M$ satisfy $\lambda_a = \lambda_b$. Since $b^*b \in b^*M$, we have 
\begin{align*} 
b & = bb^*b \\ 
& = \lambda_b(b^*b) \\ 
& = \lambda_a(b^*b) \\ 
& = ab^*b,
\end{align*} 
which in turn implies 
\begin{align*}  
b^* & = b^*bb^* \\  
 & = b^*(ab^*b)b^* \\ 
& = b^*a(b^*bb^*) \\ 
& = b^*ab^*
\end{align*} 
Similarly, we may conclude $a^* = a^*ba^*$, which in turn implies  
\begin{align*} 
a & = a^{**} \\ 
& = (a^*ba^*)^* \\ 
& = ab^*a,
\end{align*} 
which together imply $a = (b^*)^* = b$ by uniqueness of $*$-inverses. Thus, $\phi_M$ is injective, and the claim now follows.
\end{proof} 

The above theorem implies that in order to study $\hat{G}$-linear inverse monoids, it suffices to study submonoids of $I_n(\hat{G})$ that are closed under taking $*$-inverses. \\  

We now recall further notions from semigroup theory. 

\begin{mydef}\label{d.ReesFactor}
An \emph{ideal} of $M$ is a subset $I \subseteq M$ such that $aI \subseteq I$ and $Ia\subseteq I$ for all $a \in M$. Provided that $M$ has an absorbing element $0$, $I$ is said to be \emph{nilpotent} if $I^k = 0$ for suitably large $k$. For an ideal $I$ of $M$, we define an equivalence relation $\sim$ on $M$ by $x \sim y$ if and only if either $x = y$ or both $x$ and $y$ are in $I$. The equivalence class of $x \in M$ is denoted by $[x]$. The set of equivalence classes is denoted by $M/I$ and is called the \emph{Rees factor monoid} of $M$ by $I$. It is naturally a monoid with absorbing element, whose multiplication is given by 
\[ 
[a]\cdot [b] = \begin{cases} 
[ab] & \text{ if $a, b, ab \in M\setminus I$} \\ 
[0] & \text{ else} \\
\end{cases}
\]
\end{mydef}  

\begin{rmk}\label{r.ReesFactor} 
Let $I$ be an ideal of $M$. Note that if $M$ is finite or $\hat{G}$-linear, then so is the Rees factor monoid $M/I$. Note that the construction of Rees factor monoids also works for semigroups.
\end{rmk}

\begin{mydef}
For any $a \in M$, let $J(a) := MaM$ be the two-sided ideal of $M$ generated by $a$. Two elements $a, b \in M$ are said to be \emph{$\mathcal{J}$-equivalent} if and only if $J(a) = J(b)$. This is an equivalence relation on $M$, whose equivalence classes are referred to as \emph{$\mathcal{J}$-classes of $M$}. The $\mathcal{J}$-class of $a$ is denoted $\mathcal{J}_a$.
\end{mydef} 

Some elementary properties of the $\mathcal{J}$-equivalence follow.

\begin{pro}[Fact 2 \cite{GanMazStein2009}]\label{p.regular}
Let $M$ be a finite semigroup, and let $J$ be a $\mathcal{J}$-class of $M$. Then the following are equivalent: 
\begin{enumerate} 
\item $J$ contains an idempotent.
\item $J$ contains a regular element.
\item All elements of $J$ are regular. 
\item $J^2\cap J \neq \emptyset$.
\end{enumerate}
\end{pro}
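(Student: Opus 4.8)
The plan is to prove the four conditions equivalent by splitting the work into the block $(1)\Leftrightarrow(2)\Leftrightarrow(3)$, which is pure Green's theory, and the implication involving $(4)$, which requires an essential use of finiteness. Several implications are immediate and I would dispatch them first: $(1)\Rightarrow(2)$ because an idempotent $e$ satisfies $e = e\cdot e\cdot e$ and is therefore regular; $(3)\Rightarrow(2)$ because a $\mathcal{J}$-class is nonempty; and $(1)\Rightarrow(4)$ because an idempotent $e\in J$ gives $e = e^2\in J^2\cap J$. The substance then lies in $(2)\Rightarrow(1)$, $(2)\Rightarrow(3)$, and $(4)\Rightarrow(1)$.

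For $(2)\Rightarrow(1)$, given a regular $a\in J$ with $a = axa$ for some $x$, I would set $e = ax$ and check directly that $e^2 = (ax)(ax) = (axa)x = ax = e$, so $e$ is idempotent. Since $e = ax\in aM^1$ we have $e\le_{\mathcal{J}}a$, while $a = (ax)a = ea\in M^1 e$ gives $a\le_{\mathcal{J}}e$; hence $e\,\mathcal{J}\,a$, so $e$ is an idempotent of $J$. For $(2)\Rightarrow(3)$ I would first record the routine fact that regularity is inherited along single Green steps: if $a$ is regular and $a\,\mathcal{R}\,b$, writing $b = as$ and $a = bs'$ one verifies $b(s'x)b = b$ (using $a(ss') = a$); symmetrically, if $a\,\mathcal{L}\,b$, writing $b = ta$ and $a = t'b$ one verifies $b(xt')b = b$. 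Since $M$ is finite we have $\mathcal{J} = \mathcal{D}$, and any two $\mathcal{D}$-related elements $a,b$ are linked by a chain $a\,\mathcal{R}\,c\,\mathcal{L}\,b$; applying the two inheritance facts in turn propagates regularity from a single regular element to every element of $J$.

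The main obstacle is $(4)\Rightarrow(1)$, where finiteness enters through the \emph{stability} of finite semigroups. Assume $a,b\in J$ with $ab\in J$, so in particular $ab\,\mathcal{J}\,a$. Since $ab\le_{\mathcal{R}}a$ always holds, stability upgrades this $\mathcal{J}$-equivalence to $a\,\mathcal{R}\,ab$, producing $u\in M^1$ with $a = abu$. Setting $t = bu$ gives $at = a$, hence $at^n = a$ for all $n$; because $M$ is finite the cyclic semigroup $\langle t\rangle$ contains an idempotent, so $e := t^n$ is idempotent for suitable $n$ and $ae = a$. The key point is the placement of $e$: on one hand $e = (bu)^n\in M^1 b M^1$ gives $e\le_{\mathcal{J}}b$, and on the other $a = ae\in M^1 e$ gives $a\le_{\mathcal{J}}e$; combined with $a\,\mathcal{J}\,b$ this squeezes $e$ between two $\mathcal{J}$-equivalent elements, forcing $e\in J$ and yielding an idempotent in $J$, i.e.\ $(1)$.

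The one genuinely non-elementary input is stability itself — equivalently, the statement that in a finite semigroup two $\le_{\mathcal{J}}$-comparable, $\mathcal{J}$-equivalent elements are automatically $\mathcal{R}$- (respectively $\mathcal{L}$-) equivalent. This is exactly where the general-semigroup argument breaks down, and it is the step I expect to require the most care. In the writeup I would either invoke it as a standard property of finite semigroups (consistent with its role in \cite{GanMazStein2009}), or establish it by the usual argument that the descending chain of principal right ideals $aM^1\supseteq abM^1\supseteq ab^2M^1\supseteq\cdots$ must stabilize, whence $\mathcal{J}$-comparability collapses to $\mathcal{R}$-equality.
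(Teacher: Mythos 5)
The paper offers no proof of this proposition at all: it is imported verbatim as Fact 2 of \cite{GanMazStein2009}, and the text immediately proceeds to use it to define regular $\mathcal{J}$-classes. So there is no in-paper argument to compare against; what you have written is a self-contained proof of a fact the paper treats as a black box. Your argument is correct and follows the standard semigroup-theoretic route: the trivial implications $(1)\Rightarrow(2)$, $(3)\Rightarrow(2)$, $(1)\Rightarrow(4)$; the computation $e = ax$ idempotent with $e\,\mathcal{J}\,a$ for $(2)\Rightarrow(1)$; propagation of regularity along single $\mathcal{R}$- and $\mathcal{L}$-steps combined with $\mathcal{J}=\mathcal{D}$ in finite semigroups for $(2)\Rightarrow(3)$; and stability plus idempotent powers for $(4)\Rightarrow(1)$. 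The coverage of implications closes all four equivalences, and every displayed computation checks out (e.g. $b(s'x)b = ax b = (axa)s = as = b$, and in $(4)\Rightarrow(1)$ the squeeze $a \le_{\mathcal{J}} e \le_{\mathcal{J}} b$ with $a\,\mathcal{J}\,b$ forces $e \in J$).

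Two small points, neither a gap. First, in $(2)\Rightarrow(1)$ you write $a = ea \in M^1e$; the containment you actually get is $ea \in eM^1$ (so $a \le_{\mathcal{R}} e$), but the conclusion $a \le_{\mathcal{J}} e$ holds regardless since $a = 1\cdot e\cdot a \in M^1eM^1$. Second, your closing sketch for proving stability via stabilization of the chain $aM^1 \supseteq abM^1 \supseteq ab^2M^1 \supseteq \cdots$ does not work as stated: the chain may stabilize strictly below its top terms, and stabilization alone never uses the hypothesis $a\,\mathcal{J}\,ab$. The usual argument instead writes $a = u(ab)v$, iterates to $a = u^n a(bv)^n$, and chooses $n$ with $f := (bv)^n$ idempotent, whence $af = a$ and $a = (ab)\,v(bv)^{n-1} \in abM^1$. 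Since your primary plan is to quote stability as a standard property of finite semigroups (which is legitimate), this only affects the fallback remark; note also that running this idempotent-power computation directly inside $(4)\Rightarrow(1)$ lets you bypass stability entirely, since $f \in M^1bM^1$ and $a = af$ give the same squeeze $a \le_{\mathcal{J}} f \le_{\mathcal{J}} b\,\mathcal{J}\,a$.
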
 

A $\mathcal{J}$-class satisfying any of the four equivalent conditions above is called a \emph{regular} $\mathcal{J}$-class, and its elements are called \emph{regular elements}. The set of all $\mathcal{J}$-classes of $M$ is a poset under the following order: $\mathcal{J}_a \le_{\mathcal{J}} \mathcal{J}_b$ if and only if $J(a) \subseteq J(b)$. \\ 

\begin{mydef} 
Let $a \in M$, and set $I(a)$ to be the collection of all $x \in J(a)$ which do not generate $J(a)$. Then $I(a)$ is an ideal of $J(a)$, and the Rees factor semigroup $P(a) = J(a)/I(a)$ is called a \emph{principal factor of $M$}. Clearly, the principal factors of $M$ only depend on its $\mathcal{J}$-classes. A principal factor of $M$ is said to be \emph{regular} if it corresponds to a regular $\mathcal{J}$-class, i.e. if and only if it is generated by an idempotent.
\end{mydef}

If $S$ is a semigroup with $0$, then $S$ is said to be \emph{null} if $S^2 = 0$. $S$ is said to be $0$-simple if and only if $S^2 \neq 0$ and the only two ideals of $S$ are $S$ and $\{0\}$.  

\begin{pro}[Lemma 2.39 \cite{CliffPrest1961}]\label{p.PFactClass}
Let $M$ be a finite monoid with absorbing element. Then the principle factors of $M$ are either null or $0$-simple.
\end{pro}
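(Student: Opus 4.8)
The plan is to realize the principal factor $P(a) = J(a)/I(a)$ concretely as a Rees quotient and then run the standard dichotomy. As a set, $P(a)$ is $J \sqcup \{0\}$ with $J = \mathcal{J}_a$ (since $J(a)\setminus I(a)$ is exactly the set of elements that generate $J(a)$, i.e. those $\mathcal{J}$-equivalent to $a$), and for $x,y \in J$ the product is $xy$ if $xy \in J$ and $0$ otherwise, with $0$ absorbing. Thus $P(a)$ is a semigroup with zero whose nonzero elements form the single $\mathcal{J}$-class $J$. First I would dispose of the degenerate case: if $xy \notin J$ for all $x,y \in J$ --- equivalently $J^2 \cap J = \emptyset$ --- then every product in $P(a)$ vanishes, so $P(a)^2 = 0$ and $P(a)$ is null.

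In the remaining case $J^2 \cap J \neq \emptyset$, Proposition \ref{p.regular} tells us $J$ is a regular $\mathcal{J}$-class and $P(a)^2 \neq 0$; the goal is then to show $P(a)$ is $0$-simple, i.e. that its only ideals are $\{0\}$ and $P(a)$. I would take a nonzero ideal $K$, choose $b \in K \cap J$, and argue that every $c \in J$ lies in $K$, which forces $K = P(a)$. Since $b$ and $c$ are $\mathcal{J}$-equivalent in $M$, we have $MbM = J(a) = McM$, so $c = sbt$ for some $s,t \in M$; a short $\mathcal{J}$-order squeeze ($b \geq_{\mathcal{J}} sb \geq_{\mathcal{J}} sbt = c$, together with $c\,\mathcal{J}\,b$, and symmetrically for $bt$) shows that $sb$, $bt$, and $c$ all remain in $J$. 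The real content is then to promote this factorization in $M$ to one taking place inside $P(a)$, i.e. to exhibit $c$ in the principal ideal $P(a)^1\,b\,P(a)^1$.

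This last promotion is the main obstacle, and it is exactly where regularity is essential: the factors $s,t$ live in $M$, whereas the multiplication of $P(a)$ only sees $J \cup \{0\}$, so one must simultaneously rule out the products collapsing to $0$ and replace the outer factors by elements of $J$. I would handle it using the structure of a regular $\mathcal{J}$-class in a finite (hence stable) semigroup --- the existence of an idempotent in every $\mathcal{L}$-class and every $\mathcal{R}$-class of $J$, together with Green's Lemma --- which lets one rewrite the outer translations by $s$ and $t$ as multiplications by elements of $J$, with all intermediate products staying within $J$, and conclude $c \in P(a)\,b\,P(a)$. It is worth flagging that the null case already shows no purely formal shortcut can work: for a null $P(a)$ every subset containing $0$ is an ideal, so the preimage in $M$ of an ideal of $P(a)$ is generally not an ideal of $M$, and the $0$-simplicity genuinely relies on the regularity of $J$ rather than on an abstract ideal correspondence. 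Granting the promotion, $c \in K$ for every $c \in J$, whence $K = P(a)$ and $P(a)$ is $0$-simple, completing the dichotomy and reproducing Lemma 2.39 of \cite{CliffPrest1961}.
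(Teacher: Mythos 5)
The paper itself gives no proof of this proposition: it is quoted directly from Clifford--Preston (Lemma 2.39 of \cite{CliffPrest1961}), so your argument has to be judged on its own merits, and it holds up. The Rees-quotient realization of $P(a)$, the null case when $J^2\cap J = \emptyset$, and the $\mathcal{J}$-order squeeze showing $sb,\,bt \in J$ are all correct, and the step you flag as the ``real content'' closes exactly as you describe. To record it: since $J$ is regular (Proposition \ref{p.regular}, which uses finiteness), pick $z$ with $bzb = b$ and set $e = bz$, $f = zb$; these are idempotents with $eb = b = bf$, and both lie in $J$. Writing $c = sbt = (se)\,b\,(ft)$, put $u = se$ and $v = ft$. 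Then $ub = sb \in J$ and $ubv = sbt = c \in J$, and the same squeeze you used for $sb$ gives $u,v \in J$ (e.g.\ $J(u) \subseteq J(e) = J(b)$ while $J(b) = J(ub) \subseteq J(u)$). Hence $c = u\cdot b\cdot v$ computed inside $P(a)$, so any nonzero ideal containing $b$ contains all of $J$, and $P(a)^2 \neq 0$ then yields $0$-simplicity; Green's Lemma turns out not to be needed. One remark on scope: your route gets regularity of $J$ from $J^2 \cap J \neq \emptyset$ via Proposition \ref{p.regular}, which is a stability property of \emph{finite} semigroups, whereas Clifford--Preston's Lemma 2.39 holds for arbitrary semigroups and is proved without any appeal to regularity. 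Since the paper states the proposition only for finite monoids with absorbing element, this finiteness shortcut is entirely legitimate here, and it is arguably the natural proof given that the paper already quotes Proposition \ref{p.regular}; the trade-off is that your argument, unlike the original, does not generalize beyond the finite (or stable) setting.
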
  

The following lemma will be used in the next section: 

\begin{pro}\label{p.nilpotentcondition} 
Let $M$ be a finite monoid with absorbing element $0$, and let $I$ be a non-zero ideal of $M$. If $0$ is the only regular element of $I$, then $I$ is nilpotent.
\end{pro} 

\begin{proof} 
Note that $I$ is a finite union of $\mathcal{J}$-classes, say $I = \mathcal{J}_{a_1}\cup \cdots \cup \mathcal{J}_{a_k}$ for suitable $a_1,\ldots , a_k \in I$. Suppose by way of contradiction that $I$ is not nilpotent. First, we claim that $I^2 \subsetneq I$. Without loss of generality we may assume that $\mathcal{J}_{a_1}$ is maximal in $I$ with respect to the partial ordering $\le_{\mathcal{J}}$. By Proposition \ref{p.regular} no element of $J(a_1)^2$ generates $J(a_1)$, and hence $J(a_1)^2 \subsetneq J(a_1)$. If $I^2 = I$, then in particular there would exist $a, b \in I$ such that $a_1 = ab$. But $J(ab) \subseteq J(a)\cap J(b)$ implies $\mathcal{J}_{a_1} \le_{\mathcal{J}} \mathcal{J}_{a}$ and $\mathcal{J}_{a_1} \le_{\mathcal{J}} \mathcal{J}_b$, and hence $\mathcal{J}_{a_1} = \mathcal{J}_a = \mathcal{J}_b$ by maximality. In particular, $a, b \in J(a_1)$ and hence $a_1 = ab \in J(a_1)^2$, a contradiction. So $I^2 \subsetneq I$ as claimed. By assumption $I^2 \neq 0$, and thus we can repeat this argument with $I^2$ in place of $I$. We create a strictly decreasing chain of non-zero ideals 
\[ 
I \supsetneq I^2 \supsetneq I^4 \supsetneq \ldots \supsetneq I^{2^j} \supsetneq \ldots
\] 
contrary to the assumption that $M$ is finite. Thus, $I$ must be nilpotent.
\end{proof}

We now turn to the study of representations of $\hat{G}$-linear monoids. First, we need a suitable category of representations:

\begin{mydef} 
Suppose that $M$ is a $\hat{G}$-linear monoid. Define $\Rep(M,\hat{G})$ to be the category of functors $F : M \rightarrow \Vect_{\hat{G}}$ satisfying $F(0_M) = 0$ and $F(g) = $ scalar multiplication by $g$ for all $g \in G$. Note that this is compatible with the case $G = \{1\}$, as $F(1_M)$ is the identity map by functoriality. Objects of $\Rep(M,\hat{G})$ are called $\hat{G}$-linear representations of $M$.
\end{mydef}   

It is clear that $\Rep(M,\hat{G})$ is a full subcategory of $\Fun(M,\Vect_{\hat{G}})$, and that for any short exact sequence 
\[ 
0 \rightarrow B \rightarrow E \rightarrow A \rightarrow 0
\] 
in $\Fun(M,\Vect_{\hat{G}})$, we have $E \in \Rep(M,\hat{G})$ if and only if $A,B \in \Rep(M,\hat{G})$. From the observation at the start of the proof of Proposition \ref{p.verifyss}, $\Rep(M,\hat{G})$ is a finitary proto-abelian category that is closed under direct sums.

\begin{rmk} 
The requirement that elements of $G$ act as scalar multiples of the identity ensures that $\Rep(\hat{G},\hat{G})$ is semisimple, with a single simple object. The category $\Fun(\hat{G},\Vect_{\hat{G}})$ will in general contain one isomorphism class of simple object for each group homomorphism $G \rightarrow G$. 
\end{rmk}  

\begin{rmk} 
$\Rep(M,\hat{G})$ is closed under direct sums, sub-objects and quotient objects in $\Fun(M,\Vect_{\hat{G}})$. In particular, the Jordan-H\"older and Krull-Schmidt Theorems still hold in $\Rep(M,\hat{G})$.
\end{rmk}   

Let $J$ be a $\mathcal{J}$-class of $M$. Define $I_J = \{ x \in M \mid J\not\subseteq MxM\}$. Then $I_J$ is easily seen to be a two-sided ideal in $M$.  

\begin{mydef} 
Let $V \in \Rep(M,\hat{G})$. A regular $\mathcal{J}$-class $J$ is said to be the \emph{apex} of $V$ if and only if $\operatorname{Ann}_M(V) = I_J$. $J$ is the apex of $V$ if and only if it is the unique minimal $\mathcal{J}$-class (with respect to $\le_{\mathcal{J}}$) which does not annihilate $V$.
\end{mydef} 

We now develop a version of Clifford-Munn-Ponizovski\u i (CMP) Theory for $\hat{G}$-linear representations of monoids. To start, we verify some properties of simple $\hat{G}$-linear representations. The following proof, whose result is attributed to Munn and Ponizovski\u i, is essentially Theorem 5 in \cite{GanMazStein2009}. We reproduce it here for the convenience of the reader. 

\begin{pro}[Theorem 5 \cite{GanMazStein2009}]\label{p.apex} 
Let $V \in \Rep(M,\hat{G})$ be simple. Then $V$ has an apex. 
\end{pro}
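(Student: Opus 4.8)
The statement asserts that a simple $V \in \Rep(M,\hat{G})$ has an apex, i.e.\ there is a regular $\mathcal{J}$-class $J$ with $\operatorname{Ann}_M(V) = I_J$. The strategy is to identify the set of elements of $M$ that do \emph{not} annihilate $V$ and show it is ``upward closed enough'' to be the complement of some $I_J$, then to locate $J$ as the unique minimal $\mathcal{J}$-class acting non-trivially, and finally to verify that $J$ is regular. First I would observe that $\operatorname{Ann}_M(V) = \{ m \in M \mid F(m) = 0 \}$ is a two-sided ideal of $M$: it contains $0_M$, and if $m$ kills $V$ then so does $amb$ for any $a,b \in M$ by functoriality. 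Since $V \neq 0$ is simple, $1_M \notin \operatorname{Ann}_M(V)$, so the ideal is proper.

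The key step is to show that $\operatorname{Ann}_M(V)$ has a unique maximal $\mathcal{J}$-class lying just above it, equivalently that the set of $\mathcal{J}$-classes acting non-trivially on $V$ has a unique minimal element $J$. Here I would use simplicity crucially. For $m \in M$, the image $F(m)(V)$ is a subspace of $V$, and the union of images $\sum_{m \notin \operatorname{Ann}} F(m)(V)$ together with the action generates subobjects; because $V$ is simple, $M$ acts transitively enough that any two non-annihilating elements generate the same ideal in a controlled way. More precisely, I would argue that if $J_1, J_2$ are two distinct minimal non-annihilating $\mathcal{J}$-classes, one can build a proper nonzero subobject of $V$ (for instance the submodule generated by $F(x)(V)$ for $x$ in the ideal $I_{J_1}$, or its annihilator-type complement), contradicting simplicity. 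This forces uniqueness of the minimal $\mathcal{J}$-class $J$ not contained in $\operatorname{Ann}_M(V)$, and then $\operatorname{Ann}_M(V) = I_J$ follows from minimality together with the fact that $I_J$ is precisely the ideal of elements not generating $J$.

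The remaining point is that $J$ must be \emph{regular}. I would establish this by showing $J^2 \cap J \neq \emptyset$ and invoking Proposition \ref{p.regular}. If $J$ were null as a principal factor, then for $x \in J$ every product $xy$ with $y \in M$ would land in $I_J$, hence act as zero on $V$ modulo lower classes; combined with the minimality of $J$ this would force $F(x) = 0$ for all $x \in J$, i.e.\ $J \subseteq \operatorname{Ann}_M(V)$, contradicting the choice of $J$. By Proposition \ref{p.PFactClass} the principal factor $P(J)$ is null or $0$-simple, so ruling out the null case leaves $0$-simple, whence $J$ contains a regular element and is a regular $\mathcal{J}$-class.

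\textbf{Main obstacle.} The delicate part is the uniqueness argument for the minimal non-annihilating $\mathcal{J}$-class: translating simplicity of $V$ as a functor into a statement forcing a single minimal $\mathcal{J}$-class requires carefully producing a proper nonzero subobject from the supposed existence of two incomparable minimal classes, using that subobjects in $\Rep(M,\hat{G})$ are $G$-invariant $\FF_1$-subspaces stable under the $M$-action. I expect the bookkeeping with the ideals $I_J$ and the $\FF_1$-linear (hence partial-bijection-like) nature of the action of $M$ on $V$ to be where the real work lies, while the regularity of $J$ and the ideal-theoretic identities are comparatively routine given the cited propositions.
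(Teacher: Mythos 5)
Your overall architecture (observe $\operatorname{Ann}_M(V)$ is a proper ideal, locate a minimal non-annihilating $\mathcal{J}$-class, rule out the null case via Propositions \ref{p.PFactClass} and \ref{p.regular}) points in the right direction, but the two places where simplicity must actually be invoked are left as gestures, and one contains a false statement. The missing idea --- which is the entire content of the paper's proof --- is this: fix \emph{any} $\le_{\mathcal{J}}$-minimal class $J$ with $J \not\subseteq \operatorname{Ann}_M(V)$ and set $I = MJM$. Minimality forces every class in $I \setminus J$ to annihilate $V$, so $IV = JV \neq 0$ is a nonzero subobject, and simplicity gives $JV = V$. This single identity does all the work: since $I_J J \subseteq I \setminus J \subseteq \operatorname{Ann}_M(V)$, one gets $I_J V = I_J(JV) = (I_J J)V = 0$, which together with the easy inclusion $\operatorname{Ann}_M(V) \subseteq I_J$ yields $\operatorname{Ann}_M(V) = I_J$; uniqueness of the minimal non-annihilating class is then a free byproduct rather than something to be proved first. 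Your plan inverts this order: you make uniqueness the ``key step,'' but never produce the proper nonzero subobject you claim exists, and the candidates you name do not obviously work --- $I_{J_1}V$ is nonzero (it contains $J_2 V$) but showing it is \emph{proper} is exactly as hard as the original problem, and the vanishing of the annihilator-type complement $\{v \in V \mid I_{J_1}v = 0\}$ yields no contradiction by itself. (Uniqueness \emph{can} be proved directly: simplicity forces $\{v \in V \mid J_1 v = 0\} = 0$, so for $v \neq 0$ one finds $j_1 \in J_1$, $j_2 \in J_2$ with $j_2 j_1 v \neq 0$; then $\mathcal{J}_{j_2 j_1}$ is non-annihilating and lies below both $J_1$ and $J_2$, forcing $J_1 = J_2$ by minimality. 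But no argument of this kind appears in your proposal.)

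The regularity step has both a concrete error and a gap. The claim that for $x \in J$ ``every product $xy$ with $y \in M$ would land in $I_J$'' is false (take $y = 1_M$); nullness of the principal factor only gives $J \cdot J \subseteq I_J$, hence $J^2 V = 0$. And $J^2V = 0$ does not ``force $F(x) = 0$ for all $x \in J$'' by minimality alone: to pass from $J^2V = 0$ to the contradiction $JV = 0$ you must use simplicity again --- either via the paper's identity $V = JV$ (whence $V = J^2V = 0$), or by noting that $\{v \in V \mid Jv = 0\}$ is a subobject, proper since $JV \neq 0$, hence zero, while any nonzero element of $JV$ would lie in it. As written, your proposal defers every genuine use of simplicity to steps that are unjustified or incorrect, so it is a plan rather than a proof.
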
 

\begin{proof} 
Since $MV = V \neq 0$, there is a $\le_{\mathcal{J}}$-minimal $\mathcal{J}$-class $J$ such that $J \not\subseteq \operatorname{Ann}_M(V)$. Let $I = MJM$ be the two-sided ideal of $M$ generated by $J$. Note that $I$ is a union of $\mathcal{J}$-classes: indeed, if $x \in I$ and $MxM = MyM$, then $y \in MxM \subseteq I$. In particular, $I\setminus J$ is also a union of $\mathcal{J}$-classes. If $x \in I\setminus J$, then by definition of $I$ we have $x = \lambda j \rho$ for some $\lambda, \rho \in M$ and $j \in J$. But then $MxM \subseteq MjM$ implies $\mathcal{J}_x \le_{\mathcal{J}} J$. Since $\mathcal{J}_x \neq J$, this implies that $x \in \operatorname{Ann}_M(V)$, and hence $I\setminus J \subseteq \operatorname{Ann}_M(V)$. But then $IV = JV \neq 0$, and so $IV = V$ by simplicity. However, $I_JJ \subseteq I\setminus J \subseteq \operatorname{Ann}_M(V)$ by definition, so it follows that $I_JV = 0$. Furthermore, $xV = 0$ clearly necessitates $J \not\subseteq MxM$, and so we conclude $\operatorname{Ann}_M(V) = I_J$. If $J$ is not regular, then Proposition \ref{p.regular} implies $J^2 \subseteq I\setminus J$ and hence $V = JV = J^2V = 0$, a contradiction. Thus $J$ is regular, and so is an apex for $V$.
\end{proof}  

To define induction functors on categories of $\hat{G}$-linear representations, we make the following definition. 

\begin{mydef} 
Let $M$ be a finite $\hat{G}$-linear monoid. We say that $M$ is \emph{left (resp. right) inductive} if each regular principal factor of $M$ is a $\hat{G}$-linear representation of $M$ under left (resp. right) translation. 
\end{mydef}  

Note that there exist left-inductive monoids that are not right-inductive, and vice-versa (cf. Example \ref{e.LRCE}).

\begin{pro} 
Let $M$ be a finite, $\hat{G}$-linear, left inductive monoid, and let $e$ be a non-zero idempotent of $M$. If $P(e)$ is the associated regular principal factor of $M$, then $\mathcal{J}_e$ is an apex for $P(e)$ and the submodule $P(e)e$.
\end{pro}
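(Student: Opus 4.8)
The plan is to compute the annihilators $\operatorname{Ann}_M(P(e))$ and $\operatorname{Ann}_M(P(e)e)$ directly and show that both equal $I_J$, where $J = \mathcal{J}_e$. Since $e$ is a non-zero idempotent, $J$ is a regular $\mathcal{J}$-class, so it is eligible to serve as an apex; and left-inductivity guarantees that $P(e)$, being a regular principal factor, is an object of $\Rep(M,\hat{G})$, so its apex is well-defined. Two structural facts will be used throughout. First, for any $V \in \Rep(M,\hat{G})$ the annihilator $\operatorname{Ann}_M(V) = \{x \in M : xV = 0\}$ is a two-sided ideal of $M$, since $(mx)V = m(xV) = 0$ and $(xm)V \subseteq xV = 0$. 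Second, as the two-sided ideal $MeM$, the set $J(e)$ decomposes as the disjoint union $J \sqcup I(e)$, where $I(e) = \{x \in J(e) : \mathcal{J}_x <_{\mathcal{J}} J\}$ and $J$ consists precisely of the elements of $J(e)$ that generate it.

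First I would establish $I_J \subseteq \operatorname{Ann}_M(P(e))$. The non-zero elements of $P(e) = J(e)/I(e)$ are the classes $[p]$ with $p \in J$, and $x$ acts by $x[p] = [xp]$. Fix $x \in I_J$, so that $J \not\subseteq MxM$, and fix $p \in J$. Since $J(e)$ is a left ideal containing $p$, we have $xp \in J(e) = J \sqcup I(e)$. If $xp$ lay in $J$, then $J \subseteq M(xp)M \subseteq MxM$, contradicting $x \in I_J$; hence $xp \in I(e)$ and $x[p] = 0$. As $p \in J$ was arbitrary, $x \in \operatorname{Ann}_M(P(e))$.

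For the reverse inclusion I would use the ideal structure rather than argue element by element. The pivotal observation is that $e$ does not annihilate $P(e)$: indeed $e[e] = [e^2] = [e] \neq 0$. Now suppose some $x \notin I_J$ belonged to $\operatorname{Ann}_M(P(e))$. Then $J \subseteq MxM$, and because $\operatorname{Ann}_M(P(e))$ is a two-sided ideal containing $x$ we get $MxM \subseteq \operatorname{Ann}_M(P(e))$, whence $e \in J \subseteq \operatorname{Ann}_M(P(e))$, a contradiction. Therefore $\operatorname{Ann}_M(P(e)) \subseteq I_J$, and combined with the previous step $\operatorname{Ann}_M(P(e)) = I_J$; that is, $J$ is the apex of $P(e)$.

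Finally, for $P(e)e$ I would first confirm that it is a genuine non-zero subobject: right multiplication by $e$ descends to $P(e)$ since $I(e)$ is closed under right multiplication by $M$, the set $P(e)e = \{ve : v \in P(e)\}$ contains $0$ and is closed under the $G$-action ($g(ve) = (gv)e$) and the $M$-action ($m(ve) = (mv)e$), and it is non-zero because $[e] = [e]e \in P(e)e$. The apex computation then proceeds identically: $P(e)e \subseteq P(e)$ yields $I_J = \operatorname{Ann}_M(P(e)) \subseteq \operatorname{Ann}_M(P(e)e)$, while $[e] \in P(e)e$ with $e[e] = [e] \neq 0$ shows $e \notin \operatorname{Ann}_M(P(e)e)$, so the same two-sided-ideal argument forces $\operatorname{Ann}_M(P(e)e) = I_J$. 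I expect the only genuine subtlety to be the bookkeeping around $P(e)e$ — verifying that right multiplication by $e$ is well-defined on the quotient and that $P(e)e$ really is a subobject in $\Rep(M,\hat{G})$ — since the annihilator equalities themselves reduce, pleasantly, to the single fact that $e$ fixes its own class $[e]$.
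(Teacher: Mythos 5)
Your proof is correct, and its overall skeleton --- showing $\operatorname{Ann}_M(P(e)) = I_J$ by two inclusions, with left-inductivity guaranteeing $P(e) \in \Rep(M,\hat{G})$ and the idempotent $e$ guaranteeing regularity of $J$ --- matches the paper's. The genuine difference is in the inclusion $I_J \subseteq \operatorname{Ann}_M(P(e))$: the paper invokes the classification of principal factors (Proposition \ref{p.PFactClass}), noting that $P(e)$ is $0$-simple because $P(e)^2 \neq 0$, so the ideal of $P(e)$ generated by $xj$ is either $0$ or all of $P(e)$, and the latter is ruled out since it would force $J(e) \subseteq J(x)$; you instead argue directly from the decomposition $J(e) = J \sqcup I(e)$ that $xp \in J$ would give $J \subseteq M(xp)M \subseteq MxM$, contradicting $x \in I_J$. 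Your route is more elementary --- it bypasses the null/$0$-simple dichotomy entirely, in effect re-proving inline the small fragment of it that is actually needed. The reverse inclusion (annihilators are two-sided ideals, plus $e[e] = [e] \neq 0$) is the same argument in both, though the paper states it very tersely. Finally, where the paper dismisses $P(e)e$ with ``the claim follows from a similar argument,'' you handle it explicitly and more efficiently: the containment $P(e)e \subseteq P(e)$ gives $I_J = \operatorname{Ann}_M(P(e)) \subseteq \operatorname{Ann}_M(P(e)e)$ for free, and the observation that $[e] = [e]e \in P(e)e$ with $e[e] \neq 0$ gives the reverse via the same ideal argument, so nothing needs to be repeated. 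Your bookkeeping that $P(e)e$ is a genuine non-zero subobject of $P(e)$ in $\Rep(M,\hat{G})$ is also a point the paper leaves implicit.
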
 

\begin{proof} 
We prove the claim for $P(e)$, the claim for $P(e)e$ follows from a similar argument. To begin, define $J = \mathcal{J}_e$, and suppose $x \in \operatorname{Ann}_M(P(e))$. Since $eP(e) \neq 0$, it follows that $J \not\le \mathcal{J}_x$, and hence $\operatorname{Ann}_M(P(e)) \subseteq I_J$. Conversely, suppose $x \in I_J$. Then $J(e) \not\subseteq J(x)$. If $j \in P(e)$, then $J(xj)$ is an ideal in $P(e)$. Since $e \neq 0$, $P(e)^2 \neq 0$ and thus $P(e)$ is $0$-simple. Therefore, either $J(xj) = 0$ or $J(xj) = P(e)$. But $J(xj) = P(e)$ implies that $J(e) = J(xj) \subseteq J(x)$, a contradiction. Thus $J(xj) = 0$ and in particular $xj = 0$. Since $j$ was an arbitrary element of $P(e)$ it follows that $x \in \operatorname{Ann}_M(P(e))$ and thus $\operatorname{Ann}_M(P(e)) = I_J$. 
\end{proof}  
 
Let $e \in M$ be an idempotent with $J = \mathcal{J}_e$ its associated regular $\mathcal{J}$-class. The set $eMe\cap J$ is a group under multiplication with identity element $e$. Up to isomorphism, this group only depends on $J$, and thus it is referred to as $G_J$ \cite[Corollaries 1.12, 1.16]{Steinberg2016}.

\begin{pro} 
Let $M$ be a finite, $\hat{G}$-linear, left inductive monoid, and let $e$ be a non-zero idempotent of $M$. Then there exists a positive integer $d$ such that $P(e)e \cong \hat{G}_J^{\oplus d}$ as right $\hat{G}$-linear $\hat{G}_J$-representations.
\end{pro}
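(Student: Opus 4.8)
The plan is to reduce the statement to the structure theory of finite $0$-simple semigroups and then to check compatibility with the $\hat{G}$-scalars. Since $e$ is a nonzero idempotent, its image in $P(e)$ is a nonzero idempotent, so $P(e)^2 \neq 0$; by Proposition \ref{p.PFactClass} the principal factor $P(e)$ is therefore $0$-simple, and being finite it is, by the Rees theorem (see \cite{CliffPrest1961, Steinberg2016}), isomorphic to a Rees matrix semigroup over its structure group, which is exactly the maximal subgroup $G_J = eMe\cap J$. I would fix Rees coordinates, writing the nonzero elements of $P(e)$ as triples $(r,g,\ell)$ with $r$ ranging over an index set for the $\mathcal{R}$-classes of $J$, $\ell$ over the $\mathcal{L}$-classes, and $g \in G_J$, normalized so that $e = (r_0,1,\ell_0)$ and the sandwich entry $p_{\ell_0 r_0} = 1$.

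In these coordinates the identification of $P(e)e$ is immediate: right multiplication by $e$ sends $(r,g,\ell)$ to $(r,g\,p_{\ell r_0},\ell_0)$ when $p_{\ell r_0}\neq 0$ and to $0$ otherwise, so the image $P(e)e$ is precisely the $\mathcal{L}$-class $L_e$ of $e$ together with $\{0\}$, with $e$ acting as the identity on $L_e$. Next I would analyze the right action of $\hat{G}_J = H_e\cup\{0\}$, where $H_e = \{(r_0,g,\ell_0) : g \in G_J\}$. A direct computation gives $(r,h,\ell_0)(r_0,g,\ell_0) = (r,hg,\ell_0)$, so the orbits of this free right $G_J$-action on $L_e$ are exactly the $\mathcal{H}$-classes contained in $L_e$, one for each $\mathcal{R}$-class of $J$ (equivalently, this follows directly from Green's relations without coordinates). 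Setting $d$ equal to the number of $\mathcal{R}$-classes, each orbit is a copy of the right regular representation $\hat{G}_J$, whence $P(e)e \cong \hat{G}_J^{\oplus d}$ as right $\hat{G}_J$-acts. That these maps are morphisms in $\Rep(\hat{G}_J,\hat{G})$ is automatic, since right multiplication by any element of $G_J$ is a bijection of $L_e\cup\{0\}$ and hence $\hat{G}$-linear.

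The step requiring the most care — and where the left-inductive hypothesis enters — is verifying that this is an isomorphism of $\hat{G}$-linear representations, i.e. that it respects the central scalar action. Left-inductivity guarantees that $P(e)$ lies in $\Rep(M,\hat{G})$, so that $G$ acts on $P(e)$ by scalar multiplication; as $P(e)e$ is visibly $G$-invariant, it is a free $\hat{G}$-act and a genuine subobject in $\Vect_{\hat{G}}$. The key reconciliation is that the central copy of $G$ embeds into $G_J$ via $g\mapsto ge$ (an injective homomorphism, by freeness of the $G$-action on $M\setminus\{0\}$), and, because $G$ is central, left scalar multiplication by $g$ agrees with right multiplication by $ge$: for $ye \in P(e)e$ one has $g\cdot(ye) = gye = y(ge) = (ye)(ge)$, the last equality using $ege = ge$. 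Thus the $\hat{G}$-scalar structure on $P(e)e$ is carried by its right $\hat{G}_J$-module structure in exactly the way it is on $\hat{G}_J^{\oplus d}$, so the semigroup isomorphism above is simultaneously an isomorphism in $\Rep(\hat{G}_J,\hat{G})$. I expect the only real subtlety to be bookkeeping: choosing the Rees coordinates compatibly with the $\hat{G}$-action and confirming that the identification $L_e \cong \coprod_r G_J$ intertwines the two descriptions of the scalars.
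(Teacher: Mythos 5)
Your proof is correct, but it takes a genuinely different route from the paper's. The paper's argument is a short direct computation avoiding all structure theory: given $0 \neq j \in P(e)e$ and $\alpha \in G_J$ with $j\alpha = j$, $0$-simplicity of $P(e)$ yields $e = ljr$ for some $l,r \in M$, one checks $lj \in eMe \cap J = G_J$, and then $lj\alpha = lj$ forces $\alpha = e$ by cancellation in the group $G_J$; freeness of the right $G_J$-action on $P(e)e\setminus\{0\}$ then gives the orbit decomposition into copies of $\hat{G}_J$. You instead invoke the Rees structure theorem for finite $0$-simple semigroups and verify everything in normalized Rees coordinates; your computations are correct (in particular $P(e)e = L_e \cup \{0\}$, and the $G_J$-orbits are the $\mathcal{H}$-classes of $L_e$, one per $\mathcal{R}$-class). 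What your approach buys is information the paper does not state: an explicit value of $d$ (the number of $\mathcal{R}$-classes of $J$) and a concrete coordinate model of the isomorphism. What it costs is exactly the machinery the paper is deliberately avoiding --- the author remarks before Lemma \ref{l.LIE} that direct proofs are preferred over appeals to Rees/Brandt structure theory ``to keep the prerequisites of this article to a minimum.'' Your reconciliation of the scalar structure (the embedding $g \mapsto ge$ of $G$ into $G_J$, and the identity $g\cdot(ye) = (ye)(ge)$ from centrality) is the right argument for the point the paper compresses into its opening sentence ``it is clear that $P(e)e$ is a right $\hat{G}$-linear $\hat{G}_J$-representation.''

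One mis-attribution worth flagging: you claim the left-inductive hypothesis is what makes the $\hat{G}$-linear structure work. It is not actually needed anywhere in this proposition. $P(e)$ is automatically a free $\hat{G}$-act, since $J(e)$ and $I(e)$ are stable under the central $G$-action and $G$ acts freely on $M\setminus\{0\}$; and right translation by $\alpha \in G_J$ is automatically injective on nonzero elements of $P(e)e$ (cancel by $\alpha^{-1}$), hence $\hat{G}$-linear. Left-inductivity concerns left translation by arbitrary elements of $M$ and is a standing hypothesis for the surrounding development (the induction functor), but the paper's own proof never uses it either. This is harmless --- invoking a standing hypothesis is never wrong --- but the scalar compatibility you were worried about follows from centrality alone, with no bookkeeping about choosing Rees coordinates compatibly with the $\hat{G}$-action.
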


\begin{proof}
It is clear that $P(e)e$ is a right $\hat{G}$-linear $\hat{G}_J$-representation. Suppose that $0 \neq j \in P(e)e$ and $\alpha \in G_J$ such that $j\alpha = j$. By simplicity, $MeM = MjM$ and hence $e = ljr$ for some $l, r \in M$. In particular, $lj \in eMe \cap J = G_J$, and so $j\alpha = j$ implies $lj\alpha = lj$. Thus, $\alpha = e$. It follows that $P(e)e\setminus 0$ is a free right $G_J$-set.  
\end{proof}

If $W \in \Rep(\hat{G}_J,\hat{G})$, we define $W{\uparrow_e}$ to be the tensor product
\[ 
W{\uparrow_e} = P(e)e\otimes_{\hat{G}_J}W
\] 
In other words, $W{\uparrow_e}$ is the pointed set $P(e)e\otimes_{\hat{G}}W/\sim$, where $\sim$ is the equivalence generated by $x\alpha \otimes w = x\otimes \alpha w$ for all $x \in P(e)e$, $\alpha \in \hat{G}_J$, and $w \in W$. 

\begin{lem}\label{l.ind} 
Let $M$ be a finite, $\hat{G}$-linear, left inductive monoid. Then for all $W \in \Rep(\hat{G}_J,\hat{G})$, $W{\uparrow_e}$ is a $\hat{G}$-linear representation of $M$ under the action $a(x\otimes w) = (ax)\otimes w$.
\end{lem}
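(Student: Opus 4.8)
The plan is to verify, in turn, that the formula $a(x\otimes w) = (ax)\otimes w$ defines a functor $M \to \Vect_{\hat{G}}$ lying in $\Rep(M,\hat{G})$. I would organize everything around the fact that $P(e)e$ is naturally an $(M,\hat{G}_J)$-bimodule: it carries the left $M$-action inherited from the left-translation representation $P(e)$ (here left-inductivity is exactly what guarantees this left action is $\hat{G}$-linear, and the preceding results give that $P(e)e$ is a left $M$-submodule which is free as a right $\hat{G}_J$-module), together with the right $\hat{G}_J$-action by multiplication, the two commuting by associativity in $M$. Since left multiplication by a fixed $a$ is then a $\hat{G}$-linear endomorphism of $P(e)e$ commuting with the right $\hat{G}_J$-action, the assignment $x\otimes w \mapsto (ax)\otimes w$ first descends through the $\hat{G}$-balanced tensor $P(e)e\otimes_{\hat{G}}W$ and then through the $\hat{G}_J$-balanced quotient $W{\uparrow_e}$. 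Functoriality ($F(ab)=F(a)F(b)$, $F(1)=\id$, $F(0_M)=0$) is then immediate from associativity and the fact that $1$ and $0_M$ act as the identity and zero on $P(e)e$.

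The first structural point I would isolate is the identity $gx = x(ge)$ with $ge\in G_J$, valid for every $g\in G$ and $x\in P(e)e$: one checks $x(ge) = (xg)e = (gx)e = g(xe) = gx$ using centrality of $G$ and $xe=x$, while $ege = ge$ shows $ge\in eMe$ and $ge\in\mathcal{J}_e = J$ since $g$ is a unit, so $ge\in eMe\cap J = G_J$. Because $\hat{G}$ embeds into $\hat{G}_J$ via $g\mapsto ge$ and $W\in\Rep(\hat{G}_J,\hat{G})$ forces $ge$ to act on $W$ as the scalar $g$, this identity yields
\[
g(x\otimes w) = (gx)\otimes w = x\otimes (ge)w = x\otimes gw .
\]
Thus the left $G$-action on $W{\uparrow_e}$ is diagonal on the $W$-factor, which simultaneously shows that $F(g)$ is scalar multiplication by $g$ and that the action is free: if $x\otimes w = g(x\otimes w) = x\otimes gw$ with $x\otimes w\neq 0$, then $gw=w$ and freeness of $W$ gives $g=1$. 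Combined with the finiteness of $P(e)e\otimes W$, this establishes $W{\uparrow_e}\in\Vect_{\hat{G}}$.

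The main obstacle, and the step where left-inductivity is genuinely used, is showing that each $F(a)$ is a morphism in $\Vect_{\hat{G}}$, i.e. that it is injective on $\hat{G}$-orbits; equivalently, I must rule out that $a$ collapses two distinct right $\hat{G}_J$-orbits of $P(e)e$. Suppose $(ax_1)\otimes w_1 = (ax_2)\otimes w_2 \neq 0$. Freeness of $P(e)e$ as a right $\hat{G}_J$-module turns this into $ax_2 = (ax_1)\gamma = a(x_1\gamma)$ for some $\gamma\in G_J$, together with $w_1 = \gamma w_2$. Now I invoke left-inductivity: since $a$ acts $\hat{G}$-linearly on $P(e)$, hence on the subrepresentation $P(e)e$, the equality $a(x_1\gamma) = ax_2\neq 0$ forces $x_1\gamma = g x_2$ for some $g\in G$, whence $x_2 = x_1(g^{-1}e)\gamma$ lies in the same right $\hat{G}_J$-orbit as $x_1$. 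Feeding this back through the diagonal description of the scalar action shows $x_2\otimes w_2 = x_1\otimes (g^{-1}e)\gamma w_2 = x_1\otimes g^{-1}w_1 = g^{-1}(x_1\otimes w_1)$, so the two elements share a common $\hat{G}$-orbit, as required. I expect this no-collapsing argument to be the crux: without left-inductivity, left multiplication by $a$ can merge distinct right $\hat{G}_J$-orbits, and then $F(a)$ fails to be $\hat{G}$-linear.
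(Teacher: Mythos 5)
Your proof is correct and follows essentially the same route as the paper's: check that the $G$-action $g\cdot(x\otimes w)=(gx)\otimes w$ is well defined and free (via freeness of $P(e)e$ as a right $G_J$-set and the identity $gx = x(ge)$ with $ge \in G_J$ acting on $W$ as the scalar $g$), then use left-inductivity in the crucial no-collapse step showing each $a \in M$ acts by a morphism of $\Vect_{\hat{G}}$. If anything, your treatment of that step is slightly more careful than the paper's, which writes ``$x = y\alpha$'' where left-inductivity strictly only yields $x = g\,y\alpha$ for some $g\in G$; your version correctly concludes equality of $\hat{G}$-orbits, $x\otimes w = g(y\otimes z)$, which is exactly what $\hat{G}$-linearity of the action map requires.
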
 

\begin{proof} 
As $(ga)\otimes w = (ga)\alpha \otimes \alpha^{-1}w$ for all $\alpha \in G_J$, $g\cdot (a\otimes w) = (ga)\otimes w$ is a $G$-action on $W{\uparrow_e}$. Furthermore, $a\otimes gw = ga \otimes w = a\otimes w \neq 0$ means $a = a\alpha$ and $gw = \alpha^{-1}w$ for some $\alpha \in G_J$. By the previous proposition, $\alpha = e$ and so $gw = w \neq 0$, from which it follows that $g = 1$. Thus, $W{\uparrow_e}$ is a $\hat{G}$-vector space. If $a(x\otimes w) = a(y\otimes z) \neq 0$, then there exists $\alpha \in G_J$ such that $ax = ay\alpha \neq 0$ and $ w = \alpha^{-1}z \neq 0$. Since $M$ is left inductive, $x = y\alpha$ and so $x\otimes w = y\alpha \otimes \alpha^{-1}z= y\otimes z$. Hence, $W{\uparrow_e} \in \Rep(M,\hat{G})$.
\end{proof}

\begin{mydef} 
Let $M$ be a finite, $\hat{G}$-linear, left inductive monoid. Let $e$ be a non-zero idempotent in $M$, and let $J = \mathcal{J}_e$ be the associated regular $\mathcal{J}$-class. Then \emph{induction along $e$} is the functor 
\[ 
(-){\uparrow_e}: \Rep(\hat{G}_J,\hat{G}) \rightarrow \Rep(M,\hat{G})
\] 
which carries $W \in \Rep(\hat{G}_J,\hat{G})$ to the representation $W{\uparrow_e}$ and $f : V \rightarrow W$ to the morphism  
\[
f{\uparrow_e} : V{\uparrow_e}\rightarrow W{\uparrow_e} 
\]
 defined by $[f{\uparrow_e}](x\otimes v) = x\otimes f(v)$.
\end{mydef} 

\begin{mydef} 
Let $M$ be a finite $\hat{G}$-linear monoid. Fix a non-zero idempotent $e$ in $M$, and let $J = \mathcal{J}_e$ be its associated regular $\mathcal{J}$-class. Then \emph{restriction along $e$} is the functor 
\[ 
(-){\downarrow_e}: \Rep (M,\hat{G}) \rightarrow \Rep(\hat{G}_J,\hat{G})
\] 
which carries $W \in \Rep(M,\hat{G})$ to the representation $W{\downarrow_e} = eW$ and a morphism $f : V\rightarrow W$ to the morphism  
\[
f{\downarrow_e} : V{\downarrow_e} \rightarrow W{\downarrow_e} 
\] 
defined by $f{\downarrow_e} = f\mid_{eV}$.
\end{mydef}  

The Clifford-Munn-Ponizovski\u i Theorem for $\hat{G}$-linear monoids follows below. Our proof is based on \cite[Theorem 7]{GanMazStein2009}, which works for coefficients taking values in an arbitrary commutative ring. Their result was later generalized to representations over commutative semirings in \cite{IzhRhoStein2011}. Note that our result is not a special case of the latter, as our category of representations is more restrictive than the usual notion.

\begin{mythm}[Clifford-Munn-Ponizovski\u i Theorem for $\hat{G}$-Linear Monoids, cf. Theorem 7 \cite{GanMazStein2009}]\label{t.CMP}
Let $M$ be a finite, $\hat{G}$-linear, left inductive monoid. Fix a non-zero idempotent $e \in M$, and let $J = \mathcal{J}_e$ be its associated regular $\mathcal{J}$-class. Then the following hold: 
\begin{enumerate}  
\item If $V \in \Rep(\hat{G}_J,\hat{G})$ is simple, then  
\[ 
N(V{\uparrow_e}) := \{x\otimes w \in V{\uparrow_e} \mid eM(x\otimes w) = 0 \} 
\] 
is the unique maximal submodule of $V{\uparrow_e}$. Furthermore, the simple representation $Q(V) = V{\uparrow_e}/N(V{\uparrow_e})$ has apex $J$ and $Q(V){\downarrow_e} \cong V$ in $\Rep(\hat{G}_J,\hat{G})$.  
\item If $V \in \Rep(M,\hat{G})$ is simple with apex $J$, then $W= V{\downarrow_e}$ is simple in $\Rep(\hat{G}_J,\hat{G})$. Furthermore,
\[ 
W{\uparrow_e}/N(W{\uparrow_e}) \cong V
\] 
in $\Rep(M,\hat{G})$.
\end{enumerate} 
In particular, there is a bijection between simple objects in $\Rep(M,\hat{G})$ with apex $J$ and simple objects in $\Rep(\hat{G}_J,\hat{G})$.
\end{mythm}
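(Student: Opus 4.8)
The plan is to reduce everything to two structural facts about the interplay of $(-){\uparrow_e}$ and $(-){\downarrow_e}$, and then feed them into a standard lattice-of-submodules argument. The first fact I would establish is that restriction undoes induction: for any $W \in \Rep(\hat{G}_J,\hat{G})$ one has $(W{\uparrow_e}){\downarrow_e} \cong W$. This rests on the computation $eP(e)e = \hat{G}_J$ inside the principal factor $P(e)$, since for $x \in P(e)e$ the product $exe$ lies in $eMe$ and is therefore either in $eMe\cap J = G_J$ or drops below $J$ and dies in $P(e)$; concretely the $\hat{G}_J$-equivariant map $w \mapsto e\otimes w$ identifies $W$ with $e(W{\uparrow_e})$. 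The second fact is that $V{\uparrow_e}$ is generated as an $M$-module by its $e$-fixed part $e(V{\uparrow_e})$: every generator $x\otimes w$ with $0\neq x \in P(e)e\cap J$ satisfies $x = xe$, so $x\otimes w = x\cdot(e\otimes w)$ with $e\otimes w \in e(V{\uparrow_e})$. Establishing these two facts carefully at the level of pointed $\hat{G}$-sets (rather than by linear algebra) is where I expect the real work to lie.

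With these in hand, Part (1) is almost formal. First $N := \{u \in V{\uparrow_e} \mid eMu = 0\}$ is an $M$-submodule (if $eMu=0$ then $eM(au)\subseteq eMu = 0$), and it is proper because $e(V{\uparrow_e})\cong V \neq 0$ exhibits elements $u$ with $eu\neq 0$. For maximality, I would take any proper submodule $U$ and look at $eU = U{\downarrow_e} \subseteq e(V{\uparrow_e})\cong V$; since $V$ is simple over $\hat{G}_J$, either $eU = V$ or $eU = 0$. The first case forces $U \supseteq M\cdot e(V{\uparrow_e}) = V{\uparrow_e}$ by the generation fact, contradicting properness; hence $eU = 0$, and then $Mu\subseteq U$ gives $eMu \subseteq eU = 0$, so $U\subseteq N$. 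Thus $N$ is the unique maximal submodule and $Q(V)$ is simple. That $Q(V){\downarrow_e}\cong V$ follows since $e(V{\uparrow_e})\cap N = 0$ (an $e$-fixed $u$ with $eMu=0$ has $u = eu = 0$), so restriction is unchanged by passing to the quotient. Finally the apex is exactly $J$: for $x\in I_J$ and a generator $y\otimes w$ with $y\in J$, the product $xy$ cannot stay in $J$ (else $J\subseteq MxM$), so $xy = 0$ in $P(e)$ and $I_J$ annihilates $V{\uparrow_e}$, hence $Q(V)$; combined with $e\notin \operatorname{Ann}_M(Q(V))$, minimality of the apex pins it at $J$.

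For Part (2), let $V\in\Rep(M,\hat{G})$ be simple with apex $J$, so $\operatorname{Ann}_M(V) = I_J$ and $eV\neq 0$ since $e\notin I_J$. To see $W := eV$ is simple over $\hat{G}_J$, I would take $0\neq W'\le eV$, note $MW' = V$ by simplicity, and compute $eV = eMW' = (eMe)W'$ using $W' = eW'$; since $eMe = G_J \sqcup (eMe\cap I_J)$ and $I_J$ annihilates $V$, this collapses to $G_JW' = W'$, forcing $W' = eV$. Then I would construct the multiplication map $\psi\colon W{\uparrow_e}\to V$, $x\otimes w\mapsto xw$, checking it is well defined over $\otimes_{\hat{G}_J}$ and $M$-equivariant (the one subtlety being that when $ax$ drops below $J$ it lies in $I_J$ and so kills $w\in V$, matching $0\otimes w$). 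Since $e\otimes w\mapsto w$, the image is a nonzero submodule, hence all of $V$; as $V$ is simple, $\ker\psi$ is a proper maximal submodule, which by Part (1) must equal $N(W{\uparrow_e})$. Therefore $W{\uparrow_e}/N(W{\uparrow_e})\cong V$.

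Assembling the bijection is then immediate: $V\mapsto Q(V) = V{\uparrow_e}/N(V{\uparrow_e})$ and $V\mapsto V{\downarrow_e}$ are mutually inverse on isomorphism classes, since Part (1) gives $Q(V){\downarrow_e}\cong V$ and Part (2) gives $(V{\downarrow_e}){\uparrow_e}/N\cong V$. I expect the main obstacle to be purely at the foundational stage, namely verifying $eP(e)e = \hat{G}_J$ and the module-generation statement within the proto-abelian framework of $\Vect_{\hat{G}}$, where all arguments must be phrased via $\hat{G}$-orbits and admissible mono/epimorphisms; once those are secured, the rest propagates through the Jordan--H\"older and Krull--Schmidt machinery already available in $\Rep(M,\hat{G})$.
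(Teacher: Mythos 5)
Your proposal is correct and follows essentially the same route as the paper's proof: your Facts A and B are exactly the computations $e(V{\uparrow_e}) = e\otimes V \cong V$ and $Me\otimes V = V{\uparrow_e}$ that the paper performs inline, your lattice argument for the maximality of $N(V{\uparrow_e})$ is the paper's element-wise argument recast at the level of submodules, and your Part (2) (simplicity of $eV$ via $eMe$ acting through $G_J$ modulo $I_J$, then the multiplication map $x\otimes w \mapsto xw$) matches the paper, differing only in that you identify $\ker\psi$ with $N(W{\uparrow_e})$ via uniqueness of the maximal submodule while the paper applies the map on $Q(W)$ and invokes that a non-zero morphism between simples is an isomorphism. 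The only shared gloss is the verification that the multiplication map is genuinely $\hat{G}$-linear (injective on $G$-orbits), which the paper also dismisses as clear and which in fact uses left-inductivity of $M$.
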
 

\begin{proof} 
To prove (1), suppose that $V \in \Rep(\hat{G}_J,\hat{G})$ is simple. Then $V{\uparrow_e} \in \Rep(M,\hat{G})$ by Lemma \ref{l.ind}, and $N(V{\uparrow_e})$ is clearly a subrepresentation of $V{\uparrow_e}$. Note that $e\otimes V \neq 0$ in $V{\uparrow_e}$, and so $e^2 = e$ implies that $N(V{\uparrow_e})$ is proper. If $x\otimes w \not\in N(V{\uparrow_e})$, then $eM(x\otimes w) \neq 0$ and $e(Mx\otimes w) \subseteq e(V{\uparrow_e}) = e(P(e)e\otimes_{\hat{G}_J}V) = e\otimes V \cong V$ as a $\hat{G}_J$-representation, so $e(Mx\otimes w) = e(V{\uparrow_e})$ by the simplicity of $V$. In particular, $Me(Mx\otimes w) = Me(V{\uparrow_e})  = M(\hat{G}_J\otimes_{\hat{G}_J}V) = Me\otimes V  = P(e)e\otimes V = V{\uparrow_e}$. In other words, if $x\otimes w$ generates a proper subrepresentation of $V{\uparrow_e}$, then $x\otimes w \in N(V{\uparrow_e})$. Thus, $N(V{\uparrow_e})$ is the unique maximal subrepresentation of $V{\uparrow_e}$ and $Q(V) =V{\uparrow_e}/N(V{\uparrow_e})$ is a simple $\hat{G}$-linear representation of $M$. To prove that $J$ is the apex of $Q(V)$, first note that $eQ(V) \neq 0$ and so $JQ(V) \neq 0$ as well. Thus, $\operatorname{Ann}_M(Q(V)) \subseteq I_J$. Conversely, if $J \not\subseteq J(x)$ then $xP(e)e = 0$ and hence $x(V{\uparrow_e}) = 0$, which shows $I_J \subseteq \operatorname{Ann}_M(Q(V))$. Thus, $J$ is the apex of $Q(V)$. Furthermore, the elements of $Q(V){\downarrow_e} = eQ(V) = e[V{\uparrow_e}/N(V{\uparrow_e})]$ can be thought of as the union of the elements in $eM(x\otimes w)$, where $x\otimes w$ ranges through the tensors for which $eM(x\otimes w) \neq 0$. But by the argument above, such tensors satisfy $eM(x\otimes w) = e(V{\uparrow_e}) = e\otimes V $ and so $eQ(V) \cong V$ as $\hat{G}_J$-representations. Thus, claim (1) follows. 

To prove (2), first note that $eV \neq 0$ since $J$ is the apex of $V$. Pick any $v \in V$ such that $ev \neq 0$. Then $Mev \neq 0$ and so $Mev = V$ by simplicity. In particular, it follows that $eMev = eV$. But if $x \in eMe$ and $x \not\in J$, then $J(x) \subsetneq J(e)$ and hence $0 = xV = xeV = x(eMev)$. Therefore, $eV = eMev = (eMe \cap J)v\cup \{0\} = \hat{G}_Jv$, from which the simplicity of $W =V{\downarrow_e}$ follows. By (1), we know that $Q(W) = W{\uparrow_e}/N(W{\uparrow_e})$ is a simple $\hat{G}$-linear representation of $M$ with apex $J$. But the map $Q(W) \rightarrow V$ defined by $x\otimes v \mapsto xv$ is clearly a non-zero morphism in $\Rep(M,\hat{G})$, and so $Q(W) \cong V$ by simplicity.
\end{proof}   

We now end this section with some examples. 

\begin{myeg}[Path Monoids]\label{e.pathmonoids}
Let $Q$ be a finite acyclic quiver (i.e. $Q$ does not contain oriented cycles of positive length). Then $\hat{G}Q$ denotes the $\hat{G}$-linear path monoid of $Q$: the non-trivial $G$-orbits of $\hat{G}Q$ are in bijective correspondence with the oriented paths in $Q$, with multiplication defined via concatenation of paths: for paths $p_1$ and $p_2$ in $Q$ and $g_1, g_2 \in G$, $(g_1p_1)(g_2p_2) = g_1g_2(p_1p_2)$ if $t(p_1) = s(p_2)$, and $(g_1p_1)(g_2p_2) = 0$ otherwise. The non-trivial idempotents of $\hat{G}Q$ are in bijective correspondence with vertices of $Q$ and the unit $1$. For any vertex $v$ of $Q$, $P(v)$ is the one-dimensional simple $\hat{G}$-linear representation afforded by $v$ and $P(1)$ is the $\hat{G}$-linear representation afforded by the units of of $\hat{G}Q$. In particular, $\hat{G}Q$ is always left and right inductive. Quotients of $\hat{G}$-linear path monoids by relations may or may not remain inductive.
\end{myeg} 

\begin{myeg}[Left Inductive $\not\Rightarrow$ Brandt Principal Factors]\label{e.BrandtCE} 
Consider the quiver 
\[ Q = 
\begin{tikzcd} 
e \arrow[r, swap, bend right = 30, "\beta"] & \arrow[l, swap, bend right = 30, "\alpha"] f
\end{tikzcd} 
\]  
subject to the (non-admissible) relations $\alpha \beta = f$, $\beta \alpha = 0$. Then the resulting $\FF_1$-linear path monoid with relations is $M = \{ 0, e, f, \alpha, \beta, 1 \}$. It is easy to check that $P(e) = \{0, e\}$ is the $1$-dimensional simple representation of $Q$ at $e$, which is $\FF_1$-linear. On the other hand, $J(f) = \{0, f, \alpha, \beta\}$ and $\mathcal{J}_f = \{f, \alpha, \beta\}$. It is straightforward to check that left translation by any element of $M$ yields an $\FF_1$-linear map on $P(f) = J(f)/\{0\}$. The principal factor $P(1)$ is also $\FF_1$-linear via left translation, as in the previous example. In particular, $M$ is a left inductive $\FF_1$-linear monoid. However, $\beta, f, \alpha \in P(f)$ and $\beta f = \beta \neq 0$, $f \alpha = \alpha \neq 0$, whereas $\beta f \alpha = \beta \alpha = 0$. Thus, $P(f)$ is not a Brandt semigroup \cite{CliffPrest1961}. Scalar extension yields a similar example for $\hat{G}$-linear monoids.
\end{myeg} 

\begin{myeg}[Left Inductive $\not\Rightarrow$ Right Inductive]\label{e.LRCE}
Consider the quiver 
\[ Q = 
\begin{tikzcd} 
e \arrow[r, swap, bend right = 30, "\gamma"] & \arrow[l, swap, bend right = 60, "\alpha"] \arrow[l,bend right = 30, "\beta"] f
\end{tikzcd} 
\]  
subject to the (once again non-admissible) relations $\alpha \gamma = \beta \gamma = f$, $\gamma \alpha = \gamma \beta = 0$. The resulting $\FF_1$-linear path monoid with relations is $M = \{0, e, f, \alpha, \beta, \gamma, 1\}$. One can check that $P(e)$, $P(f)$, and $P(1)$ are $\FF_1$-linear representations under left translation, and hence $M$ is left inductive. However, $\alpha$, $\beta$, and $\gamma$ are non-zero elements of $P(f)$ satisfying $ \alpha \gamma = \beta \gamma = f \neq 0$, but $\alpha \neq \beta$. Thus, $P(f)$ is not $\FF_1$-linear under right translation, and $M$ is not right inductive. Similarly, one can construct an $\FF_1$-linear monoid that is right inductive, but not left inductive. Scalar extension then creates $\hat{G}$-linear examples.
\end{myeg}


\section{Semisimplicity Results for $\Rep(M,\hat{G})$}\label{s.SS}
 
We now characterize the finite, $\hat{G}$-linear, left inductive monoids $M$ for which $\Rep(M,\hat{G})$ is semisimple. The following result shows that $I_n(\hat{G})$ is left inductive and right inductive. Semigroup theorists should observe that this essentially follows from the fact that the principal factors of $I_n(\hat{G})$ are Brandt semigroups, and thus act on themselves via $\FF_1$-linear maps \cite{CliffPrest1961}. However, to keep the prerequisites of this article to a minimum, we have opted for a direct proof instead.

\begin{lem}\label{l.LIE}
For any finite abelian group $G$ and positive integer $n$, $I_n(\hat{G})$ is left inductive and right inductive. 
\end{lem}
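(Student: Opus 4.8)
The plan is to reduce everything to the rank stratification of $I_n(\hat{G})$ and then exploit the existence of $*$-inverses (Remark \ref{r.composition}) to establish a cancellation property for left and right translation. First I would pin down the regular principal factors. Since $I_n(\hat{G})$ is an inverse monoid, Proposition \ref{p.regular} shows every $\mathcal{J}$-class is regular, so all of them must be treated. The idempotents are exactly the diagonal projections $E_S$: by Remark \ref{r.Gidem} a submonomial idempotent fixes or kills each basis vector, forcing it to have the form $E_S$. For a non-zero idempotent $e = E_S$ of rank $d = |S|$, I would show $J(e) = MeM = I_n(\hat{G})^{\le d}$ and $I(e) = I_n(\hat{G})^{\le d-1}$: the inclusion $\subseteq$ is immediate from subadditivity of rank under products, while $\supseteq$ follows by routing any rank-$r \le d$ matrix $A = M_{S_A,f_A,c_A}$ through $E_S$, writing $A = pE_S q$ with $q$ an injection of $\operatorname{dom}(A)$ into $S$ and $p$ carrying it back onto $\operatorname{im}(A)$ with the correct coefficients. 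Consequently $\mathcal{J}_e$ is the set of rank-$d$ elements and $P(e) = J(e)/I(e)$ is the pointed $\hat{G}$-set of rank-$d$ submonomial matrices together with a zero, on which $a \in M$ acts on the left by $a\cdot x = ax$ when $\operatorname{rank}(ax) = d$ and $a\cdot x = 0$ otherwise. This $\hat{G}$-set is free because $G$ acts freely on $M$, so it is an object of $\Vect_{\hat{G}}$.

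The heart of the argument is verifying that each left-translation map $L_a \colon P(e) \to P(e)$ is $\hat{G}$-linear. $G$-equivariance and $L_a(0)=0$ are automatic from the centrality of $G$ (so $a(gx)=g(ax)$) and $a\cdot 0 = 0$. The remaining orbit condition from the definition of $\Vect_{\hat{G}}$ asks that $L_a(x_1) = L_a(x_2)\neq 0$ force $\hat{G}x_1 = \hat{G}x_2$, and I would in fact prove the stronger statement $x_1 = x_2$. The key observation is that $ax_i \neq 0$ in $P(e)$ means $\operatorname{rank}(ax_i) = d = \operatorname{rank}(x_i)$, which forces $\operatorname{im}(x_i) \subseteq \operatorname{dom}(a)$. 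Letting $a^*$ be the $*$-inverse supplied by Remark \ref{r.composition}, so that $a^*a = E_{\operatorname{dom}(a)}$, I obtain $a^*a x_i = E_{\operatorname{dom}(a)} x_i = x_i$ precisely because $\operatorname{im}(x_i) \subseteq \operatorname{dom}(a)$. Left-multiplying $ax_1 = ax_2$ by $a^*$ then yields $x_1 = x_2$, so $L_a$ is injective on non-zero elements and hence $\hat{G}$-linear. This shows $P(e) \in \Rep(M,\hat{G})$ under left translation, i.e. $I_n(\hat{G})$ is left inductive.

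For right inductivity I would run the mirror-image argument: right translation $R_a(x) = xa$ (or $0$ if the rank drops) is $G$-equivariant, and $R_a(x_1) = R_a(x_2) \neq 0$ forces $\operatorname{dom}(x_i) \subseteq \operatorname{im}(a)$; right-multiplying $x_1 a = x_2 a$ by $a^*$ and using $a a^* = E_{\operatorname{im}(a)}$ (again Remark \ref{r.composition}) gives $x_i a a^* = x_i E_{\operatorname{im}(a)} = x_i$, hence $x_1 = x_2$. The main obstacle is organizational rather than conceptual: one must take care that the collapse-to-zero-on-rank-drop built into the principal factor is matched correctly with the domain/image containment conditions, so that the $*$-inverse cancellation is applied only in the regime where it genuinely recovers $x_i$. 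Once that bookkeeping is in place, the cancellation step is immediate from Proposition \ref{p.BasicInverses} and Remark \ref{r.composition}.
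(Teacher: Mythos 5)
Your proof is correct and follows essentially the same route as the paper's: identify the non-zero idempotents as the $E_S$, identify $P(E_S)$ with the rank stratum $I_n(\hat{G})^{\le d}/I_n(\hat{G})^{\le d-1}$, and prove injectivity of translation on non-zero elements by cancelling with a partial inverse supplied by Remark \ref{r.composition}. The only differences are organizational: the paper first splits the principal factor as $\bigoplus_{|S|=d}{V_S}$ and builds a left inverse for $A$ relative to the common image of $B$ and $C$, whereas you work with the whole factor and cancel with the global $*$-inverse $a^*$ after observing that rank preservation forces $\operatorname{im}(x_i)\subseteq\operatorname{dom}(a)$ --- slightly cleaner bookkeeping that also lets you write out the right-handed case explicitly rather than leaving it as a mirror argument.
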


\begin{proof} 
It is clear that $I_n(\hat{G})$ is a finite $\hat{G}$-linear monoid, so we only need to verify that it is left inductive and right inductive. We only prove left inductivity, as right inductivity follows from a similar argument. Every idempotent of $I_n(\hat{G})$ is of the form $E_S$, for a suitable subset $S\subseteq [n]$. Fix an $S$ and let $|S| = d$. Then it follows from Definition \ref{d.rank} that $J(E_S)$ is the set of all $A \in I_n(\hat{G})$ with $\operatorname{rank}(A) \le d$ and $A \in I(E_S)$ if and only if $\operatorname{rank}(A)<d$. In other words, $P(E_S) = I_n(\hat{G})^{\le d}/I_n(\hat{G})^{\le d-1}$ as a $\hat{G}$-vector space. It is also clear that as a $\hat{G}$-vector space,  
\[
I_n(\hat{G})^{\le d}/I_n(\hat{G})^{\le d-1} = \bigoplus_{|S| = d}{V_S} 
\] 
where the direct sum ranges over all $d$-element subsets $S$ of $[n]$ and $V_S := (I_n(\hat{G})^{\le d}/I_n(\hat{G})^{\le d-1})E_S$. So, it suffices to prove that $V_S \in \Rep(I_n(\hat{G}),\hat{G})$ for all $S$.
As a set, $V_S\setminus\{0\}$ is the collection of all matrices of the form $M_{S,f,c}$, where $f$ ranges over the injective functions $f : S \rightarrow [n]$ and $c$ ranges over the functions $c : f(S) \rightarrow G$. Suppose that $A \in I_n(\hat{G})$, $B, C \in V_S$ with $AB = AC \neq 0$. Recall that in Definition \ref{d.D_nG}, we defined $D_n(G)$ to be the set of all invertible diagonal matrices in $I_n(\hat{G})$. Since $I_n(\hat{G}) = D_n(G)\cdot I_n$, we can write $B = T_BM_{S,f_B}$ and $C = T_CM_{S,f_C}$, with $T_B, T_C \in D_n(G)$ and $M_{S,f_B}, M_{S,f_C} \in I_n$ satisfying $f_B(S) = f_C(S) = S'$. Write $A = T\cdot M_{S',f}$, where $T \in D_n(G)$ and $M_{S,f} \in I_n$. Then $A' = M_{f(S'),f^{-1}}T^{-1} \in I_n(\hat{G})$ satisfies $A'A = E_{S'}$. Thus, $B = E_{S'}B = (A'A)B = A'(AB) = A'(AC) = C$. It follows that $V_S$ is a $\hat{G}$-linear representation of $I_n(\hat{G})$. 
 \end{proof}

\begin{cor}\label{c.LRI}
Let $M$ be a finite, $\hat{G}$-linear inverse monoid. Then $M$ is left inductive and right inductive.
\end{cor}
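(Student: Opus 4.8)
The plan is to prove the stronger statement that each regular principal factor of $M$ is not merely $\hat{G}$-linear but in fact $\FF_1$-linear under left translation, i.e. that left translation by any $a \in M$ is injective away from $0$ on every principal factor. Fix a nonzero idempotent $e \in M$ and set $J = \mathcal{J}_e$. Since $M$ is inverse, every element is regular, so every $\mathcal{J}$-class is regular (Proposition \ref{p.BasicInverses}) and $P(e) = J(e)/I(e)$ runs over all principal factors of $M$ as $e$ runs over the idempotents. As a pointed set $P(e) = J \sqcup \{0\}$, and since the central subgroup $G$ acts freely on $M\setminus\{0_M\}$ it acts freely on $J$; hence $P(e)$ is a $\hat{G}$-vector space. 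Left translation descends to a monoid action of $M$ on the Rees quotient $P(e)$, giving for each $a \in M$ a map $\lambda_a : P(e) \to P(e)$, $x \mapsto ax$ (interpreted as $0$ when $ax \notin J$), which is $G$-equivariant because $G$ is central. To conclude that $P(e) \in \Rep(M,\hat{G})$ under left translation it remains only to check that each $\lambda_a$ is $\hat{G}$-linear, and I will verify the stronger property that $\lambda_a$ is injective away from $0$: if $x,y \in J$ and $ax = ay \in J$, then $x = y$.

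This cancellation claim is the heart of the argument, and I would establish it using $*$-inverses together with the stability of finite semigroups. First, $ax \in M^1 x$ gives $ax \le_{\mathcal{L}} x$, and since $ax$ and $x$ both lie in the common $\mathcal{J}$-class $J$, stability forces $ax \mathrel{\mathcal{L}} x$; the same reasoning applied to $ay = ax \in J$ and $y \in J$ gives $ay \mathrel{\mathcal{L}} y$. In a finite inverse monoid $u \mathrel{\mathcal{L}} v$ is equivalent to $u^*u = v^*v$, so $x^*a^*ax = x^*x$. Writing $p = xx^*$ and $q = a^*a$ — both idempotents, which commute by Proposition \ref{p.BasicInverses} — and multiplying this identity on the left by $x$ yields $pqx = xx^*x = x$. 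Using $px = xx^*x = x$ together with $pq = qp$, we obtain $pqx = qpx = qx$, whence $qx = x$, i.e. $a^*ax = x$; identically $a^*ay = y$. Applying $a^*$ on the left to $ax = ay$ then gives $x = a^*ax = a^*ay = y$, as claimed.

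With the claim in hand, every $\lambda_a$ is injective away from $0$, hence $G$-equivariant and $\hat{G}$-linear, so $P(e) \in \Rep(M,\hat{G})$ under left translation and $M$ is left inductive. Right inductivity follows from the mirror-image computation: for $x,y \in J$ with $xa = ya \in J$ one has $xa \le_{\mathcal{R}} x$, so $xa \mathrel{\mathcal{R}} x$ by stability, the relation $u \mathrel{\mathcal{R}} v \Leftrightarrow uu^* = vv^*$ gives $xaa^*x^* = xx^*$, and multiplying on the right by $x$ and commuting idempotents yields $xaa^* = x$, so $xa = ya$ forces $x = y$. (Alternatively, the anti-automorphism $a \mapsto a^*$ fixes each $\mathcal{J}$-class setwise and interchanges left and right translation, so right inductivity is formally dual to left inductivity.) Conceptually this is the expected behaviour — the regular principal factors of a finite inverse monoid are Brandt semigroups, which act on themselves by partial bijections and hence by $\FF_1$-linear maps — and the computation above is just a coordinate-free verification of that fact. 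The main obstacle is the cancellation claim, and the one point requiring care there is invoking stability correctly, so that the $\le_{\mathcal{L}}$ (resp. $\le_{\mathcal{R}}$) preorder genuinely upgrades to Green's $\mathcal{L}$ (resp. $\mathcal{R}$) relation inside the $\mathcal{J}$-class $J$.
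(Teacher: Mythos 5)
Your proof is correct, but it takes a genuinely different route from the paper's. The paper proves this corollary by applying its Wagner--Preston Theorem (Theorem \ref{t.WP}) to realize $M$ as a $*$-closed submonoid of $I_n(\hat{G})$, and then showing by a rank computation that $I(e) = J(e)\cap I_n(\hat{G})^{\le d-1}$, so that each principal factor $P(e)$ of $M$ embeds as an $M$-subrepresentation of $I_n(\hat{G})^{\le d}/I_n(\hat{G})^{\le d-1}$; the $\hat{G}$-linearity of $P(e)$ is then inherited from Lemma \ref{l.LIE}. You instead work intrinsically inside $M$: your key step is the cancellation claim that $x,y \in J$ and $ax = ay \in J$ force $x = y$, proved by using stability of finite semigroups (to upgrade $ax \le_{\mathcal{L}} x$ together with $ax \mathrel{\mathcal{J}} x$ to $ax \mathrel{\mathcal{L}} x$) and the characterization $u \mathrel{\mathcal{L}} v \Leftrightarrow u^*u = v^*v$ in inverse semigroups, after which the commuting-idempotents computation yielding $a^*ax = x$ is correct, as is your mirror-image (or $*$-duality) treatment of right inductivity. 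What your approach buys: it avoids coordinates entirely and establishes something strictly stronger, namely that left and right translations on the principal factors are injective away from zero, i.e. the principal factors are $\FF_1$-linear acts --- exactly the Brandt-semigroup phenomenon the paper alludes to before Lemma \ref{l.LIE}. What the paper's approach buys: it relies only on machinery already developed in the article (Theorem \ref{t.WP} and Lemma \ref{l.LIE}), whereas your argument imports two standard facts not proved in the paper --- stability of finite semigroups and the idempotent classification of Green's $\mathcal{L}$- and $\mathcal{R}$-classes in inverse semigroups. These are classical and can be cited from \cite{CliffPrest1961} or \cite{Steinberg2016}, so this is not a gap, but it runs against the paper's stated aim of keeping semigroup-theoretic prerequisites to a minimum.
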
 

\begin{proof} 
By Theorem \ref{t.WP}, we may assume without loss of generality that $M$ is a $*$-closed submonoid of $I_n(\hat{G})$ for some positive integer $n$. Again, we only prove left inductivity, as right inductivity follows from a similar argument. Pick a non-zero idempotent $e \in M$. Then $e = E_S$ for some subset $S\subseteq [n]$ with $|S| = d$. Clearly, $J(e) = MeM \subset I_n(\hat{G})^{\le d}$. We claim that $I(e) = J(e) \cap I_n(\hat{G})^{\le d-1}$. It is clear that $I(e) \supseteq J(e)\cap I_n(\hat{G})^{\le d-1}$, so assume that $x \in J(e)$ satisfies $\operatorname{rank}(x) = d$. Write $x = ler$ for $l,r \in M$. Then $\operatorname{rank}(x) = \operatorname{rank}(e) = d$, and hence $\operatorname{rank}(le) = d$ as well. Since $(le)^* = e^*l^* = el^* \in eM$. Thus, $(le)^*(le)$ is a rank-$d$ idempotent in $eMe \subseteq eI_n(\hat{G})e = E_SI_n(\hat{G})E_S$. But the only rank-$d$ idempotent in $E_SI_n(\hat{G})E_S$ is $E_S = e$, and hence $(le)^*(le) = e$. It follows that $(le)^*x = er$, and similarly we may show $(le)^*x(er)^* = e$. In other words, $x \in J(e)\setminus I(e)$. It follows that $I(e) \subseteq J(e)\cap I_n(\hat{G})^{\le d}$. In other words, $P(e)$ is an $M$-subrepresentation of $[I_n(\hat{G})^{\le d}/I_n(\hat{G})^{\le d-1}]{\downarrow_e}$, which is $\hat{G}$-linear by Lemma \ref{l.LIE}. Thus, $M$ is left inductive. 
\end{proof}   

\begin{lem}\label{l.nilideal}
Let $M$ be a finite, $\hat{G}$-linear monoid and suppose that $\Rep(M,\hat{G})$ is semisimple. If $V \in \Rep(M,\hat{G})$ and $J$ is a nilpotent ideal of $M$, then $JV = 0$.
\end{lem}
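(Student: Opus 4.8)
The plan is to prove the contrapositive: if $M$ has a nonzero nilpotent ideal $I$, then $\Rep(M,\hat{G})$ is not semisimple. The first, routine, step is the observation that a nilpotent ideal kills every simple object. Indeed, if $V\in\Rep(M,\hat{G})$ is simple, then $IV=\{a\cdot v\mid a\in I,\ v\in V\}$ is a subobject of $V$: it is a union of $G$-orbits containing $0_V$ (since $ga\in I$ for $g\in G$, $a\in I$, as $I$ is a two-sided ideal), and it is $M$-stable for the same reason. By simplicity $IV=0$ or $IV=V$; but $IV=V$ forces $I^kV=V$ for all $k$ by induction, contradicting $I^k=0$ for $k\gg 0$. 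Hence $IV=0$. Consequently, if $\Rep(M,\hat{G})$ were semisimple, every object, being a direct sum of simples, would be annihilated by $I$.

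The heart of the argument is therefore to exhibit a single object of $\Rep(M,\hat{G})$ that $I$ does \emph{not} annihilate, which then cannot be a direct sum of simples. To locate convenient elements, let $n\ge 2$ be minimal with $I^n=0$. Since $I^{n-1}\ne 0$, choose a product $c=a_1\cdots a_{n-1}\ne 0$ with each $a_i\in I$, and set $a=a_1\in I$ and $u=a_2\cdots a_{n-1}\in I^{n-2}$ (with $u=1$ when $n=2$). Then $c=au\ne 0$, while $ac\in I^n=0$, and more generally $Ic=cI=0$ and $c^2=0$. Thus $a$ ought to act on a suitable representation as a nonzero, square-zero endomorphism sending the line through $u$ to the line through $c$, and $c$ to $0$.

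To realize this I would work inside the principal left ideal $Mu$, which carries the left-translation action of $M$ together with the two-step filtration $Mu\supseteq Iu\supseteq I^2u=0$ (the equality $I^2u=0$ holds because $u\in I^{n-2}$). Here $Iu$ is an $M$-stable bottom layer containing $c=au\ne 0$ on which $I$ acts by zero. The goal is to pass to a $\hat{G}$-linear quotient of $Mu$ retaining these two layers, producing an object $V\in\Rep(M,\hat{G})$ in which $a$ acts as a nonzero nilpotent endomorphism. Such a $V$ is cyclic (generated by the image of $u$) yet has a proper nonzero subobject (the image, or kernel, of the action of $a$); it is therefore indecomposable and not simple, so by the Krull--Schmidt Theorem it is not a direct sum of simple objects, and $I$ does not annihilate it. With such a $V$ in hand, the contradiction with Proposition \ref{p.verifyss} is immediate.

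The main obstacle is exactly the construction in the previous paragraph: unlike over a field, a principal left ideal of $M$ (or a Rees quotient of it) is almost never an object of $\Rep(M,\hat{G})$, because left translation need not be $\hat{G}$-linear, i.e. $\FF_1$-linear on $G$-orbits. A naive two-dimensional assignment with basis the orbits of $u$ and $c$ fails for three reasons that the construction must control: a general $m\in M$ can collapse distinct orbits; units of $M$ lying outside $G$ can permute orbits nontrivially; and $c$ may admit factorizations $c=m_1m_2$ with $m_1\notin G\cup Gc$. Each of these violates the orbit-injectivity demanded by the definition of $\Vect_{\hat{G}}$. The real work, then, is to choose the collapsing congruence on $Mu$ carefully enough that every $m\in M$ induces an $\FF_1$-linear map on orbits while keeping $au=c$ nonzero; the annihilation computation and the appeal to semisimplicity are comparatively formal.
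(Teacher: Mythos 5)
Your opening reduction is sound: a nilpotent ideal annihilates every simple object, so under semisimplicity it annihilates every object, and it therefore suffices to exhibit a single $V \in \Rep(M,\hat{G})$ with $IV \neq 0$ (equivalently, a non--completely-reducible object, contradicting Proposition \ref{p.verifyss}). Your observation that nonzero cyclic objects are automatically indecomposable in this setting is also correct, since a direct sum here is a disjoint union glued at $0$. But the proposal stops exactly where the content of the lemma lies: you never construct the witnessing object. Your plan is to find a congruence on the translation act $Mu$ whose quotient is $\hat{G}$-linear while keeping $au = c$ nonzero, and you yourself enumerate the obstructions (orbit collapse, units outside $G$, alternative factorizations of $c$) without resolving any of them. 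Nothing in the $\hat{G}$-linearity axioms guarantees that such a congruence exists: left translation is precisely the operation that fails to be $\FF_1$-linear on orbits for general $\hat{G}$-linear monoids, and the quotient you need might be forced to kill $Iu$ entirely. Deferring this to ``the real work'' leaves a genuine gap, since the annihilation bookkeeping you do carry out is the routine part.

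The paper's proof sidesteps translation representations altogether, and this is worth internalizing. Let $d$ be maximal with $J^d \neq 0$; since $J^d$ is an ideal of a $\hat{G}$-linear monoid, it is itself a $\hat{G}$-vector space (a free $G$-act away from $0$). Starting from \emph{any} nonzero $V \in \Rep(M,\hat{G})$ (e.g. $V = M/N(M)$, where $N(M)$ is the set of non-units), the paper forms $\tilde{V} = (J^d\otimes_{\hat{G}}V)\oplus V$ and lets $x \notin J^d$ act diagonally ($1\otimes v \mapsto 1\otimes xv$ and $j\otimes v \mapsto j\otimes xv$), while $x \in J^d$ acts as a shift: $1\otimes v \mapsto x\otimes v$ and $j\otimes v \mapsto 0$. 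The $\hat{G}$-linearity of this action is exactly what your approach cannot secure: it follows from freeness of $G$ on $J^d\setminus\{0\}$ and from $J^{d+1} = 0$, not from any property of left multiplication in $M$. Then $J^d\otimes_{\hat{G}}V$ is a proper subobject whose set-theoretic complement $(\tilde{V}\setminus J^d\otimes_{\hat{G}}V)\cup\{0\}$ is not $M$-invariant (an $x \in J^d$ pushes $1\otimes v$ into it), and since any complement in these categories is forced to be that set, none exists; so $\tilde{V}$ is not completely reducible and Proposition \ref{p.verifyss} finishes the argument. If you wished to salvage your cyclic-quotient framing, note that $M(1\otimes v) \le \tilde{V}$ is indeed a quotient of $M$ by an act congruence not annihilated by $J$ --- but writing down that congruence is tantamount to reproducing the paper's construction, so the extension $\tilde{V}$, not the congruence, is the missing idea.
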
 

\begin{proof}  
Without loss of generality assume that $J \neq 0$, and let $d$ be the smallest positive integer for which $J^d = 0$. Note that $d \geq 2$. Since $JV \le V$, by Proposition \ref{p.verifyss} it follows that $V = JV \oplus W$ for a suitable $W \le V$. But then $J^{d-1}V = J^dV \oplus J^{d-1}W = J^{d-1}W$, and hence $J^{d-1}V = J^{d-1}W \subseteq JV \cap W = 0$. In general, if we have chosen $k$ so that $d-k \geq 2$ and we know that $J^{d-k}V = 0$, then $J^{d-k-1}V = J^{d-k}V\oplus  J^{d-k-1}W = J^{d-k-1}W \subseteq JV\cap W = 0$. Repeating this process a finite number of times allows us to conclude $JV = 0$, as we wished to show.\end{proof}

\begin{lem}\label{l.premain} 
Let $M$ be a finite, $\hat{G}$-linear, left inductive monoid. Then the following are equivalent: 
\begin{enumerate}
\item $M$ is inverse. 
\item $M$ is regular. 
\end{enumerate}
\end{lem} 

\begin{proof} 
The implication (1)$\Rightarrow$(2) follows from Proposition \ref{p.BasicInverses}. To prove (2)$\Rightarrow$(1), choose any $a \in M$. Then $J(a) = J(e)$ for some idempotent $e \in M$ and $P(a) = P(e)$ is $\hat{G}$-linear. If $a$ is non-zero, then $P(a)$ is $0$-simple and $P(a)^2 = P(e)^2 = P(e) \neq 0$ by simplicity. In particular, we may write $e = xayaz$ for suitable $x,y,z \in M$. It follows that $ayaz \in J(a)\setminus I(a) = J(e)\setminus I(e)$, and so $a$ does not act via the zero map on $P(e)$. Therefore,
\[ 
V = \bigoplus_{e \in E(M)}{P(e)}
\] 
is a faithful $\hat{G}$-linear representation of $M$. This is equivalent to the existence of an injective, $\hat{G}$-linear monoid homomorphism $M \hookrightarrow \End_{\hat{G}}(V) \cong I_n(\hat{G})$, where $n = \dim_{\hat{G}}(V)$. But $I_n(\hat{G})$ is an inverse monoid, and so its idempotents commute. Therefore, $M$ is a regular monoid whose idempotents commute, and so it is inverse by Proposition \ref{p.BasicInverses}. 
\end{proof}

\begin{mythm}\label{t.main} 
Let $M$ be a finite, $\hat{G}$-linear, left inductive monoid. Then the following are equivalent: 
\begin{enumerate} 
\item $\Rep(M,\hat{G})$ is semisimple. 
\item For all nilpotent ideals $J$ of $M$ and $V \in \Rep(M,\hat{G})$ we have $JV = 0$. 
\end{enumerate}  
In particular, if $M$ is also regular then $\Rep(M,\hat{G})$ is semisimple.
\end{mythm} 

\begin{proof} 
 (1)$\Rightarrow$(2) follows from Lemma \ref{l.nilideal}. To prove (2)$\Rightarrow$(1), assume that for any nilpotent ideal $J$ and $V \in \Rep(M,\hat{G})$ we have $JV = 0$. Let $\mathcal{R}$ denote the union of all nilpotent ideals of $M$. Then $\mathcal{R}$ is a proper ideal of $M$, and since $M$ is a finite set we can write $\mathcal{R} = J_1\cup \ldots \cup J_k$, where each $J_i$ is a nilpotent ideal of $M$. Since $J_{i_1}J_{i_2} \subseteq J_{i_1} \cap J_{i_2}$ for all $i_1, i_2 \le k$, it follows that $\mathcal{R}$ is itself a nilpotent ideal of $M$.

Consider the Rees factor monoid $M/\mathcal{R}$. Since $M$ is finite and $\hat{G}$-linear, Remark \ref{r.ReesFactor} implies that $M/\mathcal{R}$ is as well. We claim that $M/\mathcal{R}$ is regular and left inductive. To see that it is regular, suppose by way of contradiction that there exists an element $a \in M$ for which $[a] \in M/\mathcal{R}$ is not regular. Then in particular $a$ is not regular in $M$ and $[a]\neq [0]$ in $M/\mathcal{R}$, which implies $a \not\in \mathcal{R}$ in $M$. But Proposition \ref{p.nilpotentcondition} then implies that $J(a)$ is nilpotent and hence $J(a) \subseteq \mathcal{R}$, a contradiction. Thus, $M/\mathcal{R}$ is regular. To see it is left-inductive, choose an idempotent $e \not\in \mathcal{R}$ and consider the principal factor $P([e]) = J([e])/I([e])$ in $M/\mathcal{R}$. Suppose that $x, m_1, m_2 \in M$ are chosen so that $[m_1], [m_2] \in P([e])$ and $[x][m_1] = [x][m_2] \neq [0]$. Since $[x][m_i] = [xm_i]$ for $i = 1,2$ this implies $xm_1 = xm_2$ and $xm_1, xm_2 \not\in \mathcal{R}$. But $[m_i] \in P([e])$ implies that $m_1$ and $m_2$ generate $J(e)$ in $M$, and hence $xm_1 = xm_2 \not\in I(e)$. Since $P(e)$ is a $\hat{G}$-linear representation of $M$ under left translation, it follows that $m_1 = m_2$, and hence $[m_1] = [m_2]$. Thus, $M/\mathcal{R}$ is left inductive.  

It now follows from Lemma \ref{l.premain} that $M/\mathcal{R}$ is inverse. We claim that $\Rep(M/\mathcal{R},\hat{G})$ is semisimple. Indeed, pick $V \in \Rep(M/\mathcal{R},\hat{G})$ and $W \le V$. Suppose that $v \in V\setminus W$ and $[a]v \in W$ for some $[a] \in M/\mathcal{R}$. Then $([a]^*[a])v = [a]^*([a]v) \in W\cap \{0,v\} = \{0\}$, and hence $[a]v = ([a][a]^*[a])v = 0$ as well. In other words, for all $v \in V\setminus W$ and $[a] \in M/\mathcal{R}$, either $[a]v \in V\setminus W$ or $[a]v = 0$. It follows that $V/W = (V\setminus W)\cup \{0\}$ is a complement to $W$ in $V$. Thus, $V$ is completely reducible, which from Proposition \ref{p.verifyss} shows that $\Rep(M/\mathcal{R},\hat{G})$ is semisimple. 

Finally, given $V \in \Rep(M,\hat{G})$ and $W \le V$, (2) implies $\mathcal{R}W =  \mathcal{R}V = 0$ since $\mathcal{R}$ is nilpotent. Thus,  $V$ may be considered as a $\hat{G}$-linear representation of $M/\mathcal{R}$, with $W$ a sub-representation. By semisimplicity, $V = W\oplus W'$ for a suitable subspace $W' \subseteq V$ with $[a]W' \subseteq W'$ for all $[a] \in M/\mathcal{R}$. But note that $\mathcal{R}W' \subseteq \mathcal{R}V = 0$, and for all $a \in M\setminus\mathcal{R}$ we have $aW' = [a]W' \subseteq W'$. It follows that $V = W\oplus W'$ as $M$-representations, and thus $\Rep(M,\hat{G})$ is semisimple.
\end{proof}

We note the following useful consequence of the previous result:

\begin{cor} 
Let $M$ be a finite, $\hat{G}$-linear, regular monoid. Then the following are equivalent: 
\begin{enumerate} 
\item $M$ is left inductive.
\item $M$ is right inductive. 
\item $M$ is inverse.
\end{enumerate}
\end{cor}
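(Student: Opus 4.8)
The plan is to deduce this corollary entirely from Theorem \ref{t.main}, its left--right dual, and the earlier corollary (the one following Lemma \ref{l.LIE}) asserting that every finite $\hat{G}$-linear inverse monoid is both left and right inductive. Since $M$ is assumed regular throughout, the implications (3)$\Rightarrow$(1) and (3)$\Rightarrow$(2) are already in hand: if $M$ is inverse, that earlier corollary supplies both inductivity conditions at once. Thus the genuine content lies in the two converse implications (1)$\Rightarrow$(3) and (2)$\Rightarrow$(3).

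For (1)$\Rightarrow$(3) I would apply Theorem \ref{t.main} verbatim. By hypothesis $M$ is finite, $\hat{G}$-linear, and left inductive, so conditions (1), (2), (3) of that theorem are equivalent; since $M$ is regular, condition (2) of the theorem holds, whence condition (1) holds and $M$ is inverse. No additional work is needed here.

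For (2)$\Rightarrow$(3) I would pass to the opposite monoid $M^{\operatorname{op}}$. First I would check that $M^{\operatorname{op}}$ is again a finite $\hat{G}$-linear monoid: the units of $M^{\operatorname{op}}$ coincide with those of $M$, the group $G$ remains central (commutativity being a symmetric condition), and the translation action of $G$ is unchanged because $G$ is central, hence still free. Next I would verify three dictionary entries. (a) Regularity is self-dual, since $x \cdot_{\operatorname{op}} y \cdot_{\operatorname{op}} x = xyx$, so $x$ is regular in $M$ exactly when it is regular in $M^{\operatorname{op}}$ with the same witness $y$. (b) Being inverse is self-dual, since the idempotents of $M$ and $M^{\operatorname{op}}$ coincide and commutativity of idempotents is symmetric (equivalently, $x \mapsto x^*$ exhibits the self-duality directly). (c) Right translation in $M$ equals left translation in $M^{\operatorname{op}}$, while the two-sided ideals, the $\mathcal{J}$-classes, and hence the regular principal factors of $M$ and $M^{\operatorname{op}}$ are literally the same sets; consequently ``$M$ right inductive'' is precisely ``$M^{\operatorname{op}}$ left inductive.'' Granting these, $M^{\operatorname{op}}$ is finite, $\hat{G}$-linear, left inductive, and regular, so Theorem \ref{t.main} forces $M^{\operatorname{op}}$ to be inverse, and by (b) the monoid $M$ is inverse. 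Chaining the implications (1)$\Rightarrow$(3)$\Rightarrow$(2) and (2)$\Rightarrow$(3)$\Rightarrow$(1) then yields the full equivalence.

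The only step that demands genuine care—and it is bookkeeping rather than a new idea—is the dualization in (c), together with the confirmation that $M^{\operatorname{op}}$ meets every standing hypothesis of Theorem \ref{t.main}. Everything else is a direct citation of results already established in the preceding sections.
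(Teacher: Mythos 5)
Your proposal is correct and follows essentially the same route as the paper: deduce (1)$\Leftrightarrow$(3) directly from Theorem \ref{t.main}, handle the inverse-implies-right-inductive direction via the Wagner--Preston embedding and Lemma \ref{l.LIE} (packaged in the earlier corollary), and obtain the remaining implication by passing to $M^{\operatorname{op}}$ and invoking Theorem \ref{t.main} there. The only difference is expository: the paper compresses the opposite-monoid step into one sentence, whereas you spell out the dictionary (self-duality of regularity and inverseness, identification of right inductivity for $M$ with left inductivity for $M^{\operatorname{op}}$), which is exactly the bookkeeping the paper leaves implicit.
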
 

\begin{proof} 
The equivalence (1)$\Leftrightarrow$(3) follows immediately from Lemma \ref{l.premain}, so we will be done if we can prove (1)$\Leftrightarrow$(2). Again by Lemma \ref{l.premain}, (1) implies that $M$ is an inverse monoid, and so (2) follows from Corollary \ref{c.LRI}. Conversely, if (2) holds then $M^{\operatorname{op}}$ is inverse by Lemma \ref{l.premain}, which implies that $M$ itself is inverse. Thus, (1) holds by Corollary \ref{c.LRI} again.
\end{proof}



\begin{myeg}[A non-regular inductive monoid] 
Fix $G$ and $n$. Consider the $\hat{G}$-linear monoid $M$ with $\hat{G}$-basis $\{x_1,\ldots , x_n, 1\}$, such that $x_ix_j = 0$ for all $1 \le i ,j \le n$. In other words, the non-zero elements of $M$ are of the form $gx_i = x_ig$ or $g$, for $g \in G$ and $1\le i \le n$. Then $M$ admits an ideal filtration 
\[ 
M_0 = \{0\} \subset M_1 = \bigoplus_{i=1}^n{\hat{G}x_i} \subset M_2 = M
\] 
Note that the only non-zero regular principal factor is $P(1) = M/M_1 \cong M^{\times}$, which is $\hat{G}$-linear as an $M$-representation. Thus, $M$ is left (and right) inductive. As expected, $\Rep(M,\hat{G})$ is not semisimple: $M$ is a $\hat{G}$-linear representation over itself, and $M_1$ is a non-zero, proper subrepresentation that does not admit a complement.
\end{myeg}

\begin{myeg}[$\hat{G}$-linear representations of $I_n(\hat{G})$]\label{e.ISMReps} $I_n(\hat{G})$ is a left-inductive $\hat{G}$-linear inverse monoid, and so $\Rep(I_n(\hat{G}),\hat{G})$ is semisimple by Theorem \ref{t.main}. Theorem \ref{t.CMP} asserts that every simple representation of $I_n(\hat{G})$ is isomorphic to $W{\uparrow_e}/N(W{\uparrow_e})$, where $e \in I_n(\hat{G})$ is an idempotent with $\mathcal{J}$-class $J$ and $W$ is a simple $\hat{G}$-linear representation of $G_J$. Now, $e = E_S$ for some subset $S \subset [n]$ with $|S| = d$, and hence $J = I_n(\hat{G})^{\le d}\setminus I_n(\hat{G})^{\le d-1}$ is the set of all rank-$d$ matrices in $I_n(\hat{G})$. Thus, $G_J = eI_n(\hat{G})e\cap J = \{ M_{S, \sigma, c} \mid \sigma \in \Symm(S)\} \cong G^d\rtimes S_d$ under the product $(c,\sigma )(d, \tau) = (c\times (d\circ \sigma^{-1}), \sigma \tau)$. A $\hat{G}$-linear representation of a group is in particular $\FF_1$-linear, and hence a simple representation $W \in \Rep(\hat{G_J},\hat{G})$ is just an orbit space $(G^d\rtimes S_d)/H$ which is a free $G$-set under left translation. The condition that $(G^d\rtimes S_d)/H$ be a free $G$-set is equivalent to $G \cap H$ being trivial, where $G$ is identified with the diagonal subgroup of $G^d$. Fixing such an $H$, we have a simple $\hat{G}$-linear $\hat{G}_J$-representation $W = (G^d\rtimes S_d)/H\cup \{0\}$. Now, $W{\uparrow_e}$ can be identified with the quotient of the representation $P(e)e$ by the equivalence relation $M_{S,f,c} \sim_H M_{S,g,d}$ if and only if $(g,d)^{-1}(f,c) \in H$. Under this identification, it is straightforward to verify that $N(W{\uparrow_e}) = \{0\}$ and hence $W{\uparrow_e}$ is already simple. We abbreviate the representation $W{\uparrow_e}$ as $V_{S,H}$. One can prove that $V_{S,H} \cong V_{T,K}$ if and only if there exists a bijection $\psi : T \rightarrow S$ and a $t \in G^S$ such that the induced isomorphism 
\[ 
F_{\psi,t}: G^S\rtimes \Symm(S) \rightarrow G^T\rtimes \Symm(T) 
\] 
\[ 
\displaystyle (c,f)\mapsto \left(\left(\frac{t\circ f^{-1}\psi}{t\circ \psi}\right)(c\circ \psi), \psi^{-1}f\psi\right)
\] 
satisfies $F_{\psi,t}(H) = K$. The reader is encouraged to compare these results to the construction of row-monomial representations in \cite{Tully1964}.
\end{myeg} 

\begin{myeg}[A non-regular $M$ with $\Rep(M,\hat{G})$ semisimple] 
The condition $\mathcal{R}V = 0$ for all $V \in \Rep(M,\hat{G})$ is vacuous if $M$ is regular, as $\mathcal{R} = 0$ in that case. We now exhibit a finite, $\hat{G}$-linear, left inductive monoid $M$ for which $\Rep(M,\hat{G})$ is semisimple and $\mathcal{R} \neq 0$. Let $M$ be the following collection of block-diagonal matrices: 
\[ 
M = \bigg\{ \left[ \begin{array}{cc} A & 0 \\ 0 & 0 \end{array}  \right] \bigg\}_{A \in I_2(\hat{G})}\cup \bigg\{\left[ \begin{array}{cc} 0 & 0 \\ 0 & g\cdot E_{12} \end{array} \right], \left[ \begin{array}{cc} g\cdot I & 0 \\ 0 & g\cdot I \end{array} \right] \bigg\}_{g \in G}
\] 
where $I$ is the $2\times 2$ identity matrix and $E_{12} = \left[ \begin{array}{cc} 0 & 1 \\ 0 & 0 \end{array} \right] \in I_2(\hat{G})$. Then $M$ is a $\hat{G}$-linear, left inductive submonoid of $I_4(\hat{G})$ and the subset $S:= M\setminus\bigg\{ \left[ \begin{array}{cc} 0 & 0 \\ 0 & g\cdot E_{12} \end{array} \right] \bigg\}_{g \in G}$ is a $*$-closed submonoid, hence an inverse monoid. Note that $M$ is not regular, as the element $W = \left[ \begin{array}{cc} 0 & 0 \\ 0 & E_{12} \end{array} \right]$ satisfies $XW = 0 = WX$ for all non-invertible $X$. In particular, we have $\mathcal{R} = \{0\} \cup \{ g\cdot W\mid g \in G\}$. We claim that $\mathcal{R}V = 0$ for all non-zero $V \in \Rep(M,\hat{G})$. Indeed, $V$ can be decomposed into a direct sum $V = V_1\oplus \cdots \oplus V_k$, where each $V_i$ is a simple $\hat{G}$-linear representation of $S$. Then each $V_i$ has an apex $\mathcal{J}_i$. If $\mathcal{J}_i$ contains a unit then $V_i$ is annihilated by every non-unit of $M$, and in particular $WV_i = 0$. Otherwise $\mathcal{J}_i$ contains a non-zero, non-unit idempotent $e_i$ and there exists a $v_i \in V_i$ with $e_iv_i = v_i$. For any non-zero element $w_i \in V_i$ with $v_i \neq w_i$, there exists an $x \in S$ with $w_i = xv_i$. Then $w_i = (xx^*)w_i = (xx^*x)v_i = xv_i = w_i$, and hence $w_i$ is fixed by the non-unit idempotent $xx^*$. Since $We = 0 = eW$ for all $e \in E(M)$ with $e \neq 1_M$, it follows that $WV_i = 0$ in this case as well. In other words, $WV = 0$ and so $\mathcal{R}V = 0$. From Theorem \ref{t.main}, it follows that $\Rep(M,\hat{G})$ is semisimple.
\end{myeg}


\section{Other Perspectives on Representations of Submonomial matrices}\label{s.Alt}

Naively, one might wish to treat $I_n$ as ``$n\times n$ matrices defined over $\FF_1$,'' with $I_n(\hat{G})$ playing an analogous role for different choices of scalars. In many ways, this is an appropriate identification: for any finite abelian $\hat{G}$, the $I_n(\hat{G})$'s are endomorphism monoids of $\hat{G}$-vector spaces, and so if $M$ is a finite $\hat{G}$-linear monoid, then functors $M \rightarrow \Vect_{\hat{G}}$ are the same thing as monoid homomorphisms $M \rightarrow I_n(\hat{G})$ that commute with scalar multiplication (for an appropriate $n$). Furthermore, when we specialize to $G = \{1\}$, the group of units of $I_n$ is the Weyl group of $\operatorname{GL}_n(k) = \M_n(k)^{\times}$, as would be expected from the theory of algebraic groups over $\FF_1$ \cite{LorAlgGroups2011}. However, there are instances where this analogy falters: notably, the $\FF_1$-representation theory of $I_n$ admits significant differences from the $k$-linear representations of $\M_n(k)$. Specifically, $M_n(k)$ is Morita equivalent to $k$, and so its category of finite-dimensional $k$-representations is equivalent to the category $\Vect_k$ of finite-dimensional $k$-vector spaces. In particular, it has only one simple representation up to isomorphism. In contrast, Example \ref{e.ISMReps} exhibits many simple $\FF_1$-linear representations of $I_n$. This is reflected in the structure of the monomial algebra $k[I_n]$, where $M_n(k)$ is seen to only be an idempotent subalgebra of $k[I_n]$. In fact, one has the following result, which is a specialization of \cite[Corollary 9.4]{Steinberg2016}: 

\begin{mythm} 
For any field $k$ and any positive integer $n$,  
\[
\displaystyle k[I_n(\hat{G})] \cong \prod_{d=1}^n{M_{{{n}\choose{d}}}(k[G^d\rtimes S_d])} 
\]
 as $k$-algebras. In particular, $k[I_n(\hat{G})]$ is semisimple as a $k$-algebra if and only if either $\operatorname{char}(k) = 0$, or $\operatorname{char}(k)>n$ and $\operatorname{char}(k)\nmid |G|$. 
\end{mythm}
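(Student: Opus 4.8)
The plan is to deduce the displayed isomorphism as a direct application of Munn's structure theorem for the algebra of a finite inverse semigroup, in the form of \cite[Corollary 9.4]{Steinberg2016}, and then to read off semisimplicity from Maschke's theorem together with an elementary divisibility analysis. First I would fix the convention that $k[I_n(\hat{G})]$ denotes the \emph{contracted} monoid algebra, in which the absorbing element $0_M$ is identified with the zero of the algebra; this is forced by the dimension count, since $\dim_k \prod_{d=1}^n M_{\binom{n}{d}}(k[G^d\rtimes S_d]) = \sum_{d=1}^n \binom{n}{d}^2\, d!\,|G|^d$ is exactly the number of nonzero elements of $I_n(\hat{G})$, and it is the natural choice given that objects of $\Rep(M,\hat{G})$ send $0_M$ to $0$. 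Since $I_n(\hat{G})$ is a finite inverse monoid, Munn's theorem applies: its contracted algebra decomposes as a finite product, indexed by the nonzero $\mathcal{J}$-classes $J$, of matrix algebras $M_{r_J}(k[G_J])$, where $G_J$ is the maximal subgroup at $J$ and $r_J$ is the number of idempotents lying in $J$.

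Next I would identify these three pieces of data for $I_n(\hat{G})$. By Definition \ref{d.rank} the nonzero $\mathcal{J}$-classes are exactly the sets $J_d$ of rank-$d$ submonomial matrices for $1 \le d \le n$. The idempotents of $I_n(\hat{G})$ are precisely the matrices $E_S$ with $S \subseteq [n]$ (by Remark \ref{r.Gidem}), and $E_S$ has rank $|S|$; hence $J_d$ contains exactly $\binom{n}{d}$ idempotents, giving matrix size $r_{J_d} = \binom{n}{d}$. Finally, the maximal subgroup is $G_{J_d} \cong G^d\rtimes S_d$, as computed in Example \ref{e.ISMReps}. Substituting these into Munn's decomposition yields
\[
k[I_n(\hat{G})] \cong \prod_{d=1}^n M_{\binom{n}{d}}\!\left(k[G^d\rtimes S_d]\right),
\]
which is the asserted isomorphism; the dimension count above serves as a sanity check.

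For the semisimplicity statement I would argue as follows. A finite product of algebras is semisimple if and only if each factor is, and by Morita invariance $M_r(A)$ is semisimple if and only if $A$ is. Thus $k[I_n(\hat{G})]$ is semisimple if and only if each group algebra $k[G^d\rtimes S_d]$ is semisimple for $1 \le d \le n$. By Maschke's theorem this happens if and only if $\operatorname{char}(k) = 0$, or $\operatorname{char}(k) = p$ is a prime dividing none of the orders $|G^d\rtimes S_d| = |G|^d\, d!$ with $1 \le d \le n$. The remaining point is the divisibility analysis: a prime $p$ divides $|G|^d\, d!$ for some $d \le n$ if and only if $p \mid |G|$ (take $d=1$) or $p \le n$ (since $p \mid d!$ for some $d \le n$ precisely when $p \le n$, realized at $d = p$). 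Negating, semisimplicity holds exactly when $\operatorname{char}(k) = 0$, or $\operatorname{char}(k) = p > n$ with $p \nmid |G|$, as claimed.

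The bulk of the technical content is imported through Munn's theorem, so no hard obstacle remains in the isomorphism itself; the points demanding care are the contracted-algebra convention and the correct identification of the combinatorial invariants (maximal subgroups and idempotent counts), both supplied by Example \ref{e.ISMReps} and Remark \ref{r.Gidem}. The only genuinely computational step is the elementary number theory reconciling ``$p \nmid |G|^d\, d!$ for all $d \le n$'' with ``$p > n$ and $p \nmid |G|$,'' and that is routine.
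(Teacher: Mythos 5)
Your proof is correct and takes essentially the same route as the paper: the paper states this theorem without proof, citing it as a specialization of Steinberg's Corollary 9.4, and your argument simply fills in that specialization (the rank $\mathcal{J}$-classes, the $\binom{n}{d}$ idempotents $E_S$ per class, the maximal subgroups $G^d \rtimes S_d$) together with the routine Maschke--Morita reduction for semisimplicity. Your explicit handling of the contracted-algebra convention is also the right reading of the statement, since the displayed product omits the rank-zero factor $k$ that the ordinary monoid algebra would contribute.
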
 

Nevertheless, the $\FF_1$-representation theory of $I_n$ still retains some of the features we would expect from the theory of semisimple rings. Importantly, Theorem \ref{t.main} asserts that $\Rep(I_n,\FF_1)$ is a semisimple category, and the simple representation $\FF_1^n := V_{\{1\},\{1\}}$ is a direct $\FF_1$-analog of the simple representation $k^n$ of $\M_n(k)$. In fact, one can define a functor 
\[ 
F_k: \Rep(I_n,\FF_1) \rightarrow \Rep(\M_n(k),k)
\] 
satisfying and $F(\FF_1^n) = k^n$, $F(S) = 0$ for all other simples, and $F(V\oplus W) \cong F(V)\oplus F(W)$ for all $V, W \in \Rep(I_n,\FF_1)$. This functor is surjective on objects and faithful, but not full. Of course, we have analogous results for $I_n(\hat{G})$, where the simple representation $\FF_1^n$ is replaced with $\hat{G}^n$. 

In light of all of this, it is natural to ask whether the full proto-abelian subcategory of $\Rep(I_n,\FF_1)$ generated by $\FF_1^n$ can be recovered as the natural category of $\FF_1$-representations of some better-behaved substitute for $I_n$. Replacing $I_n$ with the sub-monoid $R_n:= I_n^{\le 1} \sqcup \{I_n\}$ works, because $\Rep(R_n, \FF_1) \cong \Vect_{\FF_1}$ as finitary proto-abelian categories and $k[R_n] \cong M_n(k)$. However, this is less useful than it might at first appear: although the correct category and monoid ring are recovered, $R_n^{\times}$ is trivial, and only rarely can functors $M\rightarrow \Vect_{\FF_1}$ be interpreted as monoid homomorphisms $M\rightarrow R_n$. 

What follows in this section are three attempts to build this better-behaved substitute by adding higher structure to $I_n(\hat{G})$: the first, and most successful, considers a poset structure on $I_n(\hat{G})$ that essentially recovers addition as a partial operation on the monoid. We end up with an object similar to a quantale, whose natural category of representations can be identified with $\Vect_{\hat{G}}$. Furthermore, these representations can be interpreted as representations over a certain quotient of a monoid algebra over a field $k$, which is isomorphic to $M_n(k)$ when $G = \{1\}$. This extra structure provides one explanation for the difference between classical and non-additive representation theory. 

 The following two attempts are not as successful as the first, but are included because they are likely to be of interest when studying how representations behave under scalar extension. The second employs the theory of bialgebras over $\ZZ$, and the third proceeds by interpreting $I_n$ as a functor on abelian groups. No attempt is made to provide an exhaustive approach to either perspective, and they will be explored more fully in a future work. For now, we restrict our goals to defining natural representation categories for each, demonstrating that each contains more than one simple object, and that each contains infinitely-many non-isomorphic objects.


\subsection{Monoids with Partial Order}  

A curious impact of only treating $I_n(\hat{G})$ as a monoid is that it ignores relationships between its elements which are determined by the addition operation of $M_n(k[G])$. Of course, $I_n(\hat{G})$ is not closed under addition, so it only defines a partial operation on $I_n(\hat{G})$. Instead of taking this route, we are able to adequately capture the additive relationships through a poset structure on $I_n(\hat{G})$. This construction is similar to that of a unital quantale, but without requiring the poset in question to be a complete lattice (or even a join-semilattice).

\begin{mydef}
For $A, B \in I_n(\hat{G})$, we write $A \le B$ if and only if for all $x \in \hat{G}^{\oplus n}$, $Ax \neq 0$ implies $Bx = Ax$.
\end{mydef} 

The relation $\le $ defines a partial order on $I_n(\hat{G})$. Note that for any subset $I \subseteq I_n(\hat{G})$, $U := \bigvee_{A \in I}{A}$ exists if and only if for all $A, B \in I$, $x \in \operatorname{supp}(A)\cap \operatorname{supp}(B)$ implies $Ax = Bx$. Then $U$ is the element with $\operatorname{supp}(U) = \bigcup_{A \in I}{\supp (A)}$ defined on $x \in \operatorname{supp}(U)$ as $Ux = Ax$, for any $A \in I$ with $x \in \operatorname{supp}(A)$. Dually, $D:= \bigwedge_{A \in I}{A}$ always exists, as the element with 
\[
\operatorname{supp}(D) = \bigg\{x \in \bigcap_{A \in I}{\operatorname{supp}(A)} \mid Ax = Bx\text{ for all $A,B \in I$} \bigg\}
\]
 defined on $x \in \operatorname{supp}(D)$ as $Dx = Ax$ for any $A \in I$. Furthermore, for any $B \in I_n(\hat{G})$ we have 
\[ 
B\left(\bigvee_{A \in I}{A} \right) = \bigvee_{A \in I}{BA}
\] 
\[ 
B\left( \bigwedge_{A \in I}{A} \right) = \bigwedge_{A\in I}{BA}
\]
and similarly for right multiplication by $B$.  

\begin{mydef}\label{d.poset}
Suppose that $\mathcal{M} = (M,\preceq)$, where $M$ is a finite $\hat{G}$-linear monoid and $\preceq$ is a partial order on $M$ satisfying the following conditions: 
\begin{enumerate}  
\item $0_M$ is the unique minimal element of $M$ with respect to $\preceq$.
\item For all subsets $I \subseteq M$, if $\bigvee_{a \in I}{a}$ exists then  
\[ 
x\left(\bigvee_{a \in I}{a} \right) = \bigvee_{a \in I}{xa}
\]  
\[ 
\left(\bigvee_{a \in I}{a} \right)x = \bigvee_{a \in I}{ax}
\] 

\item For all subsets $I \subseteq M$, if $\bigwedge_{a \in I}{a}$ exists then  
\[ 
x\left(\bigwedge_{a \in I}{a} \right) = \bigwedge_{a \in I}{xa}
\]  
\[ 
\left(\bigwedge_{a \in I}{a} \right)x = \bigwedge_{a \in I}{ax}
\] 
\end{enumerate} 
Then the category $\Rep(\mathcal{M},\hat{G})$ is the full subcategory of $\Rep(M,\hat{G})$ whose objects are precisely those $V: M \rightarrow \Vect_{\hat{G}}$ satisfying the following: whenever $\bigvee_{a \in I}{a}$ exists, $\bigvee_{a \in I}{V(a)}$ exists and 
\[ 
\bigvee_{a \in I}{V(a)} = V\left(\bigvee_{a \in I}{a} \right).
\]
\end{mydef} 
For any such $\mathcal{M} = (M, \preceq)$, the zero representation is in $\Rep(\mathcal{M},\hat{G})$. Furthermore, we have the following: 

\begin{pro} 
Let $\mathcal{M} = (M,\preceq)$ be as in Definition \ref{d.poset}. Then $\Rep(\mathcal{M},\hat{G})$ is closed (as a subcategory of $\Rep(M,\hat{G})$) under sub-objects and quotients. In particular, $\Rep(\mathcal{M},\hat{G})$ is a proto-abelian category.
\end{pro}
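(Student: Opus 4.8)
The plan is to establish the two closure properties directly, by showing that the defining join condition of Definition \ref{d.poset} is preserved under restriction to an invariant subspace and under passage to a quotient; proto-abelianness will then follow formally from the fact that $\Rep(M,\hat{G})$ is already proto-abelian and that the pullback and pushout completions demanded by the axioms realize sub-objects and quotients, respectively. Throughout, I regard a representation $V$ as the monoid homomorphism $a \mapsto V(a)$ into $\End_{\hat{G}}(V(\ast))$, and I use that for $f,g \in \End_{\hat{G}}(U)$ the relation $f \preceq g$ means $f(x) \neq 0 \Rightarrow g(x) = f(x)$ for all $x \in U$, so that $\bigvee_{a} f_a$ exists precisely when the $f_a$ agree wherever two of them are simultaneously nonzero, in which case it is their common extension.

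For closure under sub-objects, let $V \in \Rep(\mathcal{M},\hat{G})$ and let $W \le V$ be a subrepresentation, so that $W(a) = V(a)|_W$ and $W$ is $V(a)$-invariant. Suppose $u = \bigvee_{a \in I} a$ exists in $M$; since $V \in \Rep(\mathcal{M},\hat{G})$ we have $\bigvee_{a\in I} V(a) = V(u)$ in $\End_{\hat{G}}(V)$. First I would check that $\bigvee_{a\in I} W(a)$ exists: for $x \in W$ with $W(a)x = V(a)x \neq 0$ and $W(b)x = V(b)x \neq 0$, compatibility of the $V(a)$ forces $V(a)x = V(b)x$, so the $W(a)$ are pairwise compatible. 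Then, evaluating at any $x \in W$, the join $\bigvee_{a} W(a)$ agrees with $\big(\bigvee_{a} V(a)\big)(x) = V(u)(x) = W(u)(x)$ (the last equality using invariance, $V(u)(x)\in W$), giving $\bigvee_{a\in I} W(a) = W(u)$, as required.

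For closure under quotients, let $V \in \Rep(\mathcal{M},\hat{G})$, $W \le V$, and $\pi : V \to V/W$ the canonical projection, so that $(V/W)(a)(\bar{x}) = \overline{V(a)x}$, where $\bar{y}$ denotes the class of $y$ (equal to $y$ if $y \notin W$ and to $0$ otherwise). With $u = \bigvee_{a\in I} a$ and $\bigvee_{a} V(a) = V(u)$ as before, I would first verify existence of $\bigvee_{a}(V/W)(a)$: if $\overline{V(a)x} \neq 0$ and $\overline{V(b)x} \neq 0$ then $V(a)x, V(b)x \notin W$ are nonzero, whence $V(a)x = V(b)x$ and the classes agree. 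The crux is the identity $\big(\bigvee_{a}(V/W)(a)\big)(\bar{x}) = \overline{\big(\bigvee_{a} V(a)\big)(x)}$, which I would prove by the dichotomy: if some $V(a)x \notin W$ then both sides equal $V(a)x$, while if every $V(a)x \in W$ then both sides vanish (the right-hand side because $\big(\bigvee_a V(a)\big)(x)$ is then either $0$ or some $V(a)x \in W$). This yields $\bigvee_{a}(V/W)(a) = \overline{V(u)} = (V/W)(u)$.

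Finally, to conclude that $\Rep(\mathcal{M},\hat{G})$ is proto-abelian, I would note that it is a full subcategory of the proto-abelian category $\Rep(M,\hat{G})$, contains the zero representation, and inherits all monomorphisms and epimorphisms. Axioms (1) and (2) are immediate, and Axiom (3) is inherited because, by the closure just proved, pullbacks and pushouts of admissible diagrams with objects in $\Rep(\mathcal{M},\hat{G})$ are computed exactly as in $\Rep(M,\hat{G})$. For Axioms (4) and (5) the completing object is, by Construction \ref{c.Gpullback}, a sub-object of the term $Y$ carrying the admissible epimorphism (hence in $\Rep(\mathcal{M},\hat{G})$ by sub-object closure) and, by Construction \ref{c.Gpushout}, a quotient of the term $Y$ carrying the admissible monomorphism (hence in $\Rep(\mathcal{M},\hat{G})$ by quotient closure). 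I expect the main obstacle to be the quotient case: one must track carefully when $V(a)x$ falls into $W$ to see that forming joins commutes with $\pi$, and confirm that the join computed in the quotient endomorphism monoid genuinely coincides with the projected join rather than merely being bounded above by it.
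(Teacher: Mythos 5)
Your proposal is correct and takes essentially the same approach as the paper: you verify the join-preservation condition of Definition \ref{d.poset} directly for subrepresentations and quotients using the explicit description of joins in $\End_{\hat{G}}$ (the paper organizes the identical computation via an ordered basis adapted to $W \le V$ and the resulting block upper-triangular form of $V(a)$, while you argue element-wise with the dichotomy on whether $V(a)x$ lands in $W$), and your deduction of proto-abelianness is exactly the paper's, namely that the completions required by Axioms (4) and (5) are respectively a sub-object and a quotient of an object already in $\Rep(\mathcal{M},\hat{G})$. The "crux" you flag in the quotient case is handled correctly, so there is no gap.
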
  

\begin{proof} 
Let $V \in \Rep(\mathcal{M},\hat{G})$, and let $W$ be a $\hat{G}$-linear $M$-subrepresentation of $V$. Fix an ordered basis $\{w_1,\ldots , w_d, v_{d+1},\ldots , v_n\}$ for $V$, where $\{w_1,\ldots , w_d\}$ is a basis for $W$ as a $\hat{G}$-vector space. Then $\{v_{d+1},\dots, v_n\}$ is an ordered basis for the quotient representation $V/W$, and with respect to this ordered basis, it is easy to check that 
\[ 
V(a) = \left[ \begin{array}{cc} W(a) & * \\ 0 & (V/W)(a) \end{array} \right]
\] 
for all $a \in M$. Suppose $u := \bigvee_{a \in I}{a}$ exists in $M$. By hypothesis, we have $V(u) = \bigvee_{a \in I}{V(a)}$ in $\End_{\hat{G}}(V) \cong I_n(\hat{G})$. Then for all $1 \le i \le d$, $W(u)(w_i) = V(u)(w_i) \neq 0$ implies that $W(u)(w_i) =V(u)(w_i) = V(a)(w_i) = W(a)(w_i)$ for all $a\in I$ with $V(a)(w_i) \neq 0$. But then $\bigvee_{a \in I}{W(a)}$ exists and $W(u) = \bigvee_{a \in I}{W(a)}$. Similarly, if $(V/W)(u)(v_j) \neq 0$, then $(V/W)(u)(v_j)  = V(u)(v_j) \neq 0$ and so $(V/W)(u)(v_j) = V(u)(v_j) = V(a)(v_j) = (V/W)(a)(v_j)$ for all $a \in I$ with $V(a)(v_j) \neq 0$. It follows that $\bigvee_{a \in I}{(V/W)(a)}$ exists and $(V/W)(u) = \bigvee_{a \in I}{(V/W)(a)}$. 
To verify that $\Rep(\mathcal{M},\hat{G})$ is proto-abelian, simply note that diagrams in Axioms (4) or (5) may be completed to bi-Cartesian squares in $\Rep(M,\hat{G})$ as in Axiom (3). Since $\Rep(\mathcal{M},\hat{G})$ is closed under sub-objects, $X \in \Rep(\mathcal{M},\hat{G})$ if $Y,X',Y' \in \Rep(\mathcal{M},\hat{G})$; since $\Rep(\mathcal{M},\hat{G})$ is closed under quotients, $Y' \in \Rep(\mathcal{M},\hat{G})$ if $X,Y,X' \in \Rep(\mathcal{M},\hat{G})$.
\end{proof} 

\begin{cor} 
Let $\mathcal{M} = (I_n(\hat{G}),\le)$. Then $\Rep(\mathcal{M},\hat{G})$ is the full subcategory of $\Rep(I_n(\hat{G}),\hat{G})$ whose objects are direct sums of the simple object $\hat{G}^{\oplus n}$.
\end{cor}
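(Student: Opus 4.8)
The plan is to prove the two object-class inclusions separately, exploiting that $\Rep(I_n(\hat{G}),\hat{G})$ is semisimple with simple objects classified as the $V_{S,H}$ of Example \ref{e.ISMReps} (Theorem \ref{t.main}), together with the fact, established in the preceding proposition, that $\Rep(\mathcal{M},\hat{G})$ is closed under sub-objects and quotients. The single observation driving everything is a join identity in $\mathcal{M}=(I_n(\hat{G}),\le)$: the diagonal idempotents $E_{11},\ldots,E_{nn}$ have pairwise disjoint supports $\supp(E_{ii})=\{e_i\}$, so $\bigvee_{i=1}^{n}E_{ii}$ exists and equals the identity matrix $\id$. Thus membership in $\Rep(\mathcal{M},\hat{G})$ is governed by how a representation handles this one join.

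For the inclusion $\supseteq$, I would first note that $\hat{G}^{\oplus n}$ itself lies in $\Rep(\mathcal{M},\hat{G})$: here the functor is essentially the identity on $\End_{\hat{G}}(\hat{G}^{\oplus n})\cong I_n(\hat{G})$, sending each $a$ to its own action, so every existing join is preserved tautologically. Next I would verify closure under finite direct sums. For $V,W$ in the category and an existing join $u=\bigvee_{a\in I}a$, the maps $(V\oplus W)(a)=V(a)\oplus W(a)$ are block-diagonal, their supports split across the two blocks, and two such elements agree on common support exactly when their $V$-parts and $W$-parts separately agree; since $\bigvee_a V(a)=V(u)$ and $\bigvee_a W(a)=W(u)$ exist, the block-diagonal join exists and equals $V(u)\oplus W(u)=(V\oplus W)(u)$. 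With the zero object, this shows every finite direct sum of copies of $\hat{G}^{\oplus n}$ belongs to $\Rep(\mathcal{M},\hat{G})$.

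For the inclusion $\subseteq$, take $V\in\Rep(\mathcal{M},\hat{G})$. By semisimplicity write $V\cong\bigoplus_k S_k$ with each $S_k$ simple; since $\Rep(\mathcal{M},\hat{G})$ is closed under sub-objects, each $S_k$ again lies in $\Rep(\mathcal{M},\hat{G})$, so it suffices to show $\hat{G}^{\oplus n}$ is the only simple object of $\Rep(\mathcal{M},\hat{G})$. Let $S$ be such a simple with apex the rank-$d$ $\mathcal{J}$-class. If $d\ge 2$, then each rank-$1$ element $E_{ii}$ generates the ideal of matrices of rank $\le 1$, which omits the rank-$d$ class; hence $E_{ii}\in\operatorname{Ann}_M(S)$ and $S(E_{ii})=0$ for every $i$. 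But $S$ preserves the join $\id=\bigvee_i E_{ii}$, forcing $\id_S=S(\id)=\bigvee_i S(E_{ii})=0$ and so $S=0$, contradicting simplicity. Therefore $d=1$; since for $d=1$ one has $G_J\cong G$, whose only free simple $G$-set is $\hat{G}$, the classification of Example \ref{e.ISMReps} (via Theorem \ref{t.CMP}) gives $S\cong\hat{G}^{\oplus n}$. Thus $V\cong(\hat{G}^{\oplus n})^{\oplus m}$, completing the characterization.

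The conceptual crux is the reverse inclusion: recognizing that the additive relationship $\id=\bigvee_i E_{ii}$, once required to be respected, is incompatible with apex above rank $1$, because the rank-$1$ idempotents annihilate any higher-apex simple while their join is the identity. The remaining tasks—verifying the join identity from the definition of $\le$, confirming via the apex formula $\operatorname{Ann}_M(S)=I_J$ that elements of rank $<d$ annihilate $S$, and the block-diagonal join computation for direct-sum closure—are routine but should be carried out carefully, using Remark \ref{r.Gidem} to handle how idempotents act on orbits.
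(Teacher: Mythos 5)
Your proof is correct and follows essentially the same route as the paper's: both hinge on the join identity $I_n = \bigvee_{i=1}^n E_{ii}$, which forces some $E_{ii}$ to act non-trivially on any object (and, via closure under sub-objects, on every simple summand), and then invoke semisimplicity and the classification of simples to conclude each summand is $\hat{G}^{\oplus n}$. You merely spell out two steps the paper leaves implicit — the apex argument showing rank $d \ge 2$ is impossible, and the block-diagonal join computation for closure under direct sums — which is fine but not a different approach.
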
 
\begin{proof} 
Let $V \in \Rep(\mathcal{M},\hat{G})$. Since $I_n = \bigvee_{i=1}^n{E_{ii}}$ in $I_n(\hat{G})$, we must have $\id_V = V(I_n) = \bigvee_{i=1}^n{V(E_ii)}$, and so some $E_{ii}$ acts via a non-zero map on $V$. But this must remain true for any sub-representation of $V$, in particularly for any simple direct summand of $V$. But then every simple direct summand of $V$ must be isomorphic to $\hat{G}^{\oplus n}$. Since any direct sum of $\hat{G}^{\oplus n}$ is clearly in $\Rep(\mathcal{M},\hat{G})$, the claim follows.
\end{proof}   

When $G = \{1\}$, the corollary above tells us that the category generated by $\FF_1^{\oplus n}$ is the category of $\FF_1$-representations of the object $(I_n,\le )$.

By condition (1) of Definition \ref{d.poset}, $1 = 1\vee 0$ in $M$. A \emph{complete system of orthogonal idempotents} in $M$ is a finite set $\{e_1,\ldots, e_n\}$ such that $e_ie_j = \delta_{ij}e_j$ for $1\le i,j \le n$ and $1 = \bigvee_{i=1}^n{e_i}$. Note that $e_i^2 = e_i$ is redundant, as condition (2) along with orthogonality implies $e_j = e_j\cdot 1 = e_j\left(\bigvee_{i=1}^n{e_i} \right) = \bigvee_{i=1}^n{e_je_i} = 0\vee e_j^2 = e_j^2$ for all $1\le j \le n$. 

Let $\mathcal{C}(M)$ denote the set of all complete systems of orthogonal idempotents in $M$. Since $M$ is finite and $\{0,1\} \in \mathcal{C}(M)$, $\mathcal{C}(M)$ is a finite non-empty set. 

\begin{mydef} 
Let $\mathcal{M} = (M,\preceq)$ be as in Definition \ref{d.poset}. For a field $k$, define 
\[ 
k[\mathcal{M}] := k[M]/\left( a = \sum_{e \in E}{ae} = \sum_{e \in E}{ea} \mid a\in M, E \in \mathcal{C}(M) \right)
\]  
where $k[M]$ denotes the usual monoid algebra of $M$.
\end{mydef} 

Note that for any $\hat{G}$-linear monoid $M$, $V \in \Rep(M,\hat{G})$, and field $k$, the free $k$-vector space on $V\setminus\{0\}$ is a representation of the monoid algebra $k[M]$ in a natural way. Thus, we have an extension of scalars functor 
\[ 
-\otimes k : \Rep(M,\hat{G}) \rightarrow k[M]-\operatorname{mod}
\]

\begin{pro} 
Let $\mathcal{M} = (M,\preceq)$ be as in Definition \ref{d.poset}. For any field $k$, the extension of scalars functor 
\[ 
-\otimes k : \Rep(M,\hat{G}) \rightarrow k[M]-\operatorname{mod}
\]  
restricts to a functor 
\[ 
-\otimes k : \Rep(\mathcal{M},\hat{G}) \rightarrow k[\mathcal{M}]-\operatorname{mod}
\]  
\end{pro}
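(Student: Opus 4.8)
The plan is to show that, for $V \in \Rep(\mathcal{M},\hat{G})$, the $k[M]$-module $V\otimes k$ has the defining relations of $k[\mathcal{M}]$ in its annihilator, so that its $k[M]$-action factors through the quotient $k[\mathcal{M}] = k[M]/I$. Recall that $V\otimes k$ is the free $k$-vector space on $V\setminus\{0\}$, with $a\in M$ acting on a basis vector $v$ by $a\cdot v = V(a)(v)$, interpreted as the zero vector when $V(a)(v) = 0_V$. Since $I$ is the two-sided ideal generated by the elements $a - \sum_{e\in E}ae$ and $a - \sum_{e\in E}ea$ (for $a\in M$ and $E\in\mathcal{C}(M)$), and $V\otimes k$ is spanned over $k$ by $V\setminus\{0\}$, it suffices to check that each such generator annihilates every basis vector $v$; equivalently, that the operators $a$, $\sum_{e\in E}ae$, and $\sum_{e\in E}ea$ agree on $V\otimes k$.

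The crux is a structural statement about how a complete system of orthogonal idempotents acts on $V$. Fix $E\in\mathcal{C}(M)$. Because objects of $\Rep(\mathcal{M},\hat{G})$ preserve the join $1 = \bigvee_{e\in E}e$ (Definition \ref{d.poset}), functoriality gives $\id_V = V(1) = \bigvee_{e\in E}V(e)$ in $\End_{\hat{G}}(V)$. Each $V(e)$ is a $\hat{G}$-linear idempotent, so by Remark \ref{r.Gidem} it fixes every element of its support $\supp(V(e)) = \{x \mid V(e)(x)\neq 0\}$ and sends every other element to $0$. Orthogonality gives $V(e)V(e') = V(ee') = 0$ for $e\neq e'$, which forces these supports to be pairwise disjoint (if $v\in\supp(V(e))\cap\supp(V(e'))$ then $V(e)V(e')(v) = V(e)(v) = v\neq 0$, a contradiction), while the description of $\bigvee$ in $I_n(\hat{G})$ shows $\bigcup_{e\in E}\supp(V(e)) = \supp(\id_V) = V\setminus\{0\}$. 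Hence $\{\supp(V(e))\}_{e\in E}$ partitions $V\setminus\{0\}$: each $v\neq 0$ is fixed by exactly one $e\in E$ and killed by all the others.

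With this partition the two relations are immediate on a basis vector $v$. For $\sum_{e\in E}ae$, we have $(ae)\cdot v = a\cdot\bigl(V(e)(v)\bigr)$, which equals $a\cdot v$ for the unique $e$ fixing $v$ and vanishes for every other $e$, so $\sum_{e\in E}(ae)\cdot v = a\cdot v$. For $\sum_{e\in E}ea$, set $w = V(a)(v)$; if $w = 0$ then both sides vanish, and if $w\neq 0$ then $(ea)\cdot v = V(e)(w)$ equals $w$ for the unique $e$ fixing $w$ and vanishes otherwise, giving $\sum_{e\in E}(ea)\cdot v = w = a\cdot v$. Thus $I\cdot(V\otimes k) = 0$ and $V\otimes k$ carries a natural $k[\mathcal{M}]$-module structure. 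For morphisms, a map $\phi$ in $\Rep(\mathcal{M},\hat{G})$ is carried by $-\otimes k$ to a $k[M]$-linear map between two modules on which $I$ acts trivially, hence automatically $k[\mathcal{M}]$-linear; so the functor restricts as claimed.

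The main obstacle is isolating and proving the partition statement of the second paragraph, as this is the only point where membership of $V$ in the subcategory $\Rep(\mathcal{M},\hat{G})$, rather than merely in $\Rep(M,\hat{G})$, is genuinely used. Once the join-preservation condition is converted, via Remark \ref{r.Gidem} together with orthogonality, into an honest partition of $V\setminus\{0\}$ by the supports of the $V(e)$, the verification of both relations becomes a purely formal bookkeeping on basis vectors.
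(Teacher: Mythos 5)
Your proof is correct and takes essentially the same approach as the paper: the paper likewise uses the preserved join $\id_V = \bigvee_{e \in E} V(e)$ of the orthogonal $\hat{G}$-linear idempotents to get the operator identity $V(1) = \sum_{e \in E} V(e)$ on $V \otimes k$ (phrased there as the observation that a complete orthogonal system of idempotents in $I_n(\hat{G})$ sums to $I_n$ in $M_n(k[G])$), from which $\rho(a) = \sum_{e \in E}\rho(ae) = \sum_{e \in E}\rho(ea)$ and the descent to $k[\mathcal{M}]$ follow. Your support-partition argument is simply the element-level unpacking of that matrix identity, so the two proofs differ only in presentation.
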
  

\begin{proof} 
Note that $E \in \mathcal{C}(I_n(\hat{G}))$ if and only if $I_n = \sum_{e \in E}{e}$ in $M_n(k[G])$. If $V \in \Rep(\mathcal{M},\hat{G})$ has structure map $\rho : M \rightarrow \End_{\hat{G}}(V)$, then for all $a \in A$ and $E \in \mathcal{C}(M)$ we have $I_n = \rho(1) = \rho \left(\bigvee_{e \in E}{e}\right) = \bigvee_{e \in E}{\rho(e)} = \sum_{e \in E}{\rho(e)}$ and hence $\rho(a) = \rho(a)I_n = \rho(a)\sum_{e \in E}{\rho(e)} = \sum_{e \in E}{\rho(a)\rho(e)} = \sum_{e \in E}{\rho(ae)}$ for all $a \in M$. Similarly, $\rho(a) = \sum_{e \in E}{\rho(ea)}$ and so the $k$-algebra map $k[M] \rightarrow \End_k(V\otimes k)$ descends to a map $k[\mathcal{M}] \rightarrow \End_k(V\otimes k)$.
\end{proof} 

When $\mathcal{M} = (I_n,\le )$ it is easy to check that $k[\mathcal{M}] \cong M_n(k)$ and so we have a functor  
\[
\Rep(\mathcal{M},\FF_1) \rightarrow M_n(k)-\operatorname{mod} 
\]
 that is faithful and surjective on objects. In this way, we can recover the representation theory of $M_n(k)$ from the $\FF_1$-representation theory of $(I_n,\le )$.


\subsection{Bialgebras of Submonomial Matrices over Pointed Abelian Groups}   

In this section we consider symmetric inverse monoids as certain affine monoid schemes. Our constructions are well-known from the theory of Hopf algebras and affine group schemes, see for instance \cite{Waterhouse}. Let $\operatorname{CRing}$ denote the category of commutative rings with $1$, and $\operatorname{Mon}_0$ the category of monoids with absorbing element. For a fixed commutative ring $R$, let $R$-$\operatorname{CAlg}$ denote the category of all commutative $R$-algebras. In particular, $\operatorname{CRing} = \mathbb{Z}$-$\operatorname{CAlg}$. If $\M_n(\ZZ)$ denotes the $n\times n$-matrices, considered as an affine scheme over $\ZZ$, then $\M_n(\ZZ) \cong \mathbb{A}^{n^2}$ and so the coordinate ring of $\M_n(\ZZ)$ is simply the polynomial ring in $n^2$ variables
\[
\ZZ[\M_n(\ZZ)] = \ZZ[x_{ij} \mid 1 \le i,j \le n]. 
\]  
The coordinate ring $\ZZ[\M_n(\ZZ)]$ is a bialgebra over $\ZZ$ with comultiplication $\Delta (x_{ij}) = \sum_k{x_{ik}\otimes x_{kj}}$ and counit $\epsilon (x_{ij}) = \delta_{ij}$. The ideal $\mcI(\MM_n(\ZZ)) = \langle x_{ik}x_{jk}, x_{ki}x_{kj} \mid 1 \le k \le n, 1 \le i < j \le n \rangle$ is in fact a biideal, meaning that $\Delta(\mcI(\MM_n(\ZZ))) \subseteq \mcI(\MM_n(\ZZ))\otimes \ZZ[\M_n(\ZZ)] + \ZZ[\M_n(\ZZ)]\otimes\mcI(\MM_n(\ZZ))$. Hence the quotient algebra 
\[ 
\ZZ[\MM_n(\ZZ)] = \ZZ[\M_n(\ZZ)]/\mcI(\MM_n(\ZZ))
\]
inherits the structure of a bialgebra over $\ZZ$. We let $\MM_n(\ZZ) := \operatorname{CRing}(\ZZ[\MM_n(\ZZ)], -)$ denote the corresponding functor $\operatorname{CRing} \rightarrow \operatorname{Mon}_{0}$. If $G$ is a finite multiplicative abelian group, then $\MM_n(\hat{G})$ will denote the functor $\ZZ[G]$-$\operatorname{CAlg}\rightarrow \operatorname{Mon}_0$ obtained by extension of scalars along the natural inclusion $\ZZ \hookrightarrow \ZZ[G]$. Note that the bialgebra representing $\MM_n(\hat{G})$ is simply $\ZZ[G]\otimes_{\ZZ}\ZZ[\MM_n(\ZZ)]$. 

\begin{convention} 
For the rest of this section, fix a finite multiplicative abelian group $G$. To ease notation, we define $R:= \ZZ[G]$ and abbreviate $\MM_n(\hat{G})$ as $\MM_n$. Thus, the $R$-bialgebra representing $\MM_n$ will be denoted as $R[\MM_n]$.
\end{convention}

For any $A \in R$-$\operatorname{CAlg}$, a point $f \in \MM_n(A)$ is a $R$-algebra homomorphism $f : R[\MM_n] \rightarrow A$. This is specified by an $n\times n$ matrix $\left( f_{ij} \right)$ with coefficients in $A$ satisfying $f_{ik}f_{jk} = 0 = f_{ki}f_{kj}$ for all $k$ and $i \neq j$. Clearly, this includes all submonomial matrices with entries in $A$. If $G = \{1\}$ and $A$ is an integral domain, any $A$-point of $\MM_n$ arises from a submonomial matrix.   

\begin{rmk} 
If $T$ is a finite set with $|T| = d$, we let $\MM_T \cong \MM_d$ be the affine scheme of submonomial matrices whose rows and columns are indexed by $T$.
\end{rmk}

\begin{mydef} 
A \emph{$\hat{G}$-linear representation of $\MM_n$} of dimension $d$ is an $R$-bialgebra map  
\[ 
\rho: R[\MM_d]\rightarrow R[\MM_n] 
\]
 satisfying the following: for any $R$-algebra homomorphism $m : R[\MM_n] \rightarrow R$ satisfying $m(x_{ij}) \in \hat{G}$ for all $1 \le i,j \le n$, the $R$-algebra homomorphism $m\circ \rho : R[\MM_d]\rightarrow R$ satisfies $[m\circ \rho](x_{kl}) \in \hat{G}$ for all $1\le k, l \le d$. 
\end{mydef}  

If $\rho : R[\MM_d] \rightarrow R[\MM_n]$ is a $\hat{G}$-linear representation of $\MM_n$, there is an associated natural transformation $\tilde{\rho}: \MM_n \rightarrow \MM_d$ defined by $\tilde{\rho}_A(f) = f\circ \rho$ for each $f : R[\MM_n] \rightarrow A$. Let $\iota : R \rightarrow A$ denote the structure map of $A$. Suppose that $\iota$ is injective and that $f(x_{ij}) \in \iota(\hat{G})$ for all $1\le i,j \le n$. Then $f$ can be written as $f = \iota \circ \tilde{f}$, where $\tilde{f} : R[\MM_n] \rightarrow R$ is an $R$-algebra homomorphism satisfying $\tilde{f}(x_{ij}) \in \hat{G}$ for all $1 \le i,j \le n$. But then $[\tilde{f} \circ \rho](x_{kl}) \in \hat{G}$ for all $1 \le k,l \le d$, and hence $[f\circ \rho](x_{kl}) \in \iota(\hat{G})$ for all $1 \le k,l \le d$. In other words, $\tilde{\rho}_A$ carries the subset $I_n(\iota(\hat{G})) \subset \MM_n(A)$ into the subset $I_d(\iota(\hat{G})) \subset \MM_d(A)$. In particular, $\tilde{\rho}$ defines a $d$-dimensional $\hat{G}$-linear representation of $I_n(\iota(\hat{G}))$ on the set $\{\mathbf{0}\} \cup (\iota(G)\cdot\mathbf{e}_1) \cup\cdots \cup (\iota(G)\cdot\mathbf{e}_d) \subseteq A^d$. Here, $\mathbf{e}_j$ denotes the usual column vector with $1_A$ in the $j^{th}$ entry and $0_A$ in the others. 

For any positive integer $d$, we have a functor $\Aff^d: R$-$\operatorname{CAlg}\rightarrow \operatorname{Group}$ that carries $A$ to the additive group on $A^d$. Its representing Hopf algebra is $R[\Aff^d]:= R[x_1,\ldots , x_d]$, with each $x_i$ primitive. Of course, this is just the base extension of usual affine $d$-space to $R$. Matrix multiplication yields a natural transformation 
\[ 
\mu_d: \MM_d\times \Aff^d \rightarrow \Aff^d
\]  
\begin{mydef} 
Given any vector $\mathbf{v} = \left[\begin{array}{c} v_1 \\ v_2 \\ \vdots \\ v_n \end{array}\right] \in R^d$, we have an associated $R$-algebra morphism, also denoted $\mathbf{v}$:
\[ 
\mathbf{v}: R[\Aff^d] \rightarrow R
\] 
defined by $\mathbf{v}(x_i) = v_i$ for all $1\le i \le d$. With this identification, the set  
\[
V_d:= \{\mathbf{0}\} \cup \{ g\mathbf{e}_i \mid g \in G, 1 \le i \le d\} \subset \Aff^d(R) 
\]
 is a $\hat{G}$-linear vector space of dimension $d$ under the obvious $G$-action. 
\end{mydef}
\begin{mydef} 
Let $\rho : R[\MM_a] \rightarrow R[\MM_n]$ and $\phi : R[\MM_b]\rightarrow R[\MM_n]$ be two $\hat{G}$-linear representations of $\MM_n$. Let $f : \Aff^a \rightarrow \Aff^b$ be a natural transformation with dual map $f^* : R[\Aff^b] \rightarrow R[\Aff^a]$. Then $f$ is called a \emph{morphism from $\rho$ to $\phi$} if it satisfies the following: 
\begin{enumerate} 
\item The following diagram commutes:
\[
\begin{tikzcd}  
\MM_n\times \Aff^a \arrow{r}{\tilde{\rho}\times \id} \arrow[swap]{d}{\id\times f} & \MM_a \times \Aff^a \arrow{r}{\mu_a} & \Aff^a \arrow{d}{f} \\ 
\MM_n\times \Aff^b \arrow[swap]{r}{\tilde{\phi}\times \id}  & \MM_b\times \Aff^b \arrow[swap]{r}{\mu_b} & \Aff^b\\
\end{tikzcd} 
\]  
\item The map $f_{R}: \Aff^a(R) \rightarrow \Aff^b(R)$, $v \mapsto v\circ f^*$ induces a $\hat{G}$-linear map $V_a \rightarrow V_b$.
\end{enumerate}
\end{mydef}   

With this, the set of $\hat{G}$-linear representations of $\MM_n$ becomes a category $\Rep(\MM_n,\hat{G})$. Note that $\Rep(\MM_n,\hat{G})$ always has at least one $\hat{G}$-linear representation of dimension $n$, given by the identity map $R[\MM_n] \rightarrow R[\MM_n]$. This corresponds to the usual action of $I_n(\hat{G})$ on $\hat{G}\otimes_{\FF_1}[n]$. To find more examples, let $N$ be a subgroup of $G^n\rtimes S_n$ of index $d$. For $a \in G^n\rtimes S_n$, write $\bar{a} := aN$. We then consider the affine scheme $\MM_{(G^n\rtimes S_n)/N} \cong \MM_d$ of submonomial matrices indexed by the elements of $(G^d\rtimes S_n)/N$. 

\begin{mythm}\label{t.newexamples}
Suppose that $H$ is a normal subgroup of $S_n$. Then there is a $\hat{G}$-linear representation  
\[ 
\varphi_H : R[\MM_{(G^n\rtimes S_n)/(G^n\rtimes H)}] \rightarrow R[\MM_n] 
\]
of $\MM_n$ of dimension $[S_n:H]$ satisfying 
\[ 
\displaystyle \varphi_H(x_{\bar{a} \bar{b}}) = \sum_{\sigma \in P\left(\overline{ab^{-1}}\right)}{x_{\sigma}}
\] 
for all $\bar{a}, \bar{b} \in (G^n\rtimes S_n)/(G^d\rtimes H)$, where $P\left(\overline{ab^{-1}}\right) := \bigg\{ \sigma \in S_n \mid G^n\times \{\sigma\} \subseteq \overline{ab^{-1}} \bigg\}$ and $\displaystyle x_{\sigma}:= \prod_{k=1}^n{x_{\sigma(k) k}}$ for any $\sigma \in S_n$.
\end{mythm}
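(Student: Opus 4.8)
The plan is to define $\varphi_H$ directly on the polynomial generators of $R[\MM_{(G^n\rtimes S_n)/(G^n\rtimes H)}]$ by the stated formula and then verify, in turn, that it is well-defined, kills the submonomial relations, is a bialgebra map, and satisfies the $\hat G$-linearity condition. Throughout write $\pi : G^n\rtimes S_n \to S_n$ for the projection. Since $H\trianglelefteq S_n$, the subgroup $G^n\rtimes H$ is normal in $G^n\rtimes S_n$, and $Q:=(G^n\rtimes S_n)/(G^n\rtimes H)\cong S_n/H$ is a group of order $[S_n:H]$, which gives the asserted dimension $d=[S_n:H]$. The first observation is that $\overline{ab^{-1}}$, being a coset of $G^n\rtimes H$, equals $G^n\times \rho H$ with $\rho=\pi(ab^{-1})$, so $P(\overline{ab^{-1}})=\rho H$ is exactly the coset of $H$ in $S_n$ corresponding to $\bar a\bar b^{-1}\in Q$. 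In particular it depends only on $\bar a,\bar b\in Q$, so $\varphi_H(x_{\bar a\bar b})=\sum_{\sigma\in P(\overline{ab^{-1}})}x_\sigma$ is well-defined on generators.

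All the computations rest on one elementary fact in $R[\MM_n]$: for $\sigma\in S_n$ the monomial $x_\sigma=\prod_k x_{\sigma(k)k}$ is nonzero, since no two of its factors share a row or a column, whereas for $\sigma\neq\tau$ one has $x_\sigma x_\tau=0$, because some column $k$ satisfies $\sigma(k)\neq\tau(k)$ and then $x_{\sigma(k)k}x_{\tau(k)k}$ lies in the submonomial ideal. I will also record the coalgebra identities $\epsilon(x_\sigma)=\delta_{\sigma,\id}$ and, by expanding $\Delta(x_\sigma)=\prod_k\sum_l x_{\sigma(k)l}\otimes x_{lk}$ and discarding the terms killed by the relations, $\Delta(x_\sigma)=\sum_{\nu\in S_n}x_{\sigma\nu^{-1}}\otimes x_\nu$. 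To check that $\varphi_H$ respects the column relations, note that $\varphi_H(x_{\bar a\bar c})\varphi_H(x_{\bar b\bar c})=\sum_{\sigma\in P(\overline{ac^{-1}})\cap P(\overline{bc^{-1}})}x_\sigma^2$ by the fact above. Now $P(\overline{ac^{-1}})$ and $P(\overline{bc^{-1}})$ are the $H$-cosets of $\pi(a)\pi(c)^{-1}$ and $\pi(b)\pi(c)^{-1}$; normality of $H$ shows these coincide iff $\pi(a)^{-1}\pi(b)\in H$, i.e. iff $\bar a=\bar b$, and are otherwise disjoint. Hence the product vanishes whenever $\bar a\neq\bar b$, and the row relations are handled identically.

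It remains to check compatibility with $\epsilon$ and $\Delta$. For the counit, $\epsilon(\varphi_H(x_{\bar a\bar b}))=\sum_{\sigma\in P(\overline{ab^{-1}})}\delta_{\sigma,\id}$, which equals $1$ exactly when $\id\in\rho H$, i.e. when $\bar a=\bar b$, matching $\epsilon(x_{\bar a\bar b})$. For comultiplication, using $\Delta(x_\mu)=\sum_\nu x_{\mu\nu^{-1}}\otimes x_\nu$ and setting $\sigma=\mu\nu^{-1}$, $\tau=\nu$, the left-hand side $\Delta(\varphi_H(x_{\bar a\bar b}))$ becomes $\sum_{\sigma\tau\in P(\overline{ab^{-1}})}x_\sigma\otimes x_\tau$. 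On the other hand $(\varphi_H\otimes\varphi_H)\Delta(x_{\bar a\bar b})=\sum_{\bar c\in Q}\varphi_H(x_{\bar a\bar c})\otimes\varphi_H(x_{\bar c\bar b})$, and for fixed $\sigma,\tau$ there is exactly one $\bar c$ (namely $\bar c=\bar\tau\bar b$, equivalently $\bar\sigma^{-1}\bar a$) with $\bar\sigma=\bar a\bar c^{-1}$ and $\bar\tau=\bar c\bar b^{-1}$, and such a $\bar c$ exists iff $\bar\sigma\bar\tau=\bar a\bar b^{-1}$ in $Q$, i.e. iff $\sigma\tau\in P(\overline{ab^{-1}})$. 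Thus both sides agree. This step is the crux of the argument and the main obstacle: it is precisely here that the group structure on $Q\cong S_n/H$, and hence the normality of $H$, is indispensable, since matching the convolution $\sum_{\bar c}$ against the single condition $\sigma\tau\in\bar a\bar b^{-1}$ uses that $\bar\sigma\mapsto\bar\sigma\bar\tau$ is quotient-group multiplication.

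Finally, for the $\hat G$-linearity condition, let $m:R[\MM_n]\to R$ have all $m(x_{ij})\in\hat G$. Because the nontrivial elements of $G$ are units in $R=\ZZ[G]$, the relations force $A:=(m(x_{ij}))$ to be a submonomial matrix, i.e. $A\in I_n(\hat G)$, and $[m\circ\varphi_H](x_{\bar a\bar b})=\sum_{\sigma\in P(\overline{ab^{-1}})}\prod_k m(x_{\sigma(k)k})$. If $A$ has a zero column then every summand vanishes; otherwise $A$ is a monomial matrix with underlying permutation $\pi_A$, and $\prod_k m(x_{\sigma(k)k})\neq0$ forces $\sigma=\pi_A$, so at most one summand survives. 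In either case the value lies in $\hat G$, and $\varphi_H$ is a $\hat G$-linear representation of $\MM_n$ of dimension $[S_n:H]$.
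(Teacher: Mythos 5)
Your proof is correct and follows essentially the same route as the paper's: define $\varphi_H$ on generators, kill the submonomial relations via disjointness of the cosets $P\left(\overline{ab^{-1}}\right)$, match $\Delta(\varphi_H(x_{\bar{a}\bar{b}}))$ against $(\varphi_H\otimes\varphi_H)\Delta(x_{\bar{a}\bar{b}})$ using the identity $\Delta(x_\sigma)=\sum_{\nu}x_{\sigma\nu^{-1}}\otimes x_\nu$ together with the uniqueness of the intermediate coset $\bar{c}$, and establish $\hat{G}$-linearity by showing that at most one permutation $\sigma$ survives evaluation by $m$ (since nonzero entries are units in $\ZZ[G]$). Your explicit check of the counit condition is a minor point of extra thoroughness that the paper glosses over, but otherwise the two arguments coincide step for step.
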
 

\begin{proof} 
Since $G^n\rtimes H$ is normal in $G^n\rtimes S_n$ and $(c,\sigma ) \in G^n\rtimes H$ implies $G^n\times\{\sigma\} \subseteq G^n\rtimes H$, the formula for $\varphi_H$ is well-defined on the elements $x_{\bar{a}\bar{b}}$. Note that if $\bar{b} \neq \bar{c}$, then $\overline{ab^{-1}} \cap \overline{ac^{-1}} = \emptyset$. Thus, for any $\sigma \in P\left(\overline{ab^{-1}}\right)$ and $\tau \in P\left(\overline{ac^{-1}}\right)$, there exists a $k$ for which $\sigma(k) \neq \tau(k)$. Then $x_{\sigma} x_{\tau} = 0$, which implies that $\varphi_H(x_{\bar{a}\bar{b}})\varphi_H(x_{\bar{a}\bar{c}}) = 0$. Similarly $\varphi_H(x_{\bar{b}\bar{a}})\varphi_H(x_{\bar{c}\bar{a}}) = 0$, and so $\varphi_H$ extends uniquely to an $R$-algebra map $R[\MM_{(G^n\rtimes S_n)/(G^n\rtimes H)}] \rightarrow R[\MM_n]$. To prove it is a bialgebra map, it suffices to show that $\varphi_H$ preserves comultiplication on the generators. On the one hand, 
\begin{align*} 
\displaystyle \Delta (\varphi_H(x_{\bar{a}\bar{b}})) & = \Delta\left( \sum_{\sigma \in P\left(\overline{ab^{-1}}\right)}{x_{\sigma}} \right) \\ 
& =  \sum_{\sigma \in P\left(\overline{ab^{-1}}\right)}{\Delta(x_{\sigma})}\\ 
& = \sum_{\sigma \in P\left(\overline{ab^{-1}}\right)}{\prod_{k=1}^n{\Delta(x_{\sigma(k)k})}} \\ 
& = \sum_{\sigma \in P\left(\overline{ab^{-1}}\right)}{\prod_{k=1}^n{\left( \sum_{i_k}{x_{\sigma(k)i_k}\otimes x_{i_k k}} \right)}} \\ 
& = \sum_{\sigma \in P\left(\overline{ab^{-1}}\right)}{\sum_{i_1,\ldots i_n}{x_{\sigma(1)i_1}\cdots x_{\sigma (n)i_n} \otimes x_{i_1 1}\cdots x_{i_n n}}}
\end{align*} 
where the $i_1,\ldots , i_n$ are a permutation of $1,\ldots , n$. In other words, this can be written as  
\begin{align*} 
\displaystyle \sum_{\sigma \in P\left(\overline{ab^{-1}}\right)}{\sum_{i_1,\ldots i_n}{x_{\sigma(1)i_1}\cdots x_{\sigma (n)i_n} \otimes x_{i_1 1}\cdots x_{i_n n}}} & = \sum_{\sigma \in P\left(\overline{ab^{-1}}\right)}{\sum_{\tau \in S_n}{x_{\sigma \tau^{-1}}\otimes x_{\tau}}}\\ 
& = \sum_{\alpha \beta \in P\left(\overline{ab^{-1}}\right)}{x_{\alpha}\otimes x_{\beta}},
\end{align*} 
where the last sum is taken over all $\alpha, \beta \in S_n$ with $\alpha\beta \in P\left(\overline{ab^{-1}}\right)$. On the other hand, 
\begin{align*} 
\displaystyle [\varphi_H\otimes \varphi_H]\left( \Delta (x_{\bar{a}\bar{b}}) \right) & = [\varphi_H\otimes \varphi_H]\left(\sum_{\bar{c}}{x_{\bar{a}\bar{c}}\otimes x_{\bar{c}\bar{b}}} \right) \\ 
& = \sum_{\bar{c}}{\varphi_H(x_{\bar{a}\bar{c}})\otimes \varphi_H(x_{\bar{c}\bar{b}})} \\  
& = \sum_{\bar{c}}{\left( \sum_{\sigma \in P\left(\overline{ac^{-1}}\right)}{x_{\sigma}} \right) \otimes \left( \sum_{\tau \in P\left(\overline{cb^{-1}}\right)}{x_{\tau}} \right)} \\ 
& = \sum_{\bar{c}, \sigma, \tau}{x_{\sigma}\otimes x_{\tau}} \\ 
& = \sum_{\alpha \beta \in P\left(\overline{ab^{-1}}\right)}{x_{\alpha}\otimes x_{\beta}} \\ 
& = \Delta\left( \varphi_H(x_{\bar{a}\bar{b}}) \right).
\end{align*} 
To show that $\varphi_H$ is a $\hat{G}$-linear representation, take any $R$-algebra map $m : R [\MM_n]\rightarrow R$ satisfying $m(x_{ij}) \in \hat{G}$ for all $1 \le i,j \le n$. Then  
\begin{align*} 
m\left( \varphi_H ( x_{\bar{a}\bar{b}}) \right) & = m\left( \sum_{\sigma \in P\left(\overline{ab^{-1}}\right) }{x_{\sigma}} \right) \\ 
& = \sum_{\sigma \in P\left(\overline{ab^{-1}}\right) }{m(x_{\sigma})} \\ 
\end{align*} 
Now, $\displaystyle m(x_{\sigma}) = \prod_{k=1}^n{m(x_{\sigma(k)k})} \in \hat{G}$ for all $\sigma \in S_n$, and $m(x_{\sigma}) \neq 0$ if and only if $m(x_{\sigma(k)k}) \neq 0$ for all $1 \le k \le n$. Suppose such a $\sigma$ exists, and pick any $\tau \in S_n$ with $\sigma \neq \tau$. Then there exists a $k$ satisfying $\sigma (k) \neq \tau(k)$, and so $0 = m(0) = m(x_{\sigma(k)k}x_{\tau(k)k}) = m(x_{\sigma(k)k})m(x_{\tau(k)k})$, which implies $m(x_{\tau}) = 0$ since $m(x_{\sigma(k)k}) \in G$ is a unit. In other words, there is at most one $\sigma \in S_n$ for which $m(x_{\sigma}) \neq 0$. It thus follows that $[m\circ \varphi_H](x_{\bar{a}\bar{b}}) \in \hat{G}$ for all $\bar{a}, \bar{b} \in S_n/H$.
\end{proof}  

For any finite sets $S$ and $T$, there is a monic natural transformation $\MM_S\times \MM_T \rightarrow \MM_{S\sqcup T}$ corresponding to the block-diagonal embedding $(A,B) \mapsto \left(\begin{array}{cc} A & 0 \\ 0 & B \end{array} \right)$. Thus, if $\rho_i : R[\MM_{d_i}] \rightarrow R[\MM_n]$ are $\hat{G}$-representations of $\MM_n$ for $i=1,2$ then we have a natural transformation 
\[ 
\MM_n \rightarrow \MM_{d_1}\times \MM_{d_2} \hookrightarrow \MM_{d_1+d_2}
\]
whose dual map 
\[ 
R[\MM_{d_1+d_2}]\rightarrow R[\MM_{d_1}]\otimes_R R[\MM_{d_2}] \rightarrow R[\MM_n]
\] 
is easily checked to be a $\hat{G}$-linear representation of $\MM_n$. This corresponds to the \emph{direct sum} $\rho_1\oplus \rho_2$. In particular, $\Rep(\MM_n,\hat{G})$ has infinitely-many non-isomorphic objects.

\subsection{Functors on Abelian Groups}  

Recall that $\operatorname{Mon}_0$ denotes the category of monoids with absorbing elements, whose morphisms are monoid homomorphisms preserving absorbing elements. We let $\mon_0$ denote the full subcategory of finite monoids with absorbing elements, and $\hat{G}$-$\mon_0$ the subcategory of $\mon_0$ whose objects are $\hat{G}$-linear finite monoids, with $f : M \rightarrow N$ a morphism in $\hat{G}$-$\mon_0$ if and only if $f$ is a morphism in $\mon_0$ and $f(gx) = gf(x)$ for all $g \in G$ and $x \in M$. Then $\hat{G}$-$\mon_0$ contains the full subcategory $\hat{G}$-$\ab_0$ whose objects are finite pointed abelian groups containing $\hat{G}$. 

Assume $\hat{G}$ is fixed. Then the assignment $\hat{H} \mapsto I_n(\hat{H})$ defines a functor $\mfI_n(\hat{G}): \hat{G}$-$\ab_0 \rightarrow \hat{G}$-$\mon_0$. A natural transformation $\rho: \mfI_n(\hat{G}) \rightarrow \mfI_d(\hat{G})$ is equivalent to specifying, for each $\hat{H} \in \hat{G}$-$\ab_0$, a morphism
\[ 
\rho_H: I_n(\hat{H}) \rightarrow I_d(\hat{H})
\]  
in $\hat{G}$-$\mon_0$ such that for any morphism $\hat{H} \xrightarrow[]{\phi} \hat{K}$ in $\hat{G}$-$\ab_0$, we have a commutative diagram   

\[\begin{tikzcd}
I_n(\hat{H}) \arrow{r}{\rho_H} \arrow[swap]{d}{I_n(\phi)} & I_d(\hat{H}) \arrow{d}{I_d(\phi)} \\ 
I_n(\hat{K}) \arrow[swap]{r}{\rho_K} & I_d(\hat{K}). \\
\end{tikzcd}
\] 

\begin{mydef} 
A \emph{$\hat{G}$-linear representation} of $\mfI_n(\hat{G})$ of dimension $d$ is a natural transformation  
\[
\rho: \mfI_n(\hat{G}) \rightarrow \mfI_d(\hat{G})
\]
\end{mydef}   

\begin{mythm}\label{t.FunctorialRep}
Set $R = \ZZ[G]$ and $\MM_n = \MM_n(\hat{G})$. Suppose that $\rho : R[\MM_d] \rightarrow R[\MM_n]$ is a $\hat{G}$-linear representation of $\MM_n$ and that for all $1 \le k,l \le d$, $\rho(x_{kl})$ is of the following form: there exists a collection of injective functions $F(k,l)$ whose domains and ranges are subsets of $[n]$, satisfying\footnote{ Of course, the $\hat{G}$-representations described in Theorem \ref{t.newexamples} are of this form.}
\begin{enumerate}
\item Either $|F(k,l)|=1$ or $F(k,l) \subseteq S_n$  
\item $\displaystyle \rho\left(x_{kl} \right) = \sum_{f \in F(k,l)}{\lambda_fx_f}$, where if $f : S \rightarrow [n]$ then $\displaystyle x_f:=\prod_{i \in S}{x_{f(i)i}}$ and $\lambda_f \in G$. 
\end{enumerate}  
Then $\rho$ induces a $\hat{G}$-linear representation of $\mfI_n(\hat{G})$ of degree $d$.
\end{mythm}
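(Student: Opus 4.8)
The plan is to build the component $\rho_{H}$ of the natural transformation directly from the bialgebra map $\rho$ by evaluation, and then to check that it is a well-defined morphism in $\hat{G}$-$\mon_0$ that is natural in $\hat{H}$. Fix $\hat{H}\in\hat{G}$-$\ab_0$, so $G\le H$, and regard $\hat{H}$ as a multiplicative submonoid-with-zero of the group ring $\ZZ[H]$, which is an $R$-algebra via the inclusion $R=\ZZ[G]\hookrightarrow\ZZ[H]$ coming from $G\le H$. Each $A=(a_{ij})\in I_n(\hat{H})$ then determines an $R$-algebra homomorphism $m_A:R[\MM_n]\to\ZZ[H]$ sending $x_{ij}\mapsto a_{ij}$; this is well defined precisely because $A$ is submonomial, so that $a_{ik}a_{jk}=0=a_{ki}a_{kj}$ in $\ZZ[H]$ for $i\neq j$. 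I would define $\rho_H(A)$ to be the $d\times d$ matrix whose $(k,l)$-entry is $[m_A\circ\rho](x_{kl})$; in other words $\rho_H$ is $\tilde\rho_{\ZZ[H]}$ restricted to $I_n(\hat{H})\subseteq\MM_n(\ZZ[H])$.

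The main obstacle, and the only step that uses hypotheses (1)--(2), is verifying that $\rho_H(A)$ actually lands in $I_d(\hat{H})$. Writing $\rho(x_{kl})=\sum_{f\in F(k,l)}\lambda_f x_f$, I would evaluate $m_A(x_f)=\lambda_f\prod_{i\in S}a_{f(i)i}$ and argue that at most one summand survives. In the singleton case this is immediate, and the surviving value is a product of elements of $\hat{H}$, hence in $\hat{H}$. In the case $F(k,l)\subseteq S_n$, any two distinct $\sigma,\tau$ disagree at some column $j_0$; since column $j_0$ of the submonomial matrix $A$ has at most one nonzero entry while $\sigma(j_0)\neq\tau(j_0)$, at most one of $m_A(x_\sigma),m_A(x_\tau)$ is nonzero. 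Thus a single term survives, again with value in $\hat{H}$. This is exactly the disjointness argument at the end of the proof of Theorem \ref{t.newexamples}. Consequently every entry of $\rho_H(A)$ lies in $\hat{H}$, and since $m_A\circ\rho$ is a point of $\MM_d(\ZZ[H])$ its entries obey the submonomial relations; as two nonzero elements of $\hat{H}$ in a common row or column would have nonzero product in $\ZZ[H]$, the matrix $\rho_H(A)$ is submonomial, i.e. $\rho_H(A)\in I_d(\hat{H})$.

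It then remains to record the formal properties. Because $\rho$ is an $R$-bialgebra map, $\tilde\rho$ is a natural transformation of $\Mon_0$-valued functors: compatibility with comultiplication makes each $\tilde\rho_{\ZZ[H]}$ a monoid homomorphism, compatibility with the counit sends the identity matrix to the identity matrix, and (as every monomial $x_f$ occurring has positive degree) the zero matrix maps to the zero matrix. Restricting along $I_n(\hat{H})\hookrightarrow\MM_n(\ZZ[H])$ shows $\rho_H:I_n(\hat{H})\to I_d(\hat{H})$ is a morphism in $\mon_0$. Its $\hat{G}$-linearity amounts, via the monoid-homomorphism property, to the single identity $\rho_H(gI_n)=gI_d$, i.e. to $\tilde\rho$ carrying the scalar point $gI_n$ to the scalar point $gI_d$; I would confirm this by evaluating the explicit expression for $\rho(x_{kl})$ at $gI_n$.

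Finally, for naturality in $\hat{H}$, a morphism $\phi:\hat{H}\to\hat{K}$ in $\hat{G}$-$\ab_0$ restricts to a group homomorphism $H\to K$ that fixes $G$ pointwise (from $\phi(g)=g\phi(1)=g$), hence extends to an $R$-algebra map $\Phi=\ZZ[\phi]:\ZZ[H]\to\ZZ[K]$ with $\Phi|_{\hat{H}}=\phi$ and $m_{\mfI_n(\phi)(A)}=\Phi\circ m_A$. Since $\mfI_n(\phi)$ acts entrywise by $\phi$, I then get $\rho_K(\mfI_n(\phi)(A))_{kl}=\Phi\bigl([m_A\circ\rho](x_{kl})\bigr)=\phi\bigl(\rho_H(A)_{kl}\bigr)=\mfI_d(\phi)(\rho_H(A))_{kl}$, which is precisely commutativity of the naturality square $\rho_K\circ\mfI_n(\phi)=\mfI_d(\phi)\circ\rho_H$. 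I expect the landing-in-$I_d(\hat{H})$ step to be the crux; the monoid-homomorphism, unit and zero preservation, $\hat{G}$-linearity, and naturality are then formal consequences of the bialgebra axioms and the functoriality of the group-ring construction.
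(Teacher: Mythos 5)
Your proposal follows the paper's proof essentially step for step: you define $\rho_H$ by evaluating points $m_A$ against $\rho$ (the paper phrases this via extension of scalars $R[\MM_n]_H = \ZZ[H]\otimes_R R[\MM_n]$, which amounts to the same thing), you use conditions (1)--(2) together with the defining relations of $R[\MM_n]$ to show that at most one monomial survives evaluation so that the entries of $\rho_H(A)$ land in $\hat{H}$, you derive multiplicativity and preservation of the identity from the bialgebra axioms, and you obtain naturality from functoriality of the group-ring construction, exactly as the paper does. Your treatment of the submonomial property of $\rho_H(A)$ and of preservation of the zero matrix is in fact more careful than the paper's, which leaves both points implicit.

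However, the one step you defer is precisely the step that cannot be completed, and it is also the step the paper asserts without computation. Your reduction of $G$-equivariance to the single identity $\rho_H(gI_n)=gI_d$ is a valid reduction, but the promised verification fails: the monomials $x_f$ are multilinear of degree $|S_f|$, so evaluating at $gI_n$ gives $\rho_H(gI_n)_{kk} = \lambda_{f_0}\,g^{|S_{f_0}|} = g^{|S_{f_0}|}$, where $f_0$ is the unique $f \in F(k,k)$ with $f = \id_{S_f}$ (its coefficient is $1$ by counit preservation); this equals $g$ only when $g^{|S_{f_0}|-1}=1$. Concretely, take $G = \ZZ/2\ZZ$ with nontrivial element $\varepsilon$, $n=2$, $d=1$, and the permanent representation $\rho(x_{11}) = x_{11}x_{22}+x_{12}x_{21}$, which is $\varphi_{S_2}$ from Theorem \ref{t.newexamples} and hence satisfies the hypotheses of Theorem \ref{t.FunctorialRep}. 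Then $\rho_H(A) = a_{11}a_{22}+a_{12}a_{21}$, so $\rho_H(\varepsilon A) = \varepsilon^2\rho_H(A) = \rho_H(A) \neq \varepsilon\rho_H(A)$ whenever $\rho_H(A)\neq 0$; thus $\rho_H$ is not a morphism in $\hat{G}$-$\mon_0$, and no argument can make it one. The paper's own proof glosses over exactly this point, asserting $[(gm)\circ\rho_H](x_{kl}) = g[m\circ\rho_H](x_{kl})$ with no justification, so your proposal reproduces the paper's argument faithfully, defect included; but as written both arguments only go through when $G$ is trivial (the $\FF_1$ case, where equivariance is vacuous) or under an additional hypothesis such as $\exp(G)$ dividing $|S_f|-1$ for every $f$ occurring in some $F(k,l)$. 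Everything else in your write-up is sound.
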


\begin{proof}
 For all $\hat{H} \in \hat{G}$-$\ab_0$, set $R[\MM_n]_H = \ZZ[H]\otimes_{R}R[\MM_n]$. Note that $R[\MM_n]_H$ is the bialgebra representing $\MM(\hat{H})$. For each such $H$, $\rho$ induces a $\ZZ[H]$-algebra homomorphism 
\[ 
\rho_H : R[\MM_d]_H \rightarrow R[\MM_n]_H
\] 

via extension of scalars. Note that for any two $\hat{H},\hat{J} \in \hat{G}$-$\ab_0$, condition (2) implies that $\rho_H(x_{ij})$ and $\rho_J(x_{ij})$ may naturally be identified as elements of the common subring $R[\MM_n]$ of $R[\MM_n]_H$ and $R[\MM_n]_J$. In other words, $\rho_H(x_{ij}) = \rho_J(x_{ij})$ for all $i$ and $j$ as elements of $R[\MM_n]$. Suppose that $m : R[\MM_n]_H \rightarrow \ZZ[H]$ is a $R$-algebra homomorphism which satisfies $m(x_{kl}) \in \hat{H}$ for all $k$ and $l$. Then $\displaystyle  m(x_f) = \prod_{i \in S}{m(x_{f(i)i})} \in \hat{H}$ for any $f$, and condition (1) above ensures that $m(x_f) \neq 0$ for at most one $f \in F(k,l)$. Thus, condition (2) ensures that $[m\circ \rho_H](x_{kl}) \in \hat{H}$ for all $1\le k,l \le d$.  In other words, $\rho_H$ induces a monoid homomorphism $\tilde{\rho}_H: I_n(\hat{H}) \rightarrow I_d(\hat{H})$. We claim that $\tilde{\rho}_H$ is a morphism in $\hat{G}$-$\mon_0$. Indeed, suppose that $m : R[\MM_n]_H \rightarrow \ZZ[H]$ is a $\ZZ[H]$-algebra map corresponding to the matrix $A \in I_n(\hat{H})$ and that $g \in G$. Let $gm$ denote the $\ZZ[H]$-algebra map defined by $x_{ij} \mapsto gm(x_{ij})$ for all $1 \le i,j \le n$, which corresponds to $gA \in I_n(\hat{H})$.  Then for each $x_{kl}$, $\tilde{\rho}_H(gm)(x_{kl}) = [(gm)\circ \rho_H](x_{kl}) = g[m\circ \rho_H](x_{kl}) = g\tilde{\rho}_H(m)(x_{kl})$. In other words, $\tilde{\rho}_H(gA) = g\tilde{\rho}_H(A)$. Suppose that $f : \hat{H} \rightarrow \hat{J}$ is a morphism in $\hat{G}$-$\ab_0$. Let $m : R[\MM_n]_H \rightarrow \ZZ[H]$ be an element of $I_n(\hat{H})$. Then $\tilde{\rho}_H(m) = m\circ \rho_H \in I_d(\hat{H})$. In fact, the $(i,j)$-entry of $m\circ \rho_H$ is precisely $m(\rho_H(x_{ij}))$. Thus, the $(i,j)$-entry of $I_d(f)(\tilde{\rho}_H(m))$ is precisely $f(m(\rho_H(x_{ij})))$. On the other hand, $I_n(f)(m) \in I_n(\hat{J})$ and thus $I_n(f)(m)\circ \rho_J = \tilde{\rho}_J(I_n(f)(m)) \in I_d(\hat{J})$. The $(i,j)$-entry of this matrix is $[I_n(f)(m)]\rho_J(x_{ij}) = f(m(\rho_J(x_{ij}))) = f(m(\rho_H(x_{ij})))$, where the last equality follows from the identification described above. In other words, we have a commutative square 

\[ 
\begin{tikzcd}
 I_n(\hat{H}) \arrow{r}{\tilde{\rho}_H}\arrow{d}[swap]{I_n(f)} & I_d(\hat{H})\arrow{d}{I_d(f)}\\
 I_n(\hat{J}) \arrow{r}[swap]{\tilde{\rho}_J}& I_d(\hat{J})  \\
\end{tikzcd} 
\]
and thus, a $\hat{G}$-linear representation $\tilde{\rho} : \mfI_n(\hat{G}) \rightarrow \mfI_d(\hat{G})$.  
\end{proof}

\begin{mydef}
In analogy to our bialgebra framework, a {\emph{morphism}} from $\rho^{(1)} : \mfI_n(\hat{G}) \rightarrow \mfI_{d_1}(\hat{G})$ to $\rho^{(2)} : \mfI_n(\hat{G})\rightarrow \mfI_{d_2}(\hat{G})$ is a collection of morphisms in $\hat{G}$-$\ab_0$  
\[
f = \{f_H : \hat{H}^{\oplus d_1} \rightarrow \hat{H}^{\oplus d_2} \mid  \hat{H} \in \hat{G}-\ab_0 \} 
\] 
satisfying the following: 
\begin{enumerate}
 \item All of the corresponding diagrams 
\[
\begin{tikzcd}  
I_n(\hat{H})\times \hat{H}^{\oplus d_1} \arrow{r}{\rho^{(1)}_{\hat{H}}\times \id} \arrow[swap]{d}{\id\times f_H} & I_{d_1}(\hat{H}) \times \hat{H}^{\oplus d_1} \arrow{r}{\mu_1} & \hat{H}^{\oplus d_1} \arrow{d}{f_H} \\ 
I_n(\hat{H})\times \hat{H}^{\oplus d_2} \arrow[swap]{r}{\rho^{(2)}_H \times \id}  & I_{d_2}(\hat{H})\times \hat{H}^{\oplus d_2} \arrow[swap]{r}{\mu_2} & \hat{H}^{\oplus d_2}\\
\end{tikzcd} 
\]   
commute, where $\mu_1$ and $\mu_2$ are the maps affording the usual $\hat{H}$-linear actions.  
\item The maps $f_H : \hat{H}^{\oplus d_1} \rightarrow \hat{H}^{\oplus d_2}$ restrict to $\hat{G}$-linear maps $\hat{G}^{\oplus d_1} \rightarrow \hat{G}^{\oplus d_2}$ for each $\hat{H}$.
\end{enumerate} 
\end{mydef}  

With this definition, the collection of all $\hat{G}$-linear representations of $\mfI_n(\hat{G})$ becomes a category, denoted $\Rep(\mfI_n(\hat{G}),\hat{G})$. If $f : \rho^{(1)} \rightarrow \rho^{(2)}$ is an isomorphism, then setting $\hat{H} = \hat{G}$ implies that $\rho^{(1)}_G \cong \rho^{(2)}_G$ as objects in $\Rep(I_n(\hat{G}),\hat{G})$. In particular, Theorem \ref{t.FunctorialRep} and the results of the preceding section imply that $\Rep(\mfI_n(\hat{G}),\hat{G})$ has infinitely-many non-isomorphic objects.

\bibliography{quiver}\bibliographystyle{alpha} 
\end{document}